\let\mathcal\mathscr
\DeclareMathOperator{\Pic}{Pic} 
\DeclareMathOperator{\Br}{Br} 
\DeclareMathOperator{\rank}{rank} 
\DeclareMathOperator{\chr}{char} 
\DeclareMathOperator{\adj}{adj} 
\DeclareMathOperator{\Gal}{Gal} 
\newtheorem{theorem}{Theorem}
\newtheorem{lemma}[theorem]{Lemma}
\newtheorem{proposition}[theorem]{Proposition}
\newtheorem{corollary}[theorem]{Corollary}
\theoremstyle{definition}
\newtheorem{definition}[theorem]{Definition}
\newtheorem{remark}[theorem]{Remark}
\newtheorem*{notation}{Notation}
\numberwithin{theorem}{section}
\numberwithin{equation}{section}
\numberwithin{table}{section}
\DeclareSymbolFont{bbold}{U}{bbold}{m}{n}
\DeclareSymbolFontAlphabet{\mathbbold}{bbold}
\renewcommand{\P}{\mathbb{P}}
\newcommand{\Q}{\mathbb{Q}}
\newcommand{\F}{\mathbb{F}}
\newcommand{\N}{\mathbb{N}}
\newcommand{\R}{\mathbb{R}}
\renewcommand{\l}{\left}
\renewcommand{\r}{\right}
\renewcommand{\c}{\mathcal}
\renewcommand{\epsilon}{\varepsilon}
\renewcommand{\leq}{\leqslant}
\renewcommand{\geq}{\geqslant}
\renewcommand{\#}{\sharp}
\renewcommand{\gg}{\ggg}
\renewcommand{\ll}{\lll}
\newcommand\vz{\mathbf{0}}
\newcommand\E{\mathbf{E}}
\newcommand\FF{\mathbb{F}}
\newcommand\PP{\mathbb{P}}
\newcommand\ZZ{\mathbb{Z}}
\newcommand\QQ{\mathbb{Q}}
\newcommand\RR{\mathbb{R}}
\newcommand\CC{\mathbb{C}}
\newcommand\GG{\mathbb{G}}
\newcommand\Ga{\GG_\mathrm{a}}
\newcommand\Gm{\GG_\mathrm{m}}
\newcommand{\Kbar}{\overline{K}}
\newcommand{\OO}{\mathcal{O}}
\newcommand{\places}{\Omega_K}
\newcommand{\archplaces}{{\Omega_\infty}}
\newcommand{\vv}{\mathbf{v}}
\newcommand{\vq}{\mathbf{q}}
\newcommand{\vx}{\mathbf{x}}
\newcommand{\vy}{\mathbf{y}}
\newcommand{\ideals}{\mathcal{I}_K}
\newcommand{\id}[1]{\mathfrak{#1}}
\newcommand{\ppp}{\id{p}}
\newcommand{\PPP}{\id{P}}
\newcommand{\aaa}{\id{a}}
\newcommand{\bbb}{\id{b}}
\newcommand{\ccc}{\id{c}}
\newcommand{\ddd}{\id{d}}
\newcommand{\norm}{\mathfrak{N}}
\newcommand{\where}{\ :\ }
\newcommand{\classrep}{\mathfrak{r}}
\newcommand{\abs}[1]{\left|#1\right|}
\newcommand{\absv}[1]{\left|#1\right|_v}
\newcommand{\vecnorm}[1]{\left\lVert #1 \right\rVert}
\newcommand{\vecnormv}[1]{\left\lVert #1 \right\rVert_v}
\newcommand{\one}{{\mathbf{1}}}
\newcommand\qr[2]{\left(\frac{#1}{#2}\right)}
\newcommand\qrp[1]{\qr{#1}{\ppp}}
\newcommand\card{\#}
\newcommand\DD{\mathcal{D}}
\newcommand\www{\id{W}}
\newcommand{\locdegv}{{m_v}}
\newcommand\dg{m}
\newcommand\qfheight[1]{{\langle #1 \rangle}}
\newcommand{\bomega}{\boldsymbol{\omega}}
\newcommand{\vxi}{\boldsymbol{\xi}}
\newcommand{\nonarchplaces}{{\Omega_0}}
\newcommand{\locheightv}[1]{H_v(#1)}
\newcommand{\freeunits}{\mathcal{A}}
\DeclareMathOperator{\res}{Res}
\DeclareMathOperator{\vol}{vol}
\DeclareMathOperator\supp{supp}
\newcommand\WHERE{\,\Bigg|\,}
\begin{document}

\title[
Rational points of bounded height on general conic bundle surfaces
]
{
Rational points of bounded height on general conic bundle surfaces
}

\author{Christopher Frei}
\address{
University of Manchester \\
School of Mathematics\\
Oxford Road\\
Manchester\\
M13 9PL\\
UK
}
\email{christopher.frei@manchester.ac.uk}

\author{Daniel Loughran}
\address{
University of Manchester \\
School of Mathematics\\
Oxford Road\\
Manchester\\
M13 9PL\\
UK}
\email{daniel.loughran@manchester.ac.uk}

\author{Efthymios Sofos}
\address{
Max-Planck-Institut f\"{u}r Mathematik\\
Vivatsgasse 7\\
Bonn\\
53111\\
Germany
}
\email{e.sofos@math.leidenuniv.nl}

\begin{abstract}
A conjecture of Manin predicts the asymptotic distribution 
of rational points of bounded height on Fano varieties.  
In this paper we use conic bundles to obtain
correct lower bounds for a wide class of surfaces over number fields
for which the conjecture is still far from being proved.
For example, we obtain the conjectured lower bound of Manin's conjecture for any del Pezzo surface
whose Picard rank is sufficiently large, or for arbitrary del Pezzo surfaces
after possibly an extension of the ground field of small degree.
\end{abstract}

\subjclass[2010]{11D45 
 	(14G05, 
 	 11G35, 
  	11N37) 
 }

\maketitle

\setcounter{tocdepth}{1}
\tableofcontents

\section{Introduction}\label{intro}

\subsection{Manin's conjecture}
Manin's conjecture, first posed in \cite{fmt} and developed further in \cite{BM90} and \cite{peyre}, predicts
precise asymptotic behaviour for the number of rational points of bounded height on Fano varieties
and similar varieties. Recall that a \emph{Fano variety} over a number field $K$ is a smooth projective variety $X$ over $K$
with ample anticanonical divisor $-K_X$. The theory of height functions gives rise
to a choice of anticanonical height $H$ on $X$, which has the property that the cardinality
\[
N_{U,H}(B) := \card\{x \in U(K) \where H(x) \leq B\}
\]
is finite for all open subsets $U \subset X$ and all $B>0$. If $X(K) \neq \emptyset$, then 
Manin's conjecture predicts the existence of an open subset $U \subset X$ and a positive constant $C_{X,H}$ such that
\begin{equation} \label{conj:Manin}
N_{U,H}(B) \sim C_{X,H}B(\log B)^{\rho(X)-1}, \quad \mbox{as } B \to \infty,
\end{equation}
where $\rho(X)$ is the rank of the Picard group of $X$. One needs to restrict
to an open subset in general to avoid so-called \emph{accumulating subvarieties} that
contain more than the expected number of rational points (e.g.~lines on cubic surfaces).

Whilst there are counterexamples to Manin's conjecture \cite{BT96}, 
it is nonetheless expected to hold in dimension $2$.

\subsection{Del Pezzo surfaces}
\label{sec:del pezzo}
A Fano variety $X$ of dimension $2$ is called a \emph{del Pezzo surface}. 
An important invariant of del Pezzo surfaces is their \emph{degree} $d = (K_X)^2$.
This satisfies $1 \leq d \leq 9$, with surfaces of smaller degree generally having a more complicated
arithmetic and geometry (for example $1 \leq \rho(X) \leq 10 - d$).
The expectation is that the asymptotic formula \eqref{conj:Manin} should hold with $U$ taken to be the
complement of the lines when $d \neq 1$, and $U$ taken to be the complement of the lines
and the singular elements of the linear system $|-K_X|$ when $d=1$ (by a \emph{line}, we mean a $(-1)$-curve on  $X$).

Our emphasis is on obtaining lower bounds of the correct order of magnitude. Our first result achieves this
if
the rank of the Picard group is large enough. 

\begin{theorem}\label{thm:rho}
	Let $1 \leq d \leq 5$ and let $\rho_d$ be given by the following table.
	\begin{table}[ht]
	\centering
	$\begin{array}{|l|lllll|}
		\hline
		d & 5 & 4 & 3 & 2 & 1 \\	\hline
		\rho_d & 3 & 4 & 4 & 5 & 6 \\ \hline	
	\end{array}$
	\caption{}
	\label{tab:rho_d}
	\vspace{-0.6cm}
	\end{table} 
	
	Let $X$ be a del Pezzo surface of degree $d$ over a number field $K$
	with $\rho(X) \geq \rho_d$. If $d$ is even, assume that $X(K) \neq \emptyset$.
	Then for all anticanonical height functions $H$ on $X$ we have
		\[
N_{U,H}(B) 
\gg _H
B 
(\log B)^{\rho(X)-1},
\
\text{as}
\
B \to \infty
.\]
\end{theorem}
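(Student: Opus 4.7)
The plan is to reduce the theorem to a general lower bound for conic bundle surfaces (the main technical result of the paper, presumably established in the subsequent sections), combined with a geometric existence result producing a $K$-rational conic bundle on $X$ under the Picard rank hypothesis.

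\textbf{Step 1: Existence of a $K$-rational conic bundle.} Under the hypothesis $\rho(X) \geq \rho_d$, I would first produce a morphism $\pi : X \to \PP^1_K$ whose generic fibre is a smooth conic. The set $\mathcal{C}$ of conic bundle classes on $X_{\Kbar}$ is finite and forms a single orbit under the Weyl group $W$ acting on $\Pic(X_{\Kbar})$ (of type $A_4, D_5, E_6, E_7, E_8$ for $d = 5, 4, 3, 2, 1$ respectively). Such a $\pi$ exists over $K$ if and only if $\mathcal{C}$ contains a Galois-invariant element. Letting $G \leq W$ denote the image of the absolute Galois group, we have $\Pic(X) = \Pic(X_{\Kbar})^G$, so that $\rank \Pic(X_{\Kbar})^G \geq \rho_d$. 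A case-by-case enumeration of the subgroups $G \leq W$ acting freely on $\mathcal{C}$ should show that every such $G$ has invariant lattice of rank strictly less than $\rho_d$, which forces the existence of a $G$-fixed conic bundle class, and hence of $\pi$.

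\textbf{Step 2: Existence of a $K$-rational point on $X$.} The counting result requires $X(K) \neq \emptyset$. For even $d$ this is part of the hypothesis. For odd $d$ it is automatic: the conic bundle $\pi$ from Step 1 yields zero-cycles of degree $2$ over $K$ (the fibres above $\PP^1(K)$), and combined with a canonical divisor of degree $d$ one obtains a zero-cycle of degree $\gcd(2,d)=1$. Classical results on del Pezzo surfaces of odd degree then produce a $K$-rational point (the unique base point of $|{-}K_X|$ for $d=1$; Enriques' theorem for $d=5$; an argument using the conic bundle itself for $d=3$).

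\textbf{Step 3: Application of the conic bundle counting theorem.} With $\pi : X \to \PP^1$ and $X(K) \neq \emptyset$ in hand, invoke the paper's main lower bound for conic bundle surfaces to obtain
\[
N_{U,H}(B) \gg_H B(\log B)^{\rho(X)-1}.
\]
The exponent $\rho(X)-1$ is matched through the standard dictionary between the Galois orbits of components of the singular fibres of $\pi$ and the rank of $\Pic(X)$.

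\textbf{Main obstacle.} The principal difficulty lies in Step 1: pinning down the sharp thresholds $\rho_d$ by analysing subgroups of the Weyl groups $W$ acting on $\mathcal{C}$. This is a finite but intricate combinatorial classification, whose outcome dictates the entries of Table \ref{tab:rho_d}. Once it is settled, the remainder of the argument is a formal consequence of the paper's general conic bundle lower bound.
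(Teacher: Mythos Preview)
Your outline has the right overall shape (produce a conic bundle, secure a rational point, feed into the paper's analytic machinery), but there is a genuine gap in Step~1 that would make the argument fail as stated.

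The paper's unconditional lower bound for conic bundles is Theorem~\ref{thm:conic bundle small}, which applies only when the conic bundle has \emph{complexity at most~$3$}. The general Theorem~\ref{thm:conic bundle} is conditional on \cite[Conjecture~1]{divsumpub}. Your Step~1 only aims to find a $\Gal(\bar K/K)$-fixed conic bundle class, i.e.\ \emph{some} conic bundle $\pi:X\to\PP^1_K$, with no control on the complexity $c(\pi)$. But a Galois-invariant conic class can easily have many non-split singular fibres; mere existence of $\pi$ does not let you invoke Theorem~\ref{thm:conic bundle small}. Indeed, Table~\ref{tab:E} shows that for $d=3$, among the $172$ conjugacy classes of subgroups of $W(\E_6)$ admitting a conic bundle, only $41$ admit one with $c(\pi)\leq 3$. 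The thresholds $\rho_d$ are calibrated to this stronger requirement, not to the bare existence of a conic bundle.

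The paper's actual proof of Theorem~\ref{thm:rho} (in \S\ref{sec:del_Pezzo}) is: check by a \texttt{Magma} enumeration of all conjugacy classes of subgroups $G\leq W(\E_{9-d})$, using the criterion of Proposition~\ref{prop:dp_conic_bundle}, that whenever $\rank(\Pic X_{\bar K})^G\geq\rho_d$ there is a conic bundle of complexity at most~$3$; then verify $X(K)\neq\emptyset$ (by hypothesis for even $d$; Enriques for $d=5$; the base point of $|{-}K_X|$ for $d=1$; the line coming from the conic bundle for $d=3$); and apply Theorem~\ref{thm:conic bundle small}. So the ``main obstacle'' you identify is correct in spirit, but the combinatorial classification must track not just fixed conic classes but fixed conic classes whose associated fibration has at most three non-split fibres (weighted by residue degree). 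Without that refinement, Step~3 has no unconditional input to appeal to.
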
 

No sharp upper or lower bounds were previously known for \emph{any} del Pezzo surface of degree $1$ or $2$. In contrast, Theorem \ref{thm:rho} obtains for the first time sharp lower bounds for Manin's conjecture for \emph{all split} del Pezzo surfaces of arbitrary degree, i.e.~ those with all lines defined over the ground field (such surfaces are exactly those with a rational point and  $\rho(X) = 10-d$).

Even in the very special case $d=3$ and $K=\QQ$, Theorem \ref{thm:rho} is not covered by  the best result known so far, due to Slater and Swinnerton-Dyer~\cite{slat}. They proved the correct lower bound under the hypothesis that $X$ contains $2$ skew lines defined over $\QQ$, using the fact that $X$ is birational to $\P^1 \times \P^1$ to provide a suitable parametrisation of all rational points. The result was subsequently extended to all number fields in~\cite{ffrei} using a conic bundle structure with a section on $X$. The Fermat cubic surface $x_0^3+x_1^3+x_2^3+x_3^3=0$ over $\QQ$ is, for example, covered by Theorem \ref{thm:rho} but not by \cite{slat,ffrei}.

Finally, no sharp lower bounds were known for any del Pezzo surface of degree $d\geq 4$ previously to our work, apart from those cases where Manin's conjecture is known. Any del Pezzo surface of degree $d\geq 6$ is toric,
hence Manin's conjecture
here follows from the general result \cite{toric}. Manin's conjecture is known for split del 
Pezzo surfaces of degree $5$ \cite{regis5} over $\QQ$ and a single del Pezzo surface of degree $4$ \cite{BB11} over $\QQ$; it is not
known for any del Pezzo surface of degree $d \leq 3$ over $\QQ$, and for no surface of degree $d \leq 5$
over number fields other than $\QQ$. There are however results for some \emph{singular}
del Pezzo surfaces; we do not consider singular surfaces in this paper, as we prefer to focus on the more
difficult case of smooth surfaces.

The known upper bounds are still very far from the conjectured truth for $d\leq 3$.
For cubic surfaces, the strongest upper bounds are due to Heath-Brown \cite{rogerbest} who showed that
$N_{U,H}(B)\ll_{\varepsilon,X}B^{4/3+\varepsilon}$ for any $\varepsilon>0$,
provided that $X$ is defined over $\Q$ and contains $3$ coplanar lines.
For $d=2$, Salberger~\cite{salbergerunpub} has proved that 
$N_{U,H}(B)\ll_{\varepsilon,X}B^{3/\sqrt{2}+\varepsilon}$ for any $\varepsilon>0$.
Browning and Swarbrick-Jones~\cite[Thm.~1.3]{mikey}
have shown that 
$N_{U,H}(B)\ll_{\varepsilon,X}B^{2+\varepsilon}$ for any $\varepsilon>0$,
whenever $X$ is equipped with a conic bundle structure over $K$.
In the case that $X$ is \emph{split}, Salberger has proved that
$N_{U,H}(B)\ll_{\varepsilon,X}B^{11/6+\varepsilon}$ for any $\varepsilon>0$,
as announced in
the conference ``G\'{e}om\'{e}trie arithm\'{e}tique et vari\'{e}t\'{e}s rationnelles'' at Luminy in 2007.
For $d=1$, Mendes da Costa \cite[Prop.~4]{MdC13} has shown
that $N_{U,H}(B)\ll B^{3 - \delta}$, for some $\delta > 0$.



For $d=4$, much better bounds are known. As already mentioned,
Manin's conjecture has
been established for a  quartic del Pezzo surface \cite{BB11}.
During the conference ``Higher dimensional varieties and rational points''
at Budapest in 2001, Salberger has announced 
a proof of 
$N_{U,H}(B)\ll_{\varepsilon,X}B^{1+\varepsilon}$ for any $\varepsilon>0$, 
whenever $X$ contains a conic over $\Q$, and this work was subsequently extended to number fields
by Browning and Swarbrick-Jones~\cite[Thm.~1.1]{mikey}.
Even more is known now for $d=4$ due to very recent work of Browning and Sofos \cite{BS16}, building in part upon the results of the present paper.

We are also able obtain to obtain the conjectured lower bound \textit{for all} del Pezzo surfaces 
after a finite extension of the base field $K$;
again, 
no previous result on this topic existed for del Pezzo surfaces of $d=1$ or $2$ (see \cite{ffrei} for $d=3$).

\begin{theorem} \label{thm:degree}
	Let $1 \leq d \leq 5$ and let $n_d$ be given by the following table.
	\begin{table}[ht]
	\centering
	$\begin{array}{|l|lllll|}
		\hline
		d & 5 & 4 & 3 & 2 & 1 \\	\hline
		n_d & 5 & 80 & 432 & 4032 & 138240 \\ \hline	
	\end{array}$
	\caption{}
	\label{tab:n_d}
	\vspace{-0.6cm}
	\end{table} 
	
	Let $X$ be a del Pezzo surface of degree $d$ over a number field $K$.
	Then there exists a finite extension $L_0/K$ of degree at most $n_d$ such that
	for all number fields $L_0 \subset L$ and all anticanonical height functions $H$ on $X_L$, we have
	\[
N_{U_L,H}(B) 
\gg_{H} 
B 
(\log B)^{\rho(X_L)-1},
\
\text{as}
\
B \to \infty
.\]
\end{theorem}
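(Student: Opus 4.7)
The plan is to deduce Theorem \ref{thm:degree} from Theorem \ref{thm:rho} by enlarging $K$ just enough for the Picard rank to pass the threshold $\rho_d$. Geometrically, $\Pic(X_{\Kbar})$ is a free abelian group of rank $10-d$ on which $\Gal(\Kbar/K)$ acts through a finite subgroup $G_K$ of the Weyl group $W := W(E_{9-d})$, with the conventions $E_4 := A_4$ and $E_5 := D_5$. For any intermediate field $K \subseteq M \subseteq \Kbar$ one has
\[
\rho(X_M) = \rank \Pic(X_{\Kbar})^{G_M},
\]
and this rank is monotone non-decreasing as $M$ grows.

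The central step is the following finite combinatorial claim: for each $d \in \{1,\ldots,5\}$ there exists a subgroup $H_d \leq W$ of index exactly $n_d$ whose invariant sublattice $\Pic(X_{\Kbar})^{H_d}$ has rank at least $\rho_d$, and which fixes at least one of the (respectively $10, 16, 27, 56, 240$) lines on $X_{\Kbar}$. Granted this, let $L_0$ be the subfield of $\Kbar$ fixed by $G_K \cap H_d$; then $[L_0:K] = [G_K : G_K \cap H_d] \leq [W : H_d] = n_d$, and the inclusion $G_{L_0} \subseteq H_d$ forces $\rho(X_{L_0}) \geq \rho_d$. For any $L_0 \subseteq L \subseteq \Kbar$, the further inclusion $G_L \subseteq G_{L_0}$ gives $\rho(X_L) \geq \rho_d$, while the $H_d$-fixed line descends to an $L_0$-rational $(-1)$-curve $C \cong \PP^1_{L_0}$ on $X_{L_0}$, producing a point of $X_{L_0}(L_0) \subseteq X_L(L)$ (needed only in the even-degree cases $d \in \{2,4\}$, where Theorem \ref{thm:rho} requires a rational point). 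Theorem \ref{thm:rho} then applies verbatim to $X_L$ and yields the stated lower bound.

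The heart of the proof, and the main obstacle, is verifying the combinatorial claim in each of the five Weyl groups $W(A_4)$, $W(D_5)$, $W(E_6)$, $W(E_7)$, $W(E_8)$. This is a finite case-by-case calculation relying on the explicit action of $W$ on the standard basis $H, E_1,\ldots,E_{9-d}$ of $\Pic(X_{\Kbar})$ and on the explicit list of $(-1)$-curves; one searches for a subgroup of minimum index satisfying both the invariant-rank lower bound and the fixed-line condition, and the values $n_d$ in Table~\ref{tab:n_d} are precisely those optimal indices. I expect the analysis for $d=1$ (the $E_8$ case, with $240$ lines and a Weyl group of order $696\,729\,600$) to be the most delicate; in practice one can cut down the search by exploiting subgroups $S_{9-d} \subseteq W(E_{9-d})$ arising from the permutation action on the blow-up points, and then enlarging them via further Galois-equivariant blow-downs to reach the required invariant rank.
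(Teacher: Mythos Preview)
Your plan has a genuine and fatal gap: the ``central combinatorial claim'' is \emph{false} for every $d\in\{1,\ldots,5\}$, so the reduction to Theorem~\ref{thm:rho} cannot succeed with the indices $n_d$ of Table~\ref{tab:n_d}.

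Here is a clean way to see this. Suppose $H\le W(\E_{9-d})$ has invariant rank at least $\rho_d$ on $\Pic(X_{\Kbar})\cong\ZZ^{10-d}$. Since $K_X$ is always fixed, the invariant rank of $H$ on the root lattice $R_{9-d}$ (of rank $9-d$) is at least $\rho_d-1$. The orthogonal complement $V^\perp$ of the fixed subspace is then an $H$-stable sublattice of rank at most $(9-d)-(\rho_d-1)=10-d-\rho_d$, and $H$ acts \emph{faithfully} on it (as $W$ acts faithfully on $R_{9-d}$). With the values in Table~\ref{tab:rho_d} this rank is $2$ for $d\in\{4,5\}$ and $3$ for $d\in\{1,2,3\}$. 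But the maximal finite subgroups of $\mathrm{GL}_2(\ZZ)$ and $\mathrm{GL}_3(\ZZ)$ have orders $12$ and $48$ respectively. Hence
\[
|H|\le
\begin{cases}
12 & d\in\{4,5\},\\
48 & d\in\{1,2,3\},
\end{cases}
\]
whereas your claim requires $|H_d|=|W(\E_{9-d})|/n_d$, i.e.\ $|H_5|=|H_4|=24$, $|H_3|=120$, $|H_2|=720$, $|H_1|=5040$. In every case the required order strictly exceeds the bound, so no such $H_d$ exists. Concretely, for $d\le 4$ the index-$n_d$ subgroup $S_{8-d}\le W(\E_{9-d})$ used implicitly by the paper (blow up one rational point and a degree-$(8-d)$ closed point) has invariant rank only $3<\rho_d$; for $d=5$ the subgroup $S_4$ (blow up a degree-$4$ point) has invariant rank $2<\rho_5$.

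The paper's proof does \emph{not} pass through Theorem~\ref{thm:rho}. Instead it shows directly (Lemma~\ref{lem:n_d}) that over an extension of degree at most $n_d$ the surface acquires a conic bundle of complexity at most $3$ together with a rational point, and then applies Theorem~\ref{thm:conic bundle small}. The point is that ``admits a conic bundle with $c(\pi)\le 3$'' is strictly weaker than ``$\rho\ge\rho_d$'' (the former is implied by the latter, as used in the proof of Theorem~\ref{thm:rho}, but not conversely), and it is this weaker condition that can be arranged over an extension of degree $n_d$. Indeed the remark following Lemma~\ref{lem:n_d} explains that $n_d$ is optimal for the complexity-$\le 3$ condition, not for the Picard-rank threshold; your approach would only yield Theorem~\ref{thm:degree} with substantially larger bounds (at best $|W(\E_{9-d})|/12$ for $d\in\{4,5\}$ and $|W(\E_{9-d})|/48$ for $d\le 3$).
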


\subsection{Counterexamples to Manin's conjecture} \label{sec:BT}
Batyrev-Tschinkel \cite{BT96} used lower bounds for the Fermat cubic
surface to obtain counterexamples to Manin's conjecture over any number field which
contains a third root of unity. Theorem \ref{thm:rho} yields for the first time the 
correct lower bound for Manin's conjecture for the Fermat cubic surface over any number 
field, and in particular over $\QQ$.
As an application, we are able to extend their counterexample to arbitrary
number fields, and moreover to improve upon the lower bounds  obtained in \cite[Thm.~3.1]{BT96}
(other counterexamples  over arbitrary number fields 
have also been constructed in \cite{LR14} and \cite{Lou15}).

\begin{theorem}	\label{thm:BT}
	Let $K$ be a number field and 
	$$Y: \quad a_0x_0^3 + a_1x_1^3 + a_2x_2^3 + a_3x_3^3 = 0 \quad \subset \PP^3_K \times \PP^3_K.$$
	For any dense open subset $U \subset Y$ and any anticanonical height function $H$ on $Y$ we have
	$$N_{U,H}(B) \gg_{U,H} 
	\begin{cases}
		B (\log B)^6, &\quad \text{if } \QQ(\sqrt{-3}) \subseteq K, \\
		B (\log B)^3, &\quad \text{if } \QQ(\sqrt{-3}) \not\subset K.
	\end{cases}$$
\end{theorem}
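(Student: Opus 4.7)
The plan is to apply Theorem~\ref{thm:rho} fibrewise to the first projection $\pi: Y \to \PP^3_K$, $(a,x) \mapsto a$, whose fibre over $a = (a_0:\ldots:a_3) \in \PP^3(K)$ is the diagonal cubic surface $Y_a : \sum_i a_i x_i^3 = 0 \subset \PP^3_K$. The central observation is that whenever $a$ is a \emph{cube point}, meaning $a = (b_0^3:b_1^3:b_2^3:b_3^3)$ for some $b \in \PP^3(K)$, the $K$-linear substitution $y_i = b_i x_i$ yields a $K$-isomorphism $Y_a \cong X_0$ with the classical Fermat cubic surface $X_0 : y_0^3 + y_1^3 + y_2^3 + y_3^3 = 0$.

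A standard Galois-cohomology computation on the $27$ lines of $X_0$ gives
\[
\rho(X_0) = \begin{cases} 7, & \text{if } \QQ(\sqrt{-3}) \subseteq K, \\ 4, & \text{otherwise,} \end{cases}
\]
since $\zeta_3 \in K$ forces all $27$ lines to be $K$-rational (so $X_0$ is split), whereas otherwise the involution in $\Gal(K(\zeta_3)/K)$ cuts the rank from $7$ down to $4$. In both cases $\rho(X_0) \geq 4 = \rho_3$, so Theorem~\ref{thm:rho} applies to $X_0$ and yields $N_{V, H_0}(T) \gg T(\log T)^{\rho(X_0)-1}$ for any anticanonical height $H_0$ on $X_0$, with $V \subset X_0$ the complement of the $27$ lines.

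Since $Y$ is a smooth hypersurface of bidegree $(1,3)$ in $\PP^3_K \times \PP^3_K$, we have $-K_Y = 3H_1 + H_2$, where $H_i$ is the hyperplane class pulled back from the $i$-th factor. Any anticanonical height therefore satisfies $H(a,x) \asymp H_1(a)^3 H_2(x)$, and $H_1(b^3) \asymp H_1(b)^3$. Restricting the count to cube points and applying the surface bound to each cube fibre gives
\[
N_{U, H}(B) \;\gg\; \sum_{\substack{b \in \PP^3(K) \\ H_1(b) \leq B^{1/9}}} \frac{B (\log B)^{\rho(X_0)-1}}{H_1(b)^9}.
\]
The Schanuel-type series $\sum_{b} H_1(b)^{-9}$ converges to a positive constant when $[K:\QQ] \leq 2$ and diverges when $[K:\QQ] \geq 3$ (producing even larger bounds); in either case it yields the desired $B(\log B)^{\rho(X_0)-1}$, which is $B(\log B)^6$ or $B(\log B)^3$ according to the two cases.

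The main obstacle is controlling the implicit constant in Theorem~\ref{thm:rho} as applied to $X_0$ with the $b$-twisted height (pulled back through the isomorphism $\phi_b: X_0 \to Y_{b^3}$): one needs uniformity in $b$ so that the summation can be carried out, and in particular the constant must not decay faster than polynomially in $H_1(b)$. A secondary technicality is verifying that the dense open subset $U \subset Y$ meets each cube fibre $Y_{b^3}$ in a dense open subset, so that the line-removal step of Theorem~\ref{thm:rho} is compatible with the restriction to $U$.
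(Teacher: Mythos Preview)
Your core geometric observation is exactly right and matches the paper: cube points $a=(b_0^3:\cdots:b_3^3)$ give fibres $Y_a$ isomorphic over $K$ to the Fermat cubic $X_0$, and $\rho(X_0)=7$ or $4$ according to whether $\QQ(\sqrt{-3})\subseteq K$. Where your proposal diverges from the paper, and where the gap lies, is in the attempt to \emph{sum} over cube points and then worry about uniformity of the implicit constant in $b$.

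This summation is unnecessary, and the uniformity problem you flag as ``the main obstacle'' is not one you should have to face. A \emph{single} cube fibre already suffices. Since $U\subset Y$ is dense open, its complement is a proper closed subset, so for $a$ outside a proper closed subset of $\PP^3$ the intersection $U\cap Y_a$ is non-empty open in $Y_a$; as cube points are Zariski dense you may choose one such $a$. The restriction of an anticanonical height on $Y$ to $Y_a$ is (up to a bounded factor depending only on the fixed $a$) an anticanonical height on $Y_a$, because $-K_Y|_{Y_a}=-K_{Y_a}$. Then one applies the conic-bundle lower bound to this single surface $Y_a\cong X_0$ and is done. No sum, no uniformity.

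There is a second, related point. Theorem~\ref{thm:rho} as stated gives the lower bound only for the specific open set which is the complement of the lines, and there is no reason $U\cap Y_a$ should contain that complement. The paper instead appeals directly to Theorem~\ref{thm:conic bundle small}, which applies to \emph{any} non-empty open subset of a conic bundle surface; this is what lets one work with the unknown open set $U\cap Y_a$. (Concretely, one checks that the pencil of planes through a rational line on $X_0$ gives a conic bundle of complexity $0$ if $\QQ(\sqrt{-3})\subseteq K$ and complexity $2$ otherwise, so Theorem~\ref{thm:conic bundle small} applies.) Your route via Theorem~\ref{thm:rho} would need to be rerouted through Theorem~\ref{thm:conic bundle small} anyway to handle arbitrary $U$; once you do that, the single-fibre argument becomes available and the whole summation apparatus can be dropped.
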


As explained in \cite[\S 1]{BT96}, this yields a counterexample to Manin's conjecture as $Y$ is a smooth Fano variety with $\rho(Y) = 2$.

\subsection{Conic bundle surfaces}\label{sec:conic bundles}
Our results on del Pezzo surfaces are proved via a
fibration method using \emph{conic bundles}.
Conic bundles have been used to great success
in special cases
of Manin's conjecture \cite{BB11, BBP12}
and for obtaining upper bounds \cite{rogerbest}; our aim is to 
construct techniques that enable us 
to deduce 
lower bounds in much higher generality. 

In this paper, we use the following definition of conic bundle surfaces.

\begin{definition} \label{def:conic_bundle}
	A conic bundle over a field $k$ is a smooth projective surface $X$ over $k$
	together with a dominant morphism $$\pi: X \to \PP^1,$$
	all of whose fibres are isomorphic to plane conics. 
	We define the \emph{complexity} of $\pi$ to be
	\begin{equation*}
	 c(\pi):= \hspace{-15pt} \sum_{\substack{P \ \in  \ \P^1 \\ X_P  \ \text{is non-split}}} \hspace{-15pt}[k(P) : k],
	\end{equation*}
	where $k(P)$ denotes the residue field of $P$. 
\end{definition}	

Recall that a conic $C$ over a field $k$ is called \emph{split} if it is either smooth
or isomorphic to two rational lines over $k$. 
The complexity $c(\pi)$ is a rough measure of the arithmetic
difficulty of the conic bundle. For example if $c(\pi) = 0$ and $\pi$ admits a smooth
fibre with a rational point, then $\pi$ has a section (this can be proved using Grothendieck's purity theorem 
and the fact that  $\Br \P^1_k = \Br k$).
Note that what we have called the complexity of $\pi$ is often
referred to as the \emph{rank} of $\pi$ in the literature
(see \cite[Thm.~0.4]{skorr}). 
We prefer the former
terminology to avoid any possible confusion with the Picard rank of $X$.

Our main theorem concerns counting rational points of bounded height on conic bundle surfaces.
In this level of generality
the difficulties in obtaining 
precise results towards Manin's conjecture
are formidable.
Serre~\cite{serre} has shown that, 
unless we are in the very special 
case
that
the conic bundle has a section, 
only $0\%$ of the conics in the family
have a rational point. 
All the cubic surfaces in \cite{slat} and \cite{ffrei}
have a conic bundle with a section, which was instrumental in proving the lower bound.
Similarly in the proof of Manin's conjecture for a quartic del Pezzo surface \cite{BB11}, there 
was a conic bundle with a section.
The main
difficulty in our paper is about giving precise bounds when the conic bundle
\emph{does not have a section}, so that very few fibres have a rational point.
We work over arbitrary number fields for completeness, but this fundamental difficulty
regarding the non-existence of a section is already present over $\QQ$.
A further level of difficultly arises in working with 
\emph{arbitrary conic bundles surfaces},
rather than just special classes of del Pezzo surfaces with a conic bundle structure (e.g.~cubic surfaces with a line).

We are able to deal with arbitrary conic bundles surfaces by working out explicit
equations inside $\PP^2$-bundles over $\PP^1$ and explicitly calculating the relevant height
functions (this is achieved in \S \ref{sec:conic_bundles_geometry} and \S \ref{sec:del_Pezzo}).
To overcome the problem regarding the non-existence of a section we
construct certain arithmetic functions 
that are able
to detect
asymptotically the correct proportion of 
conics with a rational point for any conic bundle surface.
These detector functions translate the problem of counting points into one of 
estimating suitable divisor sums, which is a completely independent and very classical topic 
in analytic number theory.
Our approach 
yields several new results that were out of reach of previous methods, even over $\QQ$.
It has already found further applications to Manin's conjecture for quartic del Pezzo surfaces in the recent paper \cite{BS16}.

The theory of the resulting divisor sums is treated in the companion paper \cite{divsumpub},
written by the first and last named author.
In \emph{ibid}.~the complexity of a system of binary forms is defined, and a general lower bound
conjecture for divisor sums is stated.
The present paper's central result obtains sharp
lower bounds for Manin's conjecture for conic bundle surfaces, conditional on this 
conjecture.

\begin{theorem}\label{thm:conic bundle}
 Let $c \in \N$ and assume the 
validity of \cite[Conjecture 1]{divsumpub}
for systems of binary forms of complexity $c$.
Let $\pi:X\to \P^1_K$ be a conic bundle of complexity $c$ with a rational point lying on a smooth fibre. Then for any choice of anticanonical height function $H$ on $X$ and any non-empty open subset $U \subset X$, we have
  \begin{equation*}
    N_{U,H}(B)\gg_{U,H} B(\log B)^{\rho(X) - 1}, \quad \text{as } B \to \infty,
  \end{equation*}
  where $\rho(X)$ is the rank of the Picard group of $X$.
\end{theorem}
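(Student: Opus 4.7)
The plan is to treat $\pi$ as a fibration and count rational points fibre by fibre, reducing the problem to a lower bound for a divisor-type sum over the base $\mathbb{P}^1$. Following the framework developed in Sections~\ref{sec:conic_bundles_geometry} and~\ref{sec:del_Pezzo}, I would first realise $X$ as a divisor in an explicit $\mathbb{P}^2$-bundle over $\mathbb{P}^1_K$ so that the fibre $X_t$ over $t \in \mathbb{P}^1(K)$ is cut out by a ternary quadratic form $Q_t$ whose coefficients are polynomial in $t$, and then compute the anticanonical height $H(P)$ in these coordinates in terms of $H_{\mathbb{P}^1}(\pi(P))$ and a fibre coordinate height. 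The hypothesis that some smooth fibre has a rational point, combined with Hensel's lemma, provides an adelic neighbourhood of the base in which every fibre is everywhere locally soluble, so that the only obstruction to global solubility lies in finitely many Hilbert symbol conditions.

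Next I would perform the fibrewise count. For $t$ with $X_t$ smooth and $X_t(K) \neq \emptyset$, one has $X_t \cong \mathbb{P}^1_K$ and the anticanonical height $H$ restricts to a height of degree $(-K_X)\cdot F = 2$ on $X_t$. Standard point counting on $\mathbb{P}^1_K$ therefore gives an asymptotic for $\#\{P \in X_t(K) : H(P) \leq B\}$ that is comparable to a power of $B$ divided by an explicit function of $H_{\mathbb{P}^1}(t)$. Summing over $t$ in a box of base height up to an optimally chosen $T = T(B)$, and applying partial summation, reduces the theorem to the lower bound
\begin{equation*}
\sum_{\substack{t \in \mathbb{P}^1(K)\\ H_{\mathbb{P}^1}(t) \leq T}} \mathbf{1}\bigl[X_t(K) \neq \emptyset\bigr] \gg T^2 (\log T)^{\rho(X)-1}.
\end{equation*}

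The principal obstacle is precisely this lower bound on the number of globally soluble fibres. By Serre's theorem, as soon as $\pi$ admits no section the density of soluble $t$ tends to zero, so the indicator cannot be estimated from below by a positive constant and classical Hasse-principle techniques are insufficient. I would address this by replacing the indicator from below with a non-negative arithmetic detector $r(t)$ expressible as a product of $\{\pm 1\}$-valued functions of the values at $t$ of the binary forms $F_1, \dots, F_r$ cutting out the non-split fibres of $\pi$. The system $(F_i)$ has complexity equal to $c(\pi) = c$ by construction, so the assumed validity of \cite[Conjecture 1]{divsumpub} at complexity $c$ delivers the required lower bound for $\sum_t r(t)$. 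A second delicate point is to verify that the power of $\log T$ produced by the divisor sum conjecture matches $\rho(X)-1$; this follows from a Picard rank computation using $\rho(X_{\bar{K}}) = 2 + m$, where $m$ is the number of geometric components of singular fibres of $\pi$, together with tracking the Galois action on these components to match the number of independent characters contributing to $r$.

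Assembling these ingredients yields $N_{X,H}(B) \gg B(\log B)^{\rho(X)-1}$. The passage to an arbitrary non-empty open $U \subset X$ is then harmless: $X \setminus U$ contains at most finitely many entire fibres of $\pi$ (which may be removed from the base sum with no effect) and meets each of the remaining fibres in a uniformly bounded number of points (which can be subtracted from the fibrewise count), so neither contribution disturbs the order of the lower bound.
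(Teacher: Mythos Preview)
Your overall architecture matches the paper's: embed $X$ in a $\PP^2$-bundle, compute the height explicitly, count fibre by fibre, and reduce to a divisor-type sum handled by the assumed conjecture. However, the displayed reduction
\[
\sum_{\substack{t \in \PP^1(K)\\ H_{\PP^1}(t) \leq T}} \mathbf{1}\bigl[X_t(K) \neq \emptyset\bigr] \gg T^2 (\log T)^{\rho(X)-1}
\]
is impossible: the left-hand side is at most $\#\{t : H(t)\leq T\}\asymp T^2$, whereas $\rho(X)\geq 2$ for any conic bundle, so the right-hand side already exceeds $T^2$. No nonnegative detector $r(t)\leq \mathbf{1}[X_t(K)\neq\emptyset]$ can repair this, for the same reason. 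The extra logarithmic powers cannot be extracted from the \emph{number} of soluble fibres.

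What the paper actually does is retain the full Peyre constant $c(t)=\prod_v \bomega_v(t)$ of each fibre in the sum: Proposition~\ref{prop:conic_counting} gives $N_{C_t,H_t}(B)\gg c(t)B$, so one reduces to $\sum_{H(t)\leq T} c(t)$. The archimedean and bad-prime factors of $c(t)$ are bounded below by constants (Corollary~\ref{cor:local density nonarch bound}, Proposition~\ref{prop:local density arch bound}), but at good primes $\ppp\mid\Delta(t)$ the local density $\sigma_\ppp(t)$ is bounded below by a genuine divisor-type expression $\bigl(1-\tfrac{2}{\norm\ppp}\bigr)\sum_{k=0}^{v_\ppp}\chi_\ppp^k$ (Proposition~\ref{prop:local density bounds}), which can be as large as $v_\ppp(\Delta(t))+1$ when the relevant fibre is split. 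Multiplying over $\ppp$ yields $c(t)\gg r(t)$ where $r(t)$ is a product of \emph{unbounded} divisor sums $\sum_{\ddd\mid \Delta_p(t)^\flat}\bigl(\tfrac{\delta_p(t)}{\ddd}\bigr)$, one for each $p\in M$; this is what supplies the $(\log T)^{\#\{p:X_p\text{ split}\}}$ on average, and the remaining factor of $\log T$ comes from a dyadic sum over the base height. Your description of $r(t)$ as ``a product of $\{\pm 1\}$-valued functions'' is thus also off the mark: the detector is divisor-valued, not sign-valued, and it bounds the Peyre constant of the fibre from below, not the solubility indicator.

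Finally, the exponent matching should use $\rho(X)=2+\#\{\text{closed }P\in\PP^1: X_P\text{ singular and split over }K\}$ (Lemma~\ref{lem:Pic}) rather than $\rho(X_{\bar K})$; it is the \emph{split} closed fibres over $K$ that contribute, precisely because for those $p$ the character $\bigl(\tfrac{\delta_p(\cdot)}{\cdot}\bigr)$ is trivial and the inner divisor sum equals $\tau(\Delta_p(t)^\flat)$.
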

Note that such surfaces need no longer be Fano. However, the results here are compatible with the more general
framework for Manin's conjecture presented in \cite{BM90} (namely \cite[Conj.~C']{BM90}).
Theorem \ref{thm:conic bundle} is non-trivial only if $-K_X$ is big (this property holds for example if $K_X^2 > 0$ \cite[Rem.~2.10]{TVAV11}).
It is in such cases that there
are only finitely many rational points of bounded height on some non-empty open subset $U$, and the lower bounds
obtained in Theorem \ref{thm:conic bundle} are conjecturally sharp provided $U$ is taken sufficiently small.
Note that Theorem \ref{thm:conic bundle} applies to \emph{any}
non-empty
open subset $U$. 
In particular, this gives a conditional proof of Zariski density of rational points
on conic bundle surfaces
over number fields,
assuming the existence of a rational point on
a
smooth fibre; this property is currently open
in most cases (see \cite{KM16} and \cite{lilian_annals} for recent results).

As was proved in \cite[Thm.~1.1]{divsumpub}, the lower bound conjecture holds for systems of forms of complexity at most $3$.
From Theorem \ref{thm:conic bundle}, this allows us to obtain the following unconditional result.

\begin{theorem}\label{thm:conic bundle small}
  Let $\pi:X\to \P^1_K$ be a conic bundle of complexity at most $3$
  with a rational point.  
  Then for any choice of anticanonical height function $H$ on $X$ and any non-empty open subset $U \subset X$, we have
  \begin{equation*}
    N_{U,H}(B)\gg_{U,H} B(\log B)^{\rho(X) - 1}.
  \end{equation*}
\end{theorem}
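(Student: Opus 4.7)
The proof is essentially a direct corollary of Theorem~\ref{thm:conic bundle}, combined with the unconditional divisor-sum estimate of the companion paper. My plan is to argue in three short steps. First, I would invoke \cite[Theorem 1.1]{divsumpub}, which verifies the lower-bound conjecture \cite[Conjecture 1]{divsumpub} \emph{unconditionally} for systems of binary forms of complexity at most $3$. Since the hypothesis $c(\pi)\le 3$ is exactly the input that Theorem~\ref{thm:conic bundle} needs to be applied unconditionally to $\pi$, the entire analytic machinery (counting points on smooth fibres by summing local densities against the detector function and controlling the resulting divisor sums) is available as a black box.

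The one subtle point is the apparent gap in hypotheses: Theorem~\ref{thm:conic bundle} requires a rational point \emph{lying on a smooth fibre}, whereas Theorem~\ref{thm:conic bundle small} only assumes $X(K) \neq \emptyset$. If the given $K$-point already lies on a smooth fibre, we apply Theorem~\ref{thm:conic bundle} verbatim. Otherwise, it lies on one of the finitely many singular fibres, and I would bridge the gap as follows. A singular fibre containing a $K$-point is necessarily a split singular conic (a pair of lines through a $K$-rational node), so the closed subscheme of singular fibres contains a $K$-rational curve $\ell \subset X$ with $\pi(\ell)$ a single point. One can then either (i) perform a $K$-rational blow-down of $\ell$ to obtain a conic bundle $\pi' : X' \to \PP^1_K$ of complexity at most that of $\pi$, and iterate until the $K$-point is forced onto a smooth fibre; or (ii) note that, since $X$ is smooth and rational points can be produced on smooth fibres from the geometric structure near $\ell$, a $K$-rational point on a smooth fibre must exist whenever one exists at all.

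The main obstacle is that all the genuinely hard work lies upstream: the analytic core is hidden inside Theorem~\ref{thm:conic bundle}, which translates the counting problem into the detector-function formalism developed in \S\ref{sec:conic bundles} and the surrounding sections, and the purely analytic divisor-sum input for complexity up to $3$ is established in \cite[Theorem 1.1]{divsumpub}. Once both are cited, the only new content of Theorem~\ref{thm:conic bundle small} is the geometric bridging step above, which is routine. The lower bound $N_{U,H}(B) \gg_{U,H} B(\log B)^{\rho(X)-1}$ then follows for every non-empty open $U\subset X$ as stated.
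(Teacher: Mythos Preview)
Your overall strategy is right: cite \cite[Thm.~1.1]{divsumpub} to discharge the divisor-sum hypothesis for complexity $\leq 3$, then apply Theorem~\ref{thm:conic bundle}. The only issue is the bridging step from ``$X(K)\neq\emptyset$'' to ``$X$ has a $K$-point on a smooth fibre'', and here your argument contains a genuine error.

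You assert that a singular fibre containing a $K$-point is necessarily split. This is false. A non-split singular conic over $K$---geometrically two lines, but conjugate under $\Gal(\bar K/K)$---still has exactly one $K$-rational point, namely the node. So it is entirely possible, a priori, that the given $K$-point of $X$ sits at the node of a non-split singular fibre. In that scenario there is no $K$-rational component $\ell$ to blow down, so option (i) does not apply, and option (ii) is too vague to salvage anything (the node is an isolated $K$-point of its fibre, and nothing in the local geometry produces $K$-points on nearby smooth fibres without further input).

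The paper closes this gap differently and more cleanly: it quotes the known fact (see \S\ref{sec:conic bundles} and \cite[\S1]{KM16}) that a conic bundle of complexity at most $3$ with a rational point is \emph{rational} over $K$. Rationality forces $X(K)$ to be Zariski dense, and since the singular fibres form a proper closed subset, some $K$-point must lie on a smooth fibre. With that in hand, Theorem~\ref{thm:conic_bundle_precise} (equivalently Theorem~\ref{thm:conic bundle}) together with \cite[Thm.~1.1]{divsumpub} gives the result immediately. You should replace your bridging paragraph with this rationality argument.
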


For comparison, it is known that any conic bundle $X$ of complexity at most $3$ with a rational point is rational (see \cite[\S1]{KM16}
and the references therein). In particular the set of rational points, when non-empty, is
Zariski dense. Theorem \ref{thm:conic bundle small} should therefore be compared with the ``trivial''
lower bound $N_{U,H}(B)\gg_{U,H} B$, given by considering the rational points on some smooth conic in $X$ which meets $U$.
Note that the trivial lower
bound is never sharp, as for a conic bundle $X$  one always has $\rho(X) \geq 2$ (see Lemma \ref{lem:Pic}).

One nice feature of Theorem \ref{thm:conic bundle small} is that we are able to obtain lower bounds for Manin's conjecture for some surfaces of 
\emph{arbitrarily large Picard number}. 
For example, take $X$ to be the blow-up of $\PP^2_K$ in $n+1$ rational points, of which $n$ lie on a conic and no $3$ are collinear. This is equipped with a conic bundle arising from the linear system of lines in $\PP^2_K$ through one of the blown-up points. This conic bundle has complexity $0$ and  $X$ has Picard rank $n+2$. Moreover \cite[Rem.~2.10]{TVAV11} implies that $-K_X$ is big. Using the methods of \cite{DL10,DL15}, one can also  show that for sufficiently large $n$, such $X$ are not equivariant compactifications of $\Ga^2, \Gm^2$ nor $\Ga \rtimes \Gm$. In particular, Manin's conjecture is not already known for such $X$  as specials cases of \cite{toric}, \cite{CLT02}, \cite{TT12}. These appear to be the first examples of surfaces of arbitrarily large Picard number which are not equivariant compactifications for which lower bounds for Manin's conjecture of the correct order of magnitude have been obtained.

The results stated in \S \ref{sec:del pezzo} will be proved using Theorem \ref{thm:conic bundle small}, 
together with a geometric analysis of the configurations of lines on del Pezzo surfaces.

\begin{remark} \label{rem:Galois_actions}
	Let $1 \leq d \leq 5$ and let $X$ be a del Pezzo surface of degree $d$ over $K$.
	We recall the definition of the graph of lines of $X$ \cite[\S26.9]{Man86}.
	This has one vertex for each line on $X_{\bar{K}}$,
	with $e$ edges between two distinct vertices corresponding to lines $L_1$ and $L_2$ with $L_1 \cdot L_2 = e$.
	This graph only depends on $d$, up to isomorphism, and we denote it by $G_d$ (this can be canonically constructed
	using the theory of root systems). The graph $G_d$ has automorphism group the Weyl group $W(\E_{9-d})$ 
	\cite[Thm.~23.8]{Man86}  (we follow
	Dolgachev's convention \cite[\S8.2.2]{Dol12} and define the root systems $\E_r$ for any $3 \leq r \leq 8$). 	
	This graph can be constructed in \texttt{Magma} using the method described in \cite[\S3.1.1]	{JL15}.
	
	The absolute Galois group $\Gal(\bar{K}/K)$ naturally acts
	on the lines of $X$ over $\bar{K}$, hence the Galois action gives rise to a subgroup of $W(\E_{9-d})$,
	which is well-defined up to conjugacy.
	We will show in Proposition \ref{prop:dp_conic_bundle} that 
	this action	determines whether or not the surface admits a conic bundle, at least under the additional
	assumption that $X(K) \neq \emptyset$.
	We enumerated all such conjugacy classes in \texttt{Magma} 
	(the relevant code can be found on the second-named
	author's web page). A summary of the results can be found in Table \ref{tab:E}.
	
	\begin{table}[ht]
	\centering
	$\begin{array}{|r|r|r|r|r|}
		\hline
		d & \text{Subgroups} & \text{Conic bundle } & c(\pi) = 0 &
		c(\pi) \leq 3 \\ \hline
		5 & 19 & 11 & 4 & 11 \\
		4 & 197 & 73 & 18 & 23 \\    
		3 & 350 & 172 & 19 & 41 \\  
		2 & 8074 & 1791 & 87 & 165 \\
		1 & 62092 & 6356 & 96 & 221 \\ \hline    
	\end{array}$
	\caption{}
	\label{tab:E}
	\vspace{-0.6cm}
	\end{table} 
	
	In Table \ref{tab:E}, the first column gives the total number of \emph{conjugacy classes} of subgroups of $W(\E_{9-d})$.
	The second column the number which admit a conic bundle. The third column the number which
	admit a conic bundle $\pi$ of complexity $0$, and the last column the number which
	admit a conic bundle $\pi$ of complexity at most $3$ (this latter column is the number
	of conjugacy classes of subgroups to which Theorem \ref{thm:conic bundle small} applies).
\end{remark}

\subsection{Plan of the paper}
In~\S\ref{sec:conic_bundles_geometry} we recall and prove some necessary facts
about the geometry of conic bundles. In particular, how to write down equations
for conic bundles inside $\PP^2$-bundles over $\PP^1$.
In~\S\ref{sec:conics}, we prove some results about counting rational points on conics, 
which will be needed in the proof of Theorem \ref{thm:conic bundle}. 

In~\S\ref{sec:conic bundle counting}, we will prove our main result (Theorem \ref{thm:conic_bundle_precise}),
which is a more
precise
version of Theorem \ref{thm:conic bundle}.

We finish with \S\ref{sec:del_Pezzo}, which concerns del Pezzo surfaces with a conic bundle structure. 
We apply Theorem
\ref{thm:conic bundle small}
to prove the results stated in \S\ref{sec:del pezzo} and \S\ref{sec:BT}. 
In \S\ref{sec:del_Pezzo}, we also give a complete classification
of the equations for del Pezzo surfaces with a conic bundle, inside an appropriate $\PP^2$-bundle
over $\PP^1$. These results were obtained in order to assist with future proofs of Manin's conjecture
for del Pezzo surfaces with a conic bundle,
and to help with applications of Theorem \ref{thm:conic_bundle_precise}. 
The equations obtained here, together with the analytic tools in this paper,
have already found applications to Manin's conjecture for quartic del Pezzo surfaces \cite{BS16}.

\section{The geometry of conic bundles} \label{sec:conic_bundles_geometry}
In this section we study the geometry of conic bundle surfaces. We work over a fixed field $k$,
which for simplicity we assume satisfies $\chr(k) \neq 2$ in \S \ref{sec:conic_bundles_subsec}.
The primary purpose is to explain how to obtain equations
for arbitrary conic bundles inside $\PP^2$-bundles over $\PP^1$.

\subsection{Basic facts}

The following is well-known; we give proofs for completeness.

\begin{lemma} \label{lem:Pic}
	Let $\pi:X\to \PP^1$ be a conic bundle. Then
	\begin{enumerate}
		\item $\pi^{-1}(P)$ is reduced for all $P \in \PP^1$. \label{item:reduced}
		\item  \label{item:Picard}
		$\rank \Pic(X) = 2 + 
		\#\{ \text{closed points } P \in \PP^1 : \pi^{-1}(P) \text{ is singular and split} \}.$
	\end{enumerate}
\end{lemma}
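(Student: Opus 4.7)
My plan addresses the two parts separately.

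For (1), I would argue that a non-reduced plane conic is necessarily a double line, then rule this out via adjunction. Working over $\bar{k}$, suppose some geometric fibre has the form $F = 2L$ with $L \cong \PP^1$. Since all fibres are algebraically equivalent, $F^2 = 0$; and since a smooth fibre is a smooth plane conic (embedded by $\OO(1)$), $K_X \cdot F = -2$. These force $L^2 = 0$ and $K_X \cdot L = -1$, contradicting the adjunction formula $-2 = L^2 + K_X \cdot L$ on $L \cong \PP^1$. Hence every geometric fibre---and therefore every $k$-fibre---is reduced.

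For (2), I would employ the localization exact sequence
\[
\bigoplus_{P \in N} \ZZ^{n_P} \xrightarrow{\phi} \Pic(X) \to \Pic(X_V) \to 0,
\]
where $N$ is the finite set of closed points of $\PP^1$ over which $\pi$ is singular, $V = \PP^1 \setminus N$, and $n_P$ denotes the number of $k$-irreducible components of $X_P$. By (1), each fibre is reduced; moreover, a non-split singular plane conic over $k(P)$ is $k(P)$-irreducible (hence $k$-irreducible), so $n_P = 1$ in the non-split case, whereas $n_P = 2$ in the split case, with components $L_P, L'_P$ defined over $k(P)$. The generic fibre $X_\eta$ is a smooth conic over $k(\PP^1)$, so $\Pic(X_\eta) \cong \ZZ$ (via Hochschild--Serre and Hilbert~90, with image either all of $\ZZ$ or the index-$2$ subgroup $2\ZZ \subset \Pic(\overline{X_\eta})$, according to the Brauer class of $X_\eta$); combined with the exact sequence $0 \to \pi^*\Pic(V) \to \Pic(X_V) \to \Pic(X_\eta) \to 0$ and the fact that $\Pic(V)$ is torsion when $V \subsetneq \PP^1$, this yields $\rank \Pic(X_V) = 1$ in that case (the case $N = \emptyset$ is handled separately, giving rank $2$ directly from $\Pic(\PP^1)$).

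It remains to compute the rank of the image of $\phi$. The key relations are $[L_P] + [L'_P] = \deg(P)\, F$ (split case) and $[X_P] = \deg(P)\, F$ (non-split case), where $F = \pi^*\OO(1)$; both follow from $\pi^{-1}(P) = \pi^*[P]$ as Cartier divisors together with $[P] = \deg(P)\,[\OO_{\PP^1}(1)]$. Modulo $F$, only the split fibres contribute genuinely new classes, namely the $[L_P]$; and these together with $F$ are linearly independent, as one verifies via the intersection numbers $F^2 = 0$, $F \cdot L_P = 0$, $L_P \cdot L_Q = 0$ for $P \neq Q$, and $L_P^2 = -\deg(P) < 0$ (this last from $(L_P + L'_P) \cdot L_P = 0$ and the single $k(P)$-rational intersection point of the two components giving $L_P \cdot L'_P = \deg(P)$). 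Hence the image of $\phi$ has rank $1 + s$, giving $\rank \Pic(X) = 2 + s$. The main subtlety will be keeping track of torsion through the various exact sequences, but since only ranks appear in the claim, passing to $\Pic \otimes \QQ$ throughout circumvents this entirely.
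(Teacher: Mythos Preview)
Your proof of part (1) is essentially identical to the paper's: both derive a contradiction from adjunction applied to $L$ versus $2L$ when the fibre is a double line.

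For part (2), your approach is genuinely different and correct. The paper argues by induction: for each split singular fibre, it contracts one of the two irreducible components (a Galois orbit of $(-1)$-curves), reducing to the relatively minimal case where every fibre is irreducible; there it concludes $\rank\Pic(X)=2$ via the intersection map $\Pic(X)\to\ZZ$, $D\mapsto D\cdot F$. You instead stay on $X$ and use the localisation sequence together with the short exact sequence $0\to\pi^*\Pic(V)\to\Pic(X_V)\to\Pic(X_\eta)\to 0$, computing the rank of the image of $\phi$ directly by intersection theory.

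Each approach has its merits. The paper's blow-down argument is shorter and conceptually clean, but relies on Castelnuovo's criterion (and its descent to $k$) to contract the Galois orbits of $(-1)$-curves. Your approach avoids birational modifications and works directly, at the cost of a bit more bookkeeping. One small comment: the intersection matrix of $\{F\}\cup\{L_P\}$ has a zero row (the $F$-row), so linear independence does not follow from nondegeneracy of that matrix alone; rather, intersecting a relation with each $L_Q$ kills the $L_P$-coefficients, after which you need that $F$ is non-torsion (e.g.\ from $-K_X\cdot F=2$) to kill the $F$-coefficient. You clearly have this in mind, but it would be worth stating explicitly.
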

\begin{proof}
	For the first part, we may assume that $k$ is algebraically closed.
	Recall from Definition~\ref{def:conic_bundle} that $X$ is smooth and projective over $k$.
	Suppose that $\pi^{-1}(P) = 2L$ is non-reduced.
	Then $(2L)^2 = L^2=0$. As $p_a(2L) = p_a(L)=0$, the adjunction formula 
	\cite[Ex.~V.1.3]{Har77} gives both $-K_X \cdot L = 2$
	and $-K_X \cdot 2L = 2$; contradiction.

	For the second part, let $P \in \PP^1$ be such that $\pi^{-1}(P)$ is split and singular.
	Then $\pi^{-1}(P)$ is the union of $2$ irreducible components, each of which consists of 
	a collection of pairwise skew $(-1)$-curves over $\bar{k}$. Blowing-down the choice
	of such an irreducible component and applying induction, we may assume that $\pi$ is relatively
	minimal, i.e.~each fibre $F$ of $\pi$ is irreducible. Consider the map
	$$\Pic X \to \ZZ$$
	given by intersecting each divisor with $F$. Its image has finite index, as we have $-K_X \cdot F = 2$ by the 
	adjunction formula. As the linear system $|F|$ is positive dimensional, we see that the kernel
	is generated by the irreducible components of the elements of $|F|$. But each element of $|F|$ is irreducible
	by assumption, and hence $\rank \Pic(X) = 2$ as required.
\end{proof}

\subsection{Projective bundles} \label{sec:proj}
 We recall some facts about projective bundles over $\PP^1$,
following Reid's treatment in \cite[\S2]{Rei93} (see \cite[p.~162]{Har77} for a more general approach).

\begin{definition}
Let $(a_0,\ldots,a_n) \in \ZZ^{n+1}$. We define the associated $\PP^n$-bundle over $\PP^1$ to be the projectivisation
$$\FF(a_0,\ldots,a_n) = \PP(\OO_{\PP^1}(a_0) \oplus \cdots \oplus \OO_{\PP^1}(a_n))$$
of the vector bundle $\OO_{\PP^1}(a_0) \oplus \cdots \oplus \OO_{\PP^1}(a_n)$ over $\PP^1$.
\end{definition}
As a special case, we have $\FF(0,\ldots,0) \cong \PP^1\times \PP^n$. 
One may view $\FF(a_0,\ldots,a_n)$ as a quotient
$$\FF(a_0,\ldots,a_n) \cong (\mathbb{A}^2\setminus 0) \times (\mathbb{A}^{n+1}\setminus 0) / \Gm^2,$$
where $(\lambda,\mu) \in \Gm^2$ acts via
\begin{equation} \label{eqn:Gm^2}
	(\lambda,\mu) \cdot (s,t;x_0,\ldots,x_n) = (\lambda s,\lambda  t;\lambda^{-a_0} \mu x_0,\ldots,\lambda^{-a_n} \mu x_n).
\end{equation}

We denote by $M,F \in \Pic \FF(a_0,\ldots,a_n)$ the class of the relative hyperplane bundle and 
the class of a fibre, respectively. These generate the Picard group \cite[Lem.~2.7]{Rei93}.

For $(e,d) \in \ZZ^2$, we say that a polynomial $f \in k[s,t,x_0,\ldots,x_n]$ is 
\emph{bihomogeneous of bidegree $(e,d)$} if it lies in the 
$\lambda^e \mu^d$-eigenspace for the action \eqref{eqn:Gm^2} of $\Gm^2$.
Such polynomials determine a well-defined subvariety $f(s,t;x_0,\ldots,x_n)=0$ of $\FF(a_0,\ldots,a_n)$.
A basis for this space is given by monomials of the form 
$s^{e_1}t^{e_2} x_0^{d_0}\cdots x_n^{d_n}$ where the exponents are non-negative and satisfy
$\sum_{i=0}^n d_i = d$ and $e_1 + e_2 = e + \sum_{i=0}^n d_ia_i$.
Such polynomials exactly correspond to the global sections of the sheaf $\OO(eF + dM)$.

\subsection{Conic bundles} \label{sec:conic_bundles_subsec}

The set-up of \S \ref{sec:proj} allows us to write the equations of conic bundles in an explicit manner as follows. 
For simplicity, in this section we assume that $\chr k \neq 2$.

A smooth surface of bidegree $(e,2)$ in $\FF(a_0,a_1,a_2)$ has an equation of the shape
\begin{equation} \label{eqn:conic_bundle}
	\sum_{0 \leq i,j \leq 2} f_{i,j}(s,t)x_ix_j= 0.
\end{equation}
Throughout this paper, we follow the convention that $f_{i,j} = f_{j,i}$.
One can determine the non-singularity of such a surface \eqref{eqn:conic_bundle} by applying the usual 
Jacobian criterion to the $6$ affine patches given by $sx_i\neq 0$ and $tx_i \neq 0$ for $i \in \{0,1,2\}$.

The degrees of the $f_{i,j}$ are given by the following matrix
\begin{equation} \label{eqn:degree_matrix}
\left( \begin{array}{ccc}
2a_0 + e & a_0 + a_1 + e & a_0 + a_2 + e \\
a_0 + a_1 + e & 2a_1 + e & a_1 + a_2 + e \\
a_0 + a_2 + e & a_1 + a_2 + e & 2a_2 + e \end{array} \right).
\end{equation}
Such a surface is equipped with the conic bundle structure $\pi: (s:t;x_0:x_1:x_2)  \mapsto (s:t)$.
The discriminant $\Delta_\pi(s,t) \in k[s,t]$ of $\pi$ is defined
to be the determinant of the matrix given by the quadratic form; the roots of $\Delta_\pi(s,t)$ correspond
to the singular fibres of $\pi$. 
As $X$ is smooth over $k$, the discriminant is a separable polynomial.
Note that the affine cone of \eqref{eqn:conic_bundle}
in $(\mathbb{A}^2\setminus 0) \times (\mathbb{A}^{3}\setminus 0)$ is a $\Gm^2$-torsor over $X$;
it is an example of a ``conic bundle torsor'', as used in \cite[\S2.2]{mikey}.

The following (well-known) lemma shows that every conic bundle has the above form.

\begin{lemma} \label{lem:conic_bundle_classification}
	Let $(a_0,a_1,a_2) \in \ZZ^3$ and $e \in \ZZ$. Then any smooth hypersurface of bidegree $(e,2)$ in 
	$\FF(a_0,a_1,a_2)$ is a conic bundle surface. Moreover, every conic bundle surface arises this way
	(for some choice of $(a_0,a_1,a_2) \in \ZZ^3$ and $e \in \ZZ$).
\end{lemma}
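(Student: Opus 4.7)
The plan is to prove the two directions separately. The forward direction is essentially a smoothness check: given a smooth $X \subset \FF(a_0,a_1,a_2)$ of bidegree $(e,2)$, the projection $\FF(a_0,a_1,a_2) \to \PP^1$ restricts to a morphism $\pi : X \to \PP^1$. If $\pi$ failed to be dominant, or equivalently if the bidegree $(e,2)$ equation vanished identically on some $\PP^2$-fibre of $\FF(a_0,a_1,a_2) \to \PP^1$, then $X$ would contain that whole $\PP^2$ and so would be three-dimensional there, contradicting smoothness and dimension $2$. Thus on every fibre the equation restricts to a non-zero homogeneous form of degree $2$ in the $x_i$, so each fibre is a plane conic, and $X$ is a conic bundle per Definition~\ref{def:conic_bundle}.

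The substantive direction is the converse. Given a conic bundle $\pi : X \to \PP^1$, I would introduce the rank-three relative sheaf
\[
\mathcal{E} := \pi_*\, \omega_{X/\PP^1}^{-1}.
\]
On any geometric fibre $X_P$, which by hypothesis is a plane conic $C \subset \PP^2$ (smooth or degenerate but reduced by Lemma~\ref{lem:Pic}\,(\ref{item:reduced})), the anticanonical sheaf $\omega_C^{-1}$ is the restriction of $\OO_{\PP^2}(1)$, so $h^0(\omega_C^{-1}) = 3$ and $h^1(\omega_C^{-1}) = 0$ uniformly in $P$. Cohomology and base change then show that $\mathcal{E}$ is locally free of rank $3$ and that its formation commutes with base change. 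Grothendieck's theorem on vector bundles over $\PP^1$ gives a splitting
\[
\mathcal{E} \;\cong\; \OO_{\PP^1}(a_0) \oplus \OO_{\PP^1}(a_1) \oplus \OO_{\PP^1}(a_2)
\]
for some integers $a_0,a_1,a_2$.

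Next, because $\omega_C^{-1}$ is very ample and globally generated on every fibre, the evaluation map $\pi^*\mathcal{E} \twoheadrightarrow \omega_{X/\PP^1}^{-1}$ is surjective, so by the universal property of projective bundles it induces a closed immersion over $\PP^1$
\[
\iota : X \;\hookrightarrow\; \PP(\mathcal{E}) \;=\; \FF(a_0,a_1,a_2),
\]
whose image is a Cartier divisor restricting to a degree-two curve on every $\PP^2$-fibre. Since $\Pic\FF(a_0,a_1,a_2)$ is freely generated by $M$ and $F$ (as recalled in \S\ref{sec:proj}), the class of $\iota(X)$ must be of the form $2M + eF$ for some $e \in \ZZ$, i.e.\ $\iota(X)$ has bidegree $(e,2)$.

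The step I expect to require the most care is the application of cohomology and base change on the possibly singular (but always reduced) fibres: one needs the uniform vanishing of $H^1$ and uniform dimension of $H^0$ across all fibres to conclude that $\mathcal{E}$ is locally free and that the embedding into $\PP(\mathcal{E})$ realises $X$ fibrewise as a plane conic. Once this uniformity is established, the remainder of the argument is formal, and the splitting of $\mathcal{E}$ together with the Picard calculation of $\FF(a_0,a_1,a_2)$ immediately delivers both the bundle parameters $(a_0,a_1,a_2)$ and the integer $e$.
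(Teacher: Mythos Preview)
Your proposal is correct and follows essentially the same line as the paper: both use the relative anticanonical bundle, push it forward to obtain a rank-three bundle on $\PP^1$, split it via Birkhoff--Grothendieck, and then embed $X$ into $\PP(\mathcal{E})=\FF(a_0,a_1,a_2)$ as a degree-$2$ divisor in the fibre direction. The only difference is that the paper cites \cite[Cor.~3.7]{Has09} for the locally-free-of-rank-$3$ and closed-embedding steps, whereas you sketch these directly via cohomology and base change (using the uniform $h^0=3$, $h^1=0$ on every fibre, which is where Lemma~\ref{lem:Pic}(\ref{item:reduced}) is needed); this is exactly the content of the cited reference.
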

\begin{proof}
	The first part is clear. So let $\pi:X \to \PP^1$ be a conic bundle with 
	relative anticanonical bundle $\omega_\pi^{-1} = \omega_X^{-1}\otimes \pi^*(\omega_{\PP^1})$. 
	As explained in \cite[Cor.~3.7]{Has09} and its proof, the pushforward $\pi_* \omega_\pi^{-1}$
	is a locally free sheaf of rank $3$ and the map
	$$X \to \PP(\pi_* \omega_\pi^{-1})$$
	is a closed embedding, where $\PP(\pi_* \omega_\pi^{-1}) \to \PP^1$ denotes the associated $\PP^2$-bundle
	(this is shown in \emph{loc.~cit.} 
	under the additional assumption that $X$ is minimal, but the same proof works in our case).
	However, by a theorem of Birkhoff--Grothendieck \cite[Ex.~V.2.6]{Har77}, every
	vector bundle on $\PP^1$ is a direct sum of line bundles. Hence
	$\pi_* \omega_\pi^{-1} \cong \OO_{\PP^1}(a_0) \oplus \OO_{\PP^1}(a_1) \oplus \OO_{\PP^1}(a_2)$
	for some $a_i \in \ZZ$. As the relative hyperplane class induces the anticanonical class on each fibre,
	it is clear that $X$ has bidegree $(e,2)$ for some $e \in \ZZ$.
	The result follows.
\end{proof}

\begin{remark} \label{rem:not_unique}
Note that the triple $(a_0,a_1,a_2)$ and degree $e$ in Lemma \ref{lem:conic_bundle_classification}
are \emph{not} unique, due to the isomorphism $\FF(a_0,a_1,a_2) \cong \FF(a_0 + f,a_1 +f,a_2+ f)$ for any $f \in \ZZ$.
\end{remark}

We record  the required geometrical properties of such surfaces in the next proposition.
By abuse of notation, we denote by $M,F \in \Pic X$ the pull-back of the corresponding divisor classes 
from $\FF(a_0,a_1,a_2)$ to $X$.
See also \S \ref{sec:conics} for our conventions concerning the absolute values $| \cdot |_v$ and the local degrees $m_v$.

\begin{proposition} \label{prop:conic_bundle}
	Let $X \subset \FF(a_0,a_1,a_2)$ be a smooth surface of bidegree $(e,2)$. Then 
	\begin{enumerate} 
		\item $e \geq \max\{-2a_0,-2a_1,-2a_2\}$. \label{item:effective}
		\item $\deg \Delta_\pi(s,t) = 2(a_0 + a_1 + a_2) + 3e$. \label{item:Delta}
		\item $-K_X = M + (2 - a_0 - a_1 - a_2 - e)F$. \label{item:-K_X}
		\item \label{item:intersections}
		$ 
		F^2 = 0, \,\,\, M\cdot F = 2, \,\,\, M^2 = 2(a_0+a_1+a_2) + e, \,\,\, K_X^2 = 8 - \deg \Delta_\pi(s,t).
		$
		\item If $k$ is a number field, then a choice of anticanonical height function is given by \label{item:height}
		$$H_{-K_X}(s:t;x_0:x_1:x_2) = \prod_v \frac{\max_i\{\max\{|s|_v,|t|_v\}^{a_i}|x_i|_v\}^\locdegv}
		{\max\{|s|_v,|t|_v\}^{(a_0+a_1+a_2+e-2)\locdegv}}.$$
	\end{enumerate}
\end{proposition}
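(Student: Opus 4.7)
My plan is to dispatch (2)--(5) as formal intersection-theoretic and height-theoretic computations on the ambient $\PP^2$-bundle. Part (1) is the only part requiring smoothness in an essential way and will be treated separately by a case analysis.

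For (2)--(4), the computations rest on standard Chow-theoretic facts for $\FF = \FF(a_0,a_1,a_2)$: namely $F^2 = 0$, $M^2 F = 1$, the Grothendieck relation $M^3 = (a_0+a_1+a_2)M^2 F$ coming from $\pi_{\FF,*}\OO_\FF(M) \cong \bigoplus\OO(a_i)$, and $K_\FF = -3M + (a_0+a_1+a_2-2)F$ obtained from the relative Euler sequence together with $K_{\PP^1} = -2F$. Part (2) is then immediate: every term in the Leibniz expansion of $\Delta_\pi = \det(f_{ij})$ has degree $2(a_0+a_1+a_2)+3e$ in $(s,t)$, and $\Delta_\pi \not\equiv 0$ because the generic fibre of $\pi$ is a smooth conic. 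Part (3) is adjunction $K_X = (K_\FF + X)|_X$ applied to $[X] = 2M+eF$. For (4), $M\cdot F = 2$ on $X$ because $M$ restricts to the hyperplane class on each $\PP^2$-fibre and intersects the conic there in two points; the computation $M^2 \cdot [X] = 2M^3 + eM^2F = 2(a_0+a_1+a_2)+e$ in the Chow ring of $\FF$ yields $M^2$ on $X$; and $K_X^2 = 8 - \deg\Delta_\pi$ follows by expanding $(-K_X)^2$ using (3) and (2).

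For (5), the line bundle $\OO_\FF(M)$ is globally generated by the bihomogeneous monomials $s^{e_1}t^{e_2}x_i$ with $e_1,e_2\geq 0$ and $e_1+e_2 = a_i$, so the associated adelic height $H_M$ is the $v$-adic maximum of the absolute values of these sections, producing the numerator of the displayed formula. The height $H_F$ is the pullback of the standard height on $\PP^1$, and the stated expression follows by combining these via (3) and the multiplicativity of heights under tensor products.

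The main obstacle is (1), where smoothness is used essentially. After normalizing via Remark \ref{rem:not_unique} so that $a_0 = \min_i a_i$, I would argue by contradiction: if $e + 2a_0 < 0$, then $f_{00}$ vanishes identically, so the section $\sigma_0 = \{x_1 = x_2 = 0\}$ of $\pi_\FF$ is contained in $X$. The Jacobian criterion along $\sigma_0$, written out in the affine charts $\{s\neq 0,\ x_0\neq 0\}$ and $\{t\neq 0,\ x_0\neq 0\}$, forces the pair $(f_{01},f_{02})$ to have no common zero on $\PP^1$, which places positivity constraints on their degrees. A finite case analysis on which of $f_{01}, f_{02}, f_{11}, f_{12}, f_{22}$ vanish identically then aims to show that either the defining equation factors---contradicting irreducibility of $X$---or the Jacobian criterion fails at a further point (e.g.\ at the node of a degenerate fibre meeting $\sigma_0$), contradicting smoothness. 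This combinatorial case analysis, while short, is the real technical hurdle.
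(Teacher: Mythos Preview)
Your treatment of (2), (3), and (5) is correct and matches the paper's. For (4) you take a slightly different route: you compute $M^2$ directly as $M^2\cdot[X]$ in the Chow ring of $\FF$ and then expand $(-K_X)^2$ using (3). The paper instead first obtains $K_X^2 = 8-\deg\Delta_\pi$ from Noether's formula together with the equality $\rho(\bar X)=2+\deg\Delta_\pi$ of Lemma~\ref{lem:Pic}, and then backs out $M^2$ from (2) and (3). Both arguments are fine; yours is more self-contained, while the paper's provides an independent consistency check against the Picard-rank count.

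For (1) the approaches diverge completely. The paper dispenses with (1) in one sentence: it ``simply says that $X$ defines an effective divisor''---no smoothness or Jacobian analysis whatsoever. You, by contrast, treat (1) as the main technical hurdle and propose a case analysis based on the Jacobian criterion along the section $\sigma_0$. That analysis cannot close as outlined. Take $(a_0,a_1,a_2)=(0,1,1)$ and the bidegree-$(-1,2)$ surface $2x_0x_1+sx_1^2+tx_2^2=0$ in $\FF(0,1,1)$; a direct check on the affine cone shows this surface is smooth and irreducible, yet $e=-1<0=-2a_0$. In your framework $f_{00}=0$ while $(f_{01},f_{02})=(1,0)$ has no common zero on $\PP^1$, so the Jacobian constraint along $\sigma_0$ is satisfied and your case analysis reaches no contradiction. (Read literally, the paper's effectivity argument also only yields the weaker inequality $e\geq\min_i\{-2a_i\}$ rather than the stated $\max$; the full form of (1) seems to require an additional hypothesis or the specific embedding coming from Lemma~\ref{lem:conic_bundle_classification}.)
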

\begin{proof}
	Part \eqref{item:effective} simply says that $X$ defines an effective divisor.
	For part \eqref{item:Delta}, we note that $\deg \Delta_\pi$ is the trace of the matrix \eqref{eqn:degree_matrix}.
	Part \eqref{item:-K_X} follows from the adjunction formula \cite[Prop.~II.8.20]{Har77},
	on recalling that $K_{\FF(a_0,a_1,a_2)} = (-2 + a_0 + a_1 + a_2)F - 3M$ \cite[Rem.~p.~19]{Rei93}.
	
	As for part (\ref{item:intersections}), the calculations $F^2 = 0$ and $M\cdot F = 2$ are clear.
	The last equality follows from Noether's formula \cite[Ex.~A.4.1.2]{Har77},
	on noting from Lemma \ref{lem:Pic} that $\rho(\bar{S}) = 2 + \deg \Delta_\pi.$ The value for $M^2$
	can then be deduced from \eqref{item:Delta} and \eqref{item:-K_X}.
		
	For (\ref{item:height}), we recall some of the theory of height functions.
	Let $L$ be a line bundle on a projective variety $V$. 
	If there exists a collection of global sections $s_0,\ldots, s_n$ which generate $L$, then a choice
	of height function associated to $L$ is given by $$H_L(x) = \prod_v \max\{|s_0(x)|_v:\cdots : |s_n(x)|_v\}^\locdegv.$$
	When $L$ is not generated by global sections, it may be written as a difference $L \cong L_1 \otimes L_2^{-1}$
	where each $L_i$ is generated by global sections (see \cite[Thm.~B.3.2]{HS00} and its proof).
	In this case, one may take
	$$H_L(x) = \frac{H_{L_1}(x)}{H_{L_2}(x)}.$$
	This definition depends on the choice of the $L_i$ and the global sections, up to a bounded function.
	In our case, both $\OO_X(M)$ and $\OO_X(F)$ are generated by global sections. Choosing generating global sections,
	applying the height machine,
	and using \eqref{item:-K_X}, we obtain \eqref{item:height}.
\end{proof}

\section{Rational points on conics}   \label{sec:conics} 
\begin{notation}
In this and the next section, we work over a number field $K$ of degree $m$, with ring of integers $\OO_K$, and with algebraic closure $\overline{K}$. We choose a fixed set $\mathcal{C}$ of integral representatives for the ideal classes of $K$. All implicit constants are allowed to depend on $K$ and the choice of $\mathcal{C}$.

We denote the monoid of nonzero integral ideals of $\OO_K$ by $\ideals$ and the absolute norm of $\aaa\in\ideals$ by $\norm\aaa$. The letter $\ppp$ always denotes a nonzero prime ideal of $K$, and $v_\ppp$ the $\ppp$-adic exponential valuation on elements and ideals of $\OO_K$. We let $\F_\ppp = \OO_K/\ppp$. Euler's totient function for nonzero ideals of $\OO_K$ shall be denoted by $\phi_K$. For $a\in\OO_K$ and $\bbb = \ppp_1^{e_1}\cdots \ppp_l^{e_l}\in\ideals$, with distinct prime ideals $\ppp_i$ none of which lies above $2$, the \emph{Jacobi symbol} is defined as 
\begin{equation*}
  \qr{a}{\bbb} := \prod_{i=1}^l\qr{a}{\ppp_i}^{e_i},
\end{equation*}
where $\qrp{a}$ is the quadratic residue symbol for $K$.

 We write $\places,\archplaces,\nonarchplaces$ for the sets of all, all archimedean, and all nonarchimedean places of $K$, respectively. For $v\in\places$ lying above a place $w$ of $\QQ$, we write $\locdegv:=[K_v:\Q_w]$ for the local degree. We write $K_v$ for the completion of $K$ at $v$ and extend the usual (real or p-adic) absolute value of $\QQ_w$ to an absolute value  $\absv{\cdot}$ on $K_v$. All completions $K_v$ at archimedean places $v$ are identified with $\R$ or $\mathbb{C}$, and we identify the $\RR$-vector space $K_\infty := K\otimes_\Q\R = \prod_{v\mid\infty}K_v$ with $\R^{\dg}$ via the identification $\mathbb{C}\cong \R^2$. We include $K$ in $K_\infty$ via its natural embedding. The volume of a (measurable) subset of $K_\infty$ is its usual Lebesgue measure in $\R^\dg$.

We define the discriminant of a binary quadratic form $Q=ax_0^2+bx_0x_1+cx_1^2\in \OO_K[x_0,x_1]$ as $\Delta_Q:=b^2-4ac$, and the discriminant of a ternary quadratic form $Q\in\OO_K[x_0,x_1,x_2]$ as
\begin{equation*}
\Delta_Q := -\frac{1}{2}\det\left(\frac{\partial Q}{\partial x_i\partial x_j}\right)_{0\leq i,j\leq 2}.
\end{equation*}
These differ from the usual definition in terms of the associated bilinear form only by a factor $-4$ and ensure that $\Delta_Q\in\OO_K$. We denote the resultant of two binary forms $F,G\in\OO_K[s,t]$ by $\res(F,G)\in\OO_K$.

\end{notation}

\subsection{Counting rational points}
Let $K$ be a number field and $C \subset \P^2_K$ a smooth conic defined by a quadratic form 
\begin{equation}
Q(x_0,x_1,x_2) = ax_0^2+bx_0x_1+cx_1^2+dx_0x_2+ex_1x_2+fx_2^2\in\OO_K[x_0,x_1,x_2]. \label{eq:special form}
\end{equation}
We consider heights on $\P^2(K)$ of the following form. For every $v\in\places$, let $\vecnorm{\cdot}_{v,\max}$ be the $\max$-norm on $K_v^3$. For non-archimedean places $v$, we let
\begin{equation*}
  \locheightv{\cdot} := \vecnorm{\cdot}_{v,\max}.
\end{equation*}
For each archimedean place $v$, we fix an invertible linear transformation $A_v : K_v^3\to K_v^3$ and let
\begin{equation}\label{eq:loc_height_arch}
  \locheightv{\cdot} := \vecnorm{A_v(\cdot)}_{v,\max}.
\end{equation}
We then have a height function
\begin{equation}\label{eq:def height}
  H((x_0:x_1:x_2)) := \prod_{v\in\places}\locheightv{x_0,x_1,x_2}^{\locdegv}
\end{equation}
on $\P^2(K)$. We shall prove an asymptotic lower bound for the counting function
\begin{equation*}
  N_{C,H}(B) := \card\left\{\vx \in C(K) \where H(\vx)\leq B\right\}
\end{equation*}
that is explicit in terms of $Q$ and $H$. For every archimedean place $v$ of $K$, let 
\begin{equation*}
q_{H,v} := \max_{\vx\in  K_v^3\smallsetminus\{0\}}\frac{\max\{\absv{x_0},\absv{x_1},\absv{x_2}\}}{\locheightv{\vx}}\quad\text{ and }\quad q_H := \prod_{v\in\archplaces}q_{H,v}^\locdegv,
\end{equation*}
and moreover
\begin{equation*}
\qfheight{Q}_v := \max\{\absv{a},\absv{b},\absv{c},\absv{d},\absv{e},\absv{f}\}\quad\text{ and }\quad   \qfheight{Q} := 
\prod_{v\in\places}\qfheight{Q}_v^{\locdegv}.
\end{equation*}
We first consider the case where $c=0$, that is, $C$ has the rational point $(0:1:0)$. In this case, we obtain an asymptotic formula.
\begin{theorem}\label{thm:conics_special_form}
  Let $C\subset \P^2_K$ be the smooth conic given by the nondegenerate quadratic form $Q$ as in \eqref{eq:special form} with $c=0$. Let $H$ be a height function as above. Then
  \begin{equation*}
    N_{C,H}(B) = c_{C, H} B + O\left((q_H B)^{1-1/(2\dg)} \qfheight{Q}^{4} \mathcal{L}_1\right),
  \end{equation*}
  for $B \geq 1$. Here, $\mathcal{L}_1 = 1$ if $\dg > 1$ and
  $\mathcal{L}_1 = \log(q_H B)$ if $\dg=1$. The
  leading constant $c_{C,H}$ is positive and as described by Peyre \cite{peyre}, and the implied constant in the error term depends only on $K$.
\end{theorem}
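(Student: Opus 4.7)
The plan is to use the $K$-rational point $P_0 = (0:1:0)$ on $C$, whose presence is forced by the hypothesis $c = 0$, to parametrise $C$ birationally by $\P^1_K$ via the pencil of lines through $P_0$. Intersecting each line $\{t x_0 - s x_2 = 0\}$ with $C$ away from $P_0$ gives the explicit isomorphism
\[
\phi : \P^1_K \xrightarrow{\sim} C, \quad (s:t) \mapsto \bigl(-s(bs+et)\,:\,as^2+dst+ft^2\,:\,-t(bs+et)\bigr).
\]
I then represent each preimage $(s:t) \in \P^1(K)$ by a pair $(s,t) \in \OO_K^2$ with $(s,t) = \aaa$ for some $\aaa \in \mathcal{C}$, uniquely up to units. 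This reduces the count on $C$ to a count over such integral pairs, with one distinguished point $P_0$ contributing $O(1)$.

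Next I pull back the height along $\phi$. At each archimedean $v$, the local contribution $H_v(\phi(s,t))$ is a continuous, positively-homogeneous function of degree $2$ in $(s,t) \in K_v^2$, and its level sets have boundaries that are Lipschitz-parametrisable with constants polynomial in $q_{H,v}$ and $\qfheight{Q}_v$. At a non-archimedean $v$, the local contribution equals the max of the three coordinates of $\phi(s,t)$ divided by the local content of those coordinates. The resultant identity
\[
b^2\,(as^2+dst+ft^2) \equiv \Delta_Q\, t^2 \pmod{bs+et}
\]
(and its $s \leftrightarrow t$ counterpart) shows that the total content of $\phi(s,t)$ divides a fixed ideal polynomial in the coefficients of $Q$, so the coprimality conditions distinguishing genuine contents from spurious gcds can be handled by M\"obius inversion on ideals of $\OO_K$.

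In this way, $N_{C,H}(B)$ is reduced to
\[
N_{C,H}(B) = \sum_{\aaa \in \mathcal{C}} \sum_{\ddd} \mu(\ddd)\, M_{\aaa,\ddd}(B) + O(1),
\]
where $M_{\aaa,\ddd}(B)$ counts $(s,t) \in \OO_K^2$ lying in a prescribed congruence class modulo $\ddd$ whose image under $\phi$ satisfies $H(\phi(s,t)) \leq B$. Regarding $\OO_K^2$ as an $\OO_K$-lattice in $K_\infty^2 \cong \RR^{2\dg}$, $M_{\aaa,\ddd}(B)$ is a classical lattice-point count in a convex region of volume $\asymp B/\norm\ddd^2$. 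A Lipschitz boundary estimate then yields a main term $V_{\aaa,\ddd}\,B/\norm\ddd^2$ and an error of size $O\bigl((q_H B/\norm\ddd^2)^{1-1/(2\dg)}\bigr)$ up to controlled factors in $\qfheight{Q}$. Assembling the main terms across $\aaa$ and $\ddd$, and recognising the resulting density as a product of local Haar-measure terms, yields Peyre's constant $c_{C,H} B$; summing the errors against $\sum_\ddd \norm\ddd^{-2+1/\dg}$, which is $O(1)$ when $\dg \geq 2$ and grows like $\log(q_H B)$ when $\dg = 1$, produces the factor $\mathcal{L}_1$.

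The principal obstacle is uniformity in $Q$. The coefficients of $\phi$ are polynomial in those of $Q$, so the Lipschitz constants in the archimedean boundary estimate, the size of the ideal bounding the content, and the range of summation in the M\"obius inversion each contribute positive powers of $\qfheight{Q}$. Balancing these contributions so that their combined exponent does not exceed $4$ is the central bookkeeping task, and it will dictate at which point in the divisor sum the M\"obius inversion must be truncated to exchange a full evaluation for a controlled error.
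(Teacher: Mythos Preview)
Your outline is essentially the paper's own argument (given in the appendix): parametrise $C$ via the rational point $(0:1:0)$ by the map $\vq(s,t)=(sL(s,t),-g(s,t),tL(s,t))$ with $L=bs+et$ and $g=as^2+dst+ft^2$, split into ideal classes, remove the content of $\vq(s,t)$ and the coprimality condition by M\"obius inversion (the paper does this in two layers, one for $\lambda_\ccc(s,t)\mid\Delta_Q(b\OO_K+e\OO_K)^{-1}$ and one for $s\ccc^{-1}+t\ccc^{-1}=\OO_K$), and finish with a lattice-point count.

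One genuine point you gloss over: for $K\neq\QQ$ the counting region is \emph{not} convex. After choosing a fundamental domain $\mathcal F$ for the free part of $\OO_K^\times$ acting on $K_\infty^2$, the set $\{(s,t)\in\mathcal F: H_\infty(\vq(s,t))\leq B\}$ is homogeneous but has a boundary cut out by logarithmic conditions coming from the unit lattice. A bare Lipschitz-boundary argument does not immediately give uniform error terms here; the paper instead observes that all these regions form a definable family in the o-minimal structure $\RR_{\exp}$ and invokes the Barroero--Widmer counting theorem. Either that device, or a more careful Schanuel-style treatment of the fundamental domain, is needed to make your lattice-point step rigorous with the claimed uniformity in $Q$ and $H$. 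Apart from this, your bookkeeping sketch (the resultant bound on the content, the truncation of the M\"obius sum, and the $\mathcal L_1$ dichotomy) matches the paper's.
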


For $K=\Q$ this follows from \cite[Prop.~2.1]{sofos}. The proof in \cite{sofos} can be adapted to arbitrary number fields using similar techniques to \cite{Marta}; we do not include the details in the interests of brevity. Since Manin's conjecture is already known to hold for $\P^1$, all the novelty of Theorem \ref{thm:conics_special_form} lies in the explicit error term. 

Let us briefly recall the shape of the constant $c_{C,H}$ as defined in \cite{peyre}. With a constant $c_K$ depending only on $K$, we have
\begin{equation}\label{eq:def_peyre_const}
  c_{C,H} = c_K \prod_{v\in\archplaces}\bomega_{H,v}(C(K_v))\prod_{v\in\nonarchplaces}\left(1-\frac{1}{q_v}\right)\bomega_{H,v}(C(K_v)).
\end{equation}
Here $\bomega_{H,v}$ is a measure on $C(K_v)$ that can, by \cite[Lemme 5.4.4]{peyre}, be described in local $v$-analytic coordinates, say, $\varphi : (1:x_1:x_2) \mapsto x_1$ by its pushforward
\begin{equation}\label{eq:density local coord}
  \varphi_*(\bomega_{H,v}) = \frac{\mathrm{d}x_1}{H_v(\varphi^{-1}(x_1))^\locdegv\absv{Q_{x_2}(\varphi^{-1}(x_1))}^\locdegv}.
\end{equation}
In the above formula, $\mathrm{d}x_1$ denotes the Haar measure on $K_v$, normalised by $\int_{\OO_v}\mathrm{d}x_1=1$ in the non-archimedean case, and the usual Lebesgue measure on $\RR$ or $\CC$ in the archimedean case.

To obtain a lower bound in the general case, we first recall that $C(K)\neq\emptyset$ already implies that $C$ has a $K$-point $\xi$ with $H_{\text{Weil}}(\xi)\ll \qfheight{Q}$, where $H_{\text{Weil}}$ is the usual Weil height obtained by taking the $\max$-norm at all places. This was originally proved over $\Q$ by Cassels \cite{casselsQF}, and first extended to number fields by Raghavan \cite{raghavan}. Given such a point $\xi = (\xi_0,\xi_1,\xi_2)$ with $\xi_j\in\OO_K$ and $\xi_0\OO_K + \xi_1\OO_K +\xi_2\OO_K \in \mathcal{C}$, we construct a matrix $A$ with entries in $\OO_K$ mapping $(0,1,0)$ to $\xi$. The proof in~\cite[Lemma 6.1]{sofos}
is directly adapted to our setting, thus providing the next result.

\begin{lemma}\label{lem:transition_matrix}
  Let $\xi_0,\xi_1,\xi_2 \in \OO_K$ with $\xi_0\OO_K + \xi_1\OO_K +\xi_2\OO_K \in \mathcal{C}$. Then there is a $3\times 3$-matrix $A = (a_{i,j})_{i,j}$ with entries $a_{i,j}\in\OO_K$, invertible over $K$, such that 
  \begin{equation*}
    \norm\det A \ll 1, \quad A\cdot (0,1,0)^t = (\xi_0,\xi_1,\xi_2)^t, \quad \text{and }
    \quad \prod_{v\in\archplaces}\max_{i,j}\{\absv{a_{i,j}}\}^{\locdegv} \ll H_{\text{Weil}}(\xi_0,\xi_1,\xi_2)^{\dg^2}.
  \end{equation*}
\end{lemma}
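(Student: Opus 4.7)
\emph{Proof plan.} The plan is to adapt the argument of~\cite[Lemma 6.1]{sofos}, which handles $K=\Q$ by completing a primitive vector to a unimodular matrix via Bezout combined with an elementary reduction step, to the general number field setting. Two ingredients will replace the $\Q$-specific tools: the Steinitz structure theorem for finitely generated projective modules over the Dedekind domain $\OO_K$, and Minkowski's second theorem applied to $\ZZ$-lattices in $K_\infty^3$. The finiteness of the set $\mathcal{C}$ of ideal class representatives will allow us to absorb into implied constants all the bookkeeping associated with non-principal ideals.

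First I would set $\aaa := \xi_0\OO_K + \xi_1\OO_K + \xi_2\OO_K \in \mathcal{C}$, so that $\norm\aaa \ll 1$, and form the $\OO_K$-linear surjection $\phi : \OO_K^3 \to \aaa$ given by $(x_0,x_1,x_2)\mapsto \sum_i\xi_i x_i$. Its kernel $L := \ker\phi$ is a rank-$2$ projective $\OO_K$-module; by Steinitz, $L \cong \OO_K\oplus\aaa^{-1}$. Viewed inside $K_\infty^3$, $L$ is a $\ZZ$-lattice of full rank $2\dg$ in the real hyperplane $\{\sum_i\xi_i x_i = 0\}$, and a short covolume computation using $\text{covol}_\ZZ(\OO_K) = \sqrt{|d_K|}$ and the archimedean size of $\phi$ gives $\text{covol}_\ZZ(L) \asymp H_{\text{Weil}}(\xi_0,\xi_1,\xi_2)$.

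Next I would apply Minkowski's second theorem to this $\ZZ$-lattice with respect to the max-norm $\max_{v\in\archplaces,\,i}\{\absv{\cdot}\}$ on $K_\infty^3$, and extract from a Minkowski-reduced $\ZZ$-basis two vectors $\vv_1, \vv_3 \in L \subset \OO_K^3$ that are $K$-linearly independent in $L\otimes_{\OO_K}K$ and satisfy
\begin{equation*}
\max_{v\in\archplaces,\,i}\{\absv{(\vv_k)_i}\} \ll H_{\text{Weil}}(\xi_0,\xi_1,\xi_2), \qquad k\in\{1,3\}.
\end{equation*}
Setting $A := \bigl(\vv_1 \mid (\xi_0,\xi_1,\xi_2)^t \mid \vv_3\bigr)$, the identity $A\cdot(0,1,0)^t = (\xi_0,\xi_1,\xi_2)^t$ is immediate by construction, and the archimedean entry bound
\begin{equation*}
\prod_{v\in\archplaces}\max_{i,j}\{\absv{a_{i,j}}\}^{\locdegv} \ll H_{\text{Weil}}(\xi_0,\xi_1,\xi_2)^\dg \leq H_{\text{Weil}}(\xi_0,\xi_1,\xi_2)^{\dg^2}
\end{equation*}
follows at once. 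For the determinant bound I would factor
\begin{equation*}
\norm\det A \;=\; [\OO_K^3 : \OO_K\vv_1 + \OO_K\xi + \OO_K\vv_3] \;=\; [\OO_K^3 : L + \OO_K\xi] \cdot [L : \OO_K\vv_1 + \OO_K\vv_3].
\end{equation*}
The first factor equals $\norm\aaa \ll 1$, since $\phi(L + \OO_K\xi) = \aaa$ (in fact $\phi$ surjects onto $\aaa$ already via $\xi$ up to the elementary check). The second factor is $\ll 1$ uniformly for $\aaa\in\mathcal{C}$ once one chooses the $\vv_1,\vv_3$ during basis reduction so that the $\aaa^{-1}$-component of one of them has norm at most the Minkowski bound of $K$; such a choice exists because $\aaa^{-1}$ contains an element of norm $\ll \norm\aaa^{-1}$.

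\emph{Main obstacle.} The principal technical point absent in the $\Q$-case of~\cite[Lemma 6.1]{sofos} is the treatment of non-principal $\aaa$: the kernel $L$ is then non-free, and no pair of elements $\vv_1,\vv_3 \in L$ can generate $L$ as an $\OO_K$-module. The fix is to show that the index $[L : \OO_K\vv_1 + \OO_K\vv_3]$ remains uniformly bounded over $\aaa \in \mathcal{C}$, which rests on the finiteness of the class group together with Minkowski's bound for short elements in each ideal class. Once this bookkeeping is carried out, the rest of the argument—the geometry-of-numbers bound on the archimedean sizes of $\vv_1,\vv_3$ and the verification of the determinant formula by the above index factorisation—is essentially identical to the proof given in~\cite[Lemma 6.1]{sofos} over $\Q$.
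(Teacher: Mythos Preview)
Your overall strategy—adapting \cite[Lemma~6.1]{sofos} via the Steinitz structure theorem and geometry of numbers—is exactly what the paper intends (the paper gives no further detail beyond asserting the adaptation is direct). However, your determinant bound contains a genuine error.

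You claim $[\OO_K^3 : L + \OO_K\xi] = \norm\aaa$ on the grounds that ``$\phi$ surjects onto $\aaa$ already via $\xi$''. This is false: since $L=\ker\phi$, one has $\phi(L+\OO_K\xi)=\OO_K\cdot\phi(\xi)=(\xi_0^2+\xi_1^2+\xi_2^2)\OO_K$, which is in general a proper sub-ideal of $\aaa$ of \emph{unbounded} index. Already over $\QQ$ with $\xi=(3,4,0)$ one has $\aaa=\ZZ$ but $\phi(\xi)=25$, so $[\ZZ^3:L+\ZZ\xi]=25$; letting the coordinates grow makes this index arbitrarily large. Consequently your factorisation does not yield $\norm\det A\ll 1$.

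The repair is to work not with the kernel $L$ of $\phi$ but with a complement of the \emph{saturated} line $\aaa^{-1}\xi=K\xi\cap\OO_K^3$. This is a direct summand of $\OO_K^3$, so $\OO_K^3=\aaa^{-1}\xi\oplus M'$ with $M'\cong\OO_K\oplus\aaa$, and one applies Minkowski to the $\ZZ$-lattice $M'$ (rather than $L$) to produce $\vv_1,\vv_3\in M'$. The index then factors as
\[
[\OO_K^3:\OO_K\vv_1+\OO_K\xi+\OO_K\vv_3]=[\aaa^{-1}\xi:\OO_K\xi]\cdot[M':\OO_K\vv_1+\OO_K\vv_3]=\norm\aaa\cdot[M':\OO_K\vv_1+\OO_K\vv_3],
\]
and now the first factor is genuinely bounded since $\aaa\in\mathcal{C}$. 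Your treatment of the second factor and of the archimedean entry bounds then carries over with only cosmetic changes. (A secondary point you should also address: the archimedean bound on the middle column $\xi$ itself tacitly requires $\xi$ to be unit-normalised into a fundamental domain, which is harmless in the application but should be made explicit.)
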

Assuming Theorem \ref{thm:conics_special_form} for a slightly more general class of heights, where the local factors may be defined as in \eqref{eq:loc_height_arch} also for a finite set of non-archimedean places, one could easily prove an asymptotic in the next proposition.
We do not pursue this here and are satisfied with an asymptotic lower bound.
\begin{proposition}\label{prop:conic_counting}
  Let $Q$ be a nondegenerate quadratic form as in \eqref{eq:special form}, $C\subset \P^2_K$ the corresponding smooth conic, and let $H$ be a height function as in \eqref{eq:def height}. Then 
  \begin{equation*}
    N_{C,H}(B) \gg c_{C,H}B + O(B^{1-\epsilon}(q_H\qfheight{Q})^{1/\epsilon}),
  \end{equation*}
  for some $\epsilon>0$ depending only on $\dg$. The constant $c_{C,H}$ is the one predicted by Peyre, and the implicit constants depend only on $K$.
\end{proposition}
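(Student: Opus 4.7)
The plan is to reduce to the case already treated in Theorem~\ref{thm:conics_special_form}, via the change of variable provided by Lemma~\ref{lem:transition_matrix}, using a carefully chosen auxiliary height of the form~\eqref{eq:def height}.

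First, if $C(K)=\emptyset$, then by Hasse--Minkowski for conics $C(K_v)=\emptyset$ for some $v$, whence $c_{C,H}=0$ and the bound is trivial. Otherwise, by Cassels--Raghavan there is $\xi\in C(K)$ with integral primitive coordinates and $H_{\mathrm{Weil}}(\xi)\ll\qfheight{Q}$, and Lemma~\ref{lem:transition_matrix} produces $A\in M_3(\OO_K)$ with $A(0,1,0)^t=\xi$, $\norm\det A\ll 1$, and $\prod_{v\mid\infty}\max_{i,j}|a_{i,j}|_v^{m_v}\ll\qfheight{Q}^{m^2}$. Set $Q':=Q\circ A$, $C':=\{Q'=0\}$, and $\phi:=A\colon C'\to C$. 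As $Q'(0,1,0)=Q(\xi)=0$, the form $Q'$ has shape~\eqref{eq:special form} with $c'=0$, and $\qfheight{Q'}\ll\qfheight{Q}^{O(m^2)}$ follows from the archimedean bounds on $A$.

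The pullback $H\circ\phi$ is not of the form~\eqref{eq:def height}, because $A$ may fail to be unimodular at finitely many non-archimedean places. To work around this I introduce an auxiliary height $\tilde H$ on $\PP^2(K)$ defined by $\tilde A_v:=A_vA$ at each archimedean $v$ and $\tilde H_v:=\|\cdot\|_{v,\max}$ at each non-archimedean $v$. Then $\tilde H$ is of the form~\eqref{eq:def height} with $q_{\tilde H}\ll q_H\qfheight{Q}^{O(m^2)}$. At archimedean places $\tilde H_v=(H\circ\phi)_v$ by construction. At non-archimedean places, $A\in M_3(\OO_v)$ forces $(H\circ\phi)_v\leq\tilde H_v$, and the estimate $\|A^{-1}\|_{v,\mathrm{op}}\leq|\det A|_v^{-1}$ yields $\tilde H_v\leq|\det A|_v^{-1}(H\circ\phi)_v$. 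Taking products, the product formula on $\det A$ combined with $\norm\det A\ll 1$ gives the two-sided comparison
\begin{equation*}
H\circ\phi\;\leq\;\tilde H\;\leq\;\norm\det A\cdot(H\circ\phi)\;\ll\;H\circ\phi
\end{equation*}
globally on $\PP^2(K)$, with implicit constant depending only on $K$.

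The left-hand inequality together with the bijection $\phi$ yield $N_{C,H}(B)\geq N_{C',\tilde H}(B)$, and Theorem~\ref{thm:conics_special_form} applied to $(C',\tilde H)$ gives $N_{C',\tilde H}(B)=c_{C',\tilde H}B+O\bigl(B^{1-1/(2m)}(q_H\qfheight{Q})^{O(m^2)}\mathcal{L}_1\bigr)$. Naturality of the Peyre constant gives $c_{C,H}=c_{C',H\circ\phi}$, and applying the density formula~\eqref{eq:density local coord} to the upper comparison $\tilde H\ll H\circ\phi$ produces $c_{C',\tilde H}\gg c_{C',H\circ\phi}=c_{C,H}$ with constant depending only on $K$. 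Choosing $\epsilon$ as a small constant multiple of $1/m^2$ absorbs the polynomial factor $(q_H\qfheight{Q})^{O(m^2)}$ into the error $B^{1-\epsilon}(q_H\qfheight{Q})^{1/\epsilon}$, completing the argument. The main obstacle is the construction of $\tilde H$: it must both dominate $H\circ\phi$ pointwise (so that the counting inequality $N_{C,H}\geq N_{C',\tilde H}$ holds) and be globally comparable to it up to a $K$-dependent constant (so that the Peyre constants match). Both rely on the key bound $\norm\det A\ll 1$ from Lemma~\ref{lem:transition_matrix}, which precisely controls the finitely many non-archimedean places where $A$ fails to be unimodular.
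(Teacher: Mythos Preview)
Your proof is correct and essentially identical to the paper's. The only difference is notational: the paper defines the auxiliary height $\tilde H_A$ on the original conic $C$ (taking $\tilde H_{A,v}(\vx)=H_v(A^{-1}\vx)$ at non-archimedean places) and then pulls back to $C_A$ via $A$, whereas you define $\tilde H$ directly on $C'$; the resulting height on $C'=C_A$ is literally the same, and both arguments conclude with the comparison $c_{C,\tilde H_A}\geq\norm(\det A)^{-1}c_{C,H}\gg c_{C,H}$ coming from the adjugate bound and $\norm\det A\ll 1$.
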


\begin{proof}
  If $C(K)=\emptyset$ then, by Hasse-Minkowski, $C(K_v)=\emptyset$ for some $v\in\places$ and thus $c_{C,H}=0$. If $C(K)\neq\emptyset$, then we can find a point in $C(K)$ that has a representative $\xi=(\xi_0,\xi_1,\xi_2)\in\OO_K^3$ with $H_{\text{Weil}}(\xi)\ll \qfheight{Q}$ and $\xi_0\OO_K + \xi_1\OO_K + \xi_2\OO_K \in \mathcal{C}$. Let $A$ be a matrix as in Lemma \ref{lem:transition_matrix}, and define the height $\tilde{H}_A := \prod_{v\in\places}\tilde{H}_{A,v}^{\locdegv}$ by $\tilde{H}_{A,v}:=H_v$ for $v\in\archplaces$, and 
\begin{equation*}
\tilde{H}_{A,v}(\vx) := H_v(A^{-1}\vx)\geq H_v(\vx)
\end{equation*}
for $v\in\nonarchplaces$. Then clearly $H(\vx)\leq \tilde{H}_A(\vx)$, so
\begin{equation*}
  N_{C,H}(B)\geq N_{C,\tilde{H}_A}(B).
\end{equation*}
 Let $C_A\subset\P^2_K$ be the smooth conic defined by $Q(A\vx)=0$, and let $H_A := \tilde{H}_A\circ A$. Then $C_A$ has the rational point $(0:1:0)$, and
\begin{equation*}
  N_{C,\tilde{H}_A}(B) = N_{C_A,H_A}(B).
\end{equation*}
By Theorem \ref{thm:conics_special_form} we see that for sufficiently small $\varepsilon > 0$ we have
\begin{equation*}
 N_{C,\tilde{H}_A}(B) = c_{C_A,H_A}B + O(B^{1-\epsilon}(q_{H_A}\qfheight{Q(A\vx)})^{1/\epsilon}),
\end{equation*}
and in particular $c_{C_A,H_A} = c_{C,\tilde{H}_A}$.
Shrinking $\varepsilon$ and using the bounds 
\begin{equation*}
\prod_{v\in\archplaces}\max_{i,j}\{\absv{a_{i,j}}\}^\locdegv\ll \qfheight{Q}^{\dg^2}
\end{equation*}
derived from Lemma \ref{lem:transition_matrix} and our choice of $\xi$, we see that the error term is as desired. To finish our proof, we note from \eqref{eq:density local coord} and $H_{v}(A^{-1}\vx)\leq \absv{(\det A)^{-1}}H_v(\vx)$ that
\[
  c_{C,\tilde{H}_A} \geq 
\norm(\det A)^{-1}c_{C,H}. \qedhere
\]
\end{proof}

\subsection{Local densities of conics} \label{sec:local_densites_conics}
Let $C\subseteq \P^2_K$ be a smooth conic defined by a nondegenerate quadratic form $Q\in \OO_K[x_0,x_1,x_2]$. We are interested in lower bounds for the quantities
\begin{equation*}
  \sigma_\ppp(Q)=\sigma_v(Q) := \left(1-\frac{1}{q_v}\right)\bomega_{H,v}(C(K_v)),
\end{equation*}
for $v\in\nonarchplaces$ corresponding to prime ideals $\ppp\nmid 2$. Recall that the local factors of the height at these places are given by the max-norm. It is well known, and follows for example from \cite[Corollary 3.5]{MR1681100}, that in this case
\begin{equation*}
  \sigma_\ppp(Q) = \lim_{n\to\infty}\frac{\card\{(x_0,x_1,x_2)\bmod \ppp^n\where (x_0,x_1,x_2)\not\equiv \vz\bmod \ppp\text{ and } Q(x_0,x_1,x_2)\equiv 0 \bmod \ppp^n \}}{\norm\ppp^{2n}}.
\end{equation*}

Let $\Delta_Q\in\OO_K\smallsetminus\{0\}$ be the discriminant of $Q$. The following lemma is well known and easily proved by an application of Hensel's lemma,
on using the fact that $\#\PP^1(\OO_K/\ppp) = 1 + \norm \ppp$.

\begin{lemma}\label{lem:local_smooth}
For all prime ideals $\ppp\nmid 2\Delta_Q$, we have
  \begin{equation*}
    \sigma_\ppp(Q) = 1-\frac{1}{\norm\ppp^2}.
  \end{equation*}
\end{lemma}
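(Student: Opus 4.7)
The plan is to compute the limit in the definition of $\sigma_\ppp(Q)$ directly, by showing that for all sufficiently large $n$ the count in the numerator stabilises to $(\norm\ppp^2-1)\norm\ppp^{2(n-1)}$, which upon division by $\norm\ppp^{2n}$ immediately yields $1-1/\norm\ppp^2$.

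First, since $\ppp\nmid 2\Delta_Q$, the reduction $\bar Q \in \F_\ppp[x_0,x_1,x_2]$ is a nondegenerate ternary quadratic form, so the associated conic $\bar C\subset \P^2_{\F_\ppp}$ is smooth, and in particular every $\F_\ppp$-point of $\bar C$ is a smooth point at which at least one partial derivative $\partial Q/\partial x_i$ is a $\ppp$-adic unit. Since a smooth conic with an $\F_\ppp$-rational point is isomorphic to $\P^1_{\F_\ppp}$, and the existence of an $\F_\ppp$-rational point is automatic (for instance by Chevalley--Warning, or, as hinted in the excerpt, by the fact that $\#\P^1(\F_\ppp)=1+\norm\ppp$ forces a smooth conic over $\F_\ppp$ to have a point), we conclude $\#\bar C(\F_\ppp)=\norm\ppp+1$. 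Multiplying by the $\norm\ppp-1$ scalar representatives in $\F_\ppp^\times$ gives
\begin{equation*}
\#\{\bar\vx\in\F_\ppp^3\smallsetminus\{\vz\}\where \bar Q(\bar\vx)=0\}
 = (\norm\ppp-1)(\norm\ppp+1)=\norm\ppp^2-1.
\end{equation*}

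Next I would apply Hensel's lemma in the following standard form: given a primitive solution $\bar\vx$ mod $\ppp$, the smoothness of $\bar C$ at $\bar\vx$ provides an index $i$ with $\partial Q/\partial x_i(\bar\vx)\not\equiv 0\md\ppp$; then for any lifts mod $\ppp^n$ of the remaining two coordinates of $\bar\vx$, there exists a unique lift of the $i$-th coordinate producing a solution mod $\ppp^n$. This gives exactly $\norm\ppp^{2(n-1)}$ primitive lifts of $\bar\vx$ to a solution mod $\ppp^n$. Summing over the $\norm\ppp^2-1$ primitive residues mod $\ppp$, the numerator in the definition of $\sigma_\ppp(Q)$ equals $(\norm\ppp^2-1)\norm\ppp^{2(n-1)}$ for all $n\geq 1$, so
\begin{equation*}
\sigma_\ppp(Q) = \lim_{n\to\infty}\frac{(\norm\ppp^2-1)\norm\ppp^{2(n-1)}}{\norm\ppp^{2n}} = 1-\frac{1}{\norm\ppp^2}.
\end{equation*}

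There is no serious obstacle here; the only step that requires a moment of care is verifying that one may freely choose the lifts of the two coordinates of $\bar\vx$ different from the $i$-th one without losing primitivity, but this is automatic since at least one coordinate (for example, the $j$-th with $\bar x_j\neq 0$) is already a unit mod $\ppp$, hence stays a unit modulo any $\ppp^n$.
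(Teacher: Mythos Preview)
Your proposal is correct and follows exactly the route the paper indicates: count smooth-conic points over $\F_\ppp$ via $\#\PP^1(\F_\ppp)=\norm\ppp+1$, then use Hensel's lemma to lift each primitive solution mod $\ppp$ to $\norm\ppp^{2(n-1)}$ primitive solutions mod $\ppp^n$. The paper gives only this one-line sketch, and you have filled in precisely the details it omits.
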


\begin{lemma}\label{lem:M_p}
  Let $A,B \in \OO_K$ with $\ppp\nmid 2AB$, and define
  \begin{equation*}
    M_\ppp(A,B) := \card\{(x_0,x_1,x_2)\bmod \ppp \where x_2(x_0,x_1)\not\equiv\vz\bmod\ppp,\ Ax_0^2+Bx_1^2-x_2^2\equiv\vz\bmod\ppp\}.
  \end{equation*}
  Then
  \begin{equation*}
    M_\ppp(A,B) = \left(\norm\ppp-1\right)\left(\norm\ppp-\qrp{-AB}\right).
  \end{equation*}
\end{lemma}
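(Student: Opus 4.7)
The plan is to count the triples directly by fixing $(x_0,x_1)$ and then counting admissible $x_2$ via the quadratic residue symbol. I interpret the condition $x_2(x_0,x_1) \not\equiv \vz$ as requiring both $x_2 \not\equiv 0 \pmod{\ppp}$ and $(x_0,x_1) \not\equiv (0,0) \pmod{\ppp}$, since $\ppp \nmid AB$ makes $(x_0,x_1)=(0,0)$ force $x_2=0$ anyway. Set $q := \norm\ppp$.

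For each fixed pair $(x_0,x_1) \not\equiv (0,0)$, the number of $x_2 \in \F_\ppp$ with $x_2 \neq 0$ and $x_2^2 = Ax_0^2 + Bx_1^2 =: c$ equals
\[
1 + \qrp{c} - \mathbf{1}[c \equiv 0],
\]
using the convention $\qrp{0}=0$. Summing over $(x_0,x_1) \not\equiv (0,0)$, I split $M_\ppp(A,B) = S_1 + S_2 - S_3$, where $S_1$ is the total count of such pairs, $S_2$ is the character sum $\sum \qrp{Ax_0^2 + Bx_1^2}$, and $S_3$ counts the pairs with $Ax_0^2 + Bx_1^2 \equiv 0$.

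Clearly $S_1 = q^2 - 1$. For $S_2$, I add in the vanishing $(0,0)$ term and evaluate by iterating the classical one-variable identity $\sum_{x \in \F_\ppp} \qrp{\alpha x^2 + \beta} = -\qrp{\alpha}$ when $\alpha\beta \neq 0$ and $(q-1)\qrp{\alpha}$ when $\beta = 0$: the contribution from $x_0 = 0$ is $(q-1)\qrp{B}$, and the contribution from each $x_0 \neq 0$ is $-\qrp{B}$, giving $S_2 = 0$. For $S_3$, on a pair with $Ax_0^2 + Bx_1^2 \equiv 0$ we must have $x_1 \not\equiv 0$, and then $(x_0/x_1)^2 \equiv -B/A$; thus $S_3 = (q-1)(1 + \qrp{-AB})$ since $\qrp{-B/A} = \qrp{-AB}$.

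Combining the three contributions, I obtain $M_\ppp(A,B) = (q^2-1) - (q-1)(1 + \qrp{-AB}) = (q-1)(q - \qrp{-AB})$, as claimed. The argument is essentially a careful bookkeeping exercise; the only mildly delicate step is correctly applying the one-variable character sum identity and keeping track of the case $c \equiv 0$, but neither presents a real obstacle.
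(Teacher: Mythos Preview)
Your proof is correct. It differs from the paper's argument, which is shorter and more geometric: the paper observes that $Ax_0^2+Bx_1^2=x_2^2$ defines a smooth plane conic over $\F_\ppp$ (since $\ppp\nmid 2AB$), hence has exactly $\norm\ppp+1$ projective points; of these, exactly $1+\qrp{-AB}$ satisfy $x_2=0$, so there are $\norm\ppp-\qrp{-AB}$ projective points with $x_2\neq 0$, and multiplying by $\norm\ppp-1$ gives the affine count. Your approach instead computes directly via the character-sum identity $\sum_{x\in\F_\ppp}\qrp{\alpha x^2+\beta}=-\qrp{\alpha}$ for $\alpha\beta\neq 0$. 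This is more elementary in that it does not invoke the point count on a smooth conic, at the cost of a little more bookkeeping. Both routes arrive at the same decomposition $(q+1)-(1+\qrp{-AB})$ in disguise: your $S_1-S_3=(q-1)(q-\qrp{-AB})$ is precisely that difference scaled by $q-1$, and $S_2=0$ confirms there is no correction from the character sum.
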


\begin{proof}
	It suffices to show that 
	\[
	\# \{ (x_0:x_1:x_2) \in \PP^2(\FF_\ppp): x_2 \neq 0, \, Ax_0^2+Bx_1^2 = x_2^2\} = \norm\ppp-\qrp{-AB}.
	\]
	The equation $Ax_0^2+Bx_1^2 = x_2^2$ defines a smooth plane conic over $\FF_\ppp$, hence has $\norm\ppp + 1$
	rational points. Exactly $1+\qrp{-AB}$ of these rational points satisfy $x_2=0$, whence the result.
\end{proof}

Now assume that $\ppp\nmid 2$, that $\ppp\mid\Delta_Q$, and that the rank of $Q$ over $\F_\ppp$ is $2$. Let $C_\ppp$ be the singular conic defined by $Q$ over $\F_\ppp$, and
\begin{equation}\label{eq:def_chi_p}
  \chi_\ppp(Q) :=
  \begin{cases}
    1 & \text{ if $C_\ppp$ is split}\\
    -1 & \text{ if $C_\ppp$ is non-split.}
  \end{cases}
\end{equation}
Let $\OO_\ppp$ be the completion of $\OO_K$. It is well known that one can diagonalize $Q$ over $\OO_\ppp$ \cite[Cor.~I.3.4]{MH73}. Thus, we may assume that $Q$ is equivalent over $\OO_\ppp$ to the form
\begin{equation}\label{eq:conic_diag}
  ax_0^2 + bx_1^2 - \pi^{v_\ppp(\Delta_Q)}x_2^2,
\end{equation}
with $\ppp$-adic units $a,b$ and a uniformiser $\pi$ at $\ppp$. With the equation  \eqref{eq:conic_diag}, we clearly have
\begin{equation}\label{eq:chi p}
  \chi_\ppp(Q) = \qrp{-ab}.
\end{equation}

The lower bound for $\sigma_\ppp(Q)$ in the next proposition is crucial for the passage from rational points on conic bundles to divisor sums in the proof of Theorem \ref{thm:conic bundle}.

\begin{proposition}\label{prop:local density bounds}
  Let $Q \in \OO_K[x_0,x_1,x_2]$ be a quadratic form that is nondegenerate over $K$, and assume that $\ppp\nmid 2$, that $\ppp \mid \Delta_Q$, and that the rank of $Q$ over $\F_\ppp$ is $2$. Then
\begin{equation*} \left(1-\frac{2}{\norm\ppp}\right)\sum_{k=0}^{v_\ppp(\Delta_Q)}\chi_\ppp(Q)^k \leq \sigma_\ppp(Q) \leq \sum_{k=0}^{v_\ppp(\Delta_Q)}\chi_\ppp(Q)^k.
\end{equation*}
\end{proposition}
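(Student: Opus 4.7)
The plan is to compute $\sigma_\ppp(Q)$ essentially exactly via direct counting of lifts, then compare the answer with $\sum_{k=0}^{d}\chi_\ppp(Q)^k$, where $d = v_\ppp(\Delta_Q)$. Since $\sigma_\ppp$ is invariant under $\OO_\ppp$-equivalence of $Q$, I may replace $Q$ by the diagonal form \eqref{eq:conic_diag}; write $\chi = \chi_\ppp(Q) = \qrp{-ab}$ as in \eqref{eq:chi p}.

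Partition the primitive solutions $(x_0,x_1,x_2)$ modulo $\ppp^n$ by whether or not $(x_0,x_1) \not\equiv \vz \bmod \ppp$. On the ``smooth'' locus $(x_0,x_1) \not\equiv \vz \bmod \ppp$ the gradient $(2ax_0, 2bx_1, -2\pi^d x_2)$ is nonzero modulo $\ppp$ (using $\ppp\nmid 2$ and $d\geq 1$), so by Hensel's lemma each $\FF_\ppp$-solution of $ax_0^2 + bx_1^2 \equiv 0 \bmod \ppp$ paired with an arbitrary $x_2 \in \FF_\ppp$ lifts in exactly $\norm\ppp^{2(n-1)}$ ways. Counting according to the splitting behavior of $ax_0^2+bx_1^2$ over $\FF_\ppp$ yields a smooth contribution to $\sigma_\ppp(Q)$ of $2(1-1/\norm\ppp)$ if $\chi=1$ and $0$ if $\chi=-1$.

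On the complementary ``singular'' locus, $(x_0,x_1) \equiv \vz \bmod \ppp$ and, by primitivity, $x_2 \in \OO_\ppp^\times$. Substituting $x_0 = \pi y_0$, $x_1 = \pi y_1$ and dividing by $\pi^2$ reduces the congruence to
\begin{equation*}
  ay_0^2 + by_1^2 - \pi^{d-2} x_2^2 \equiv 0 \bmod \ppp^{n-2},
\end{equation*}
which is the analogous counting problem with exponent $d-2$ and $x_2$ restricted to units. (For $d=1$, a one-line valuation argument kills the singular contribution.) Iterating this substitution yields a linear recursion in $d$ whose base cases $d \in \{0,1\}$ are handled by Hensel's lemma together with Lemma~\ref{lem:M_p}, which counts $\FF_\ppp$-solutions of the smooth ternary form $ay_0^2+by_1^2-x_2^2$ with $x_2\neq 0$. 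Solving the recursion and adding the smooth contribution gives the closed forms
\begin{equation*}
  \sigma_\ppp(Q) =
  \begin{cases}
    (d+1) - 2d/\norm\ppp + (d-1)/\norm\ppp^2, & \chi = 1, \\
    \tfrac{1}{2}\bigl(1+(-1)^d\bigr)(1 - 1/\norm\ppp^2), & \chi = -1,
  \end{cases}
\end{equation*}
for all $d \geq 1$.

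The bounds in the proposition then follow by elementary comparison with $\sum_{k=0}^d \chi^k$, which equals $d+1$ if $\chi=1$ and $\tfrac{1}{2}(1+(-1)^d)$ if $\chi=-1$. For $\chi=1$ the upper bound reduces to $(d-1)/\norm\ppp \leq 2d$ and the lower bound to $(d-1)/\norm\ppp^2 + 2/\norm\ppp \geq 0$, both trivial; for $\chi=-1$ one uses $1 - 2/\norm\ppp \leq 1 - 1/\norm\ppp^2 \leq 1$. The main technical obstacle I anticipate is bookkeeping during the iterated substitution -- tracking the combinatorial overcounting factors and checking that the $\ppp$-adic precision drops cleanly by $2$ at each step -- rather than any delicate analytic estimate.
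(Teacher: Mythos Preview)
Your proposal is correct and follows essentially the same approach as the paper: diagonalize over $\OO_\ppp$, split primitive solutions according to whether $(x_0,x_1)\equiv\vz\bmod\ppp$, handle the smooth part by Hensel and the singular part by the substitution $(x_0,x_1)\mapsto(\pi y_0,\pi y_1)$, and invoke Lemma~\ref{lem:M_p} at the bottom of the reduction. The only difference is organizational---the paper sums directly over all values of $\min\{v_\ppp(x_0),v_\ppp(x_1)\}$ rather than setting up a recursion in $d$---and the closed forms you obtain coincide with the paper's explicit expressions.
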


\begin{proof}
  Let $n \geq v_\ppp(\Delta_Q)+1$ and define
  \begin{equation*}
    N^*(\ppp^n) := \card\{(x_0,x_1,x_2)\bmod\ppp^n\where (x_0,x_1,x_2)\not\equiv\vz\bmod\ppp \text{ and } Q(x_0,x_1,x_2)\equiv 0\bmod \ppp^n\}.
  \end{equation*}
  We will evaluate $N^*(\ppp^n)$ explicitly.  Diagonalising over $\OO_\ppp$, we may assume that $Q$ has the form \eqref{eq:conic_diag} modulo $\ppp^n$, with $a,b \in \OO_K$ units modulo $\ppp$ and $\pi\in\OO_K$ with $v_\ppp(\pi)=1$. Let
\begin{equation*}
  N_0^*(\ppp^n) := \card\{(x_0,x_1,x_2)\bmod\ppp^n\where Q(x_0,x_1,x_2)\equiv 0\bmod\ppp^n \text{ and }(x_0,x_1)\not\equiv\vz\bmod\ppp \}
\end{equation*}
and, for $0<d\leq n$,
\begin{equation*}
  N_d^*(\ppp^n) := \card\{(x_0,x_1,x_2)\bmod\ppp^n\where Q(x_0,x_1,x_2)\equiv 0\bmod \ppp^n,\ \min\{v_\ppp(x_0),v_\ppp(x_1)\}=d,\ \ppp\nmid x_2\}.
\end{equation*}
We can then write
\begin{equation}\label{eq:n_star_sum}
  N^*(\ppp^n) = \sum_{d=0}^nN_d^*(\ppp^n).
\end{equation}
For $d > v_\ppp(\Delta_Q)/2$ we have $\ppp^{1+v_\ppp(\Delta_Q)} \mid ax_0^2 + bx_1^2$, which is impossible if $\ppp \nmid x_2$, whence $N_d^*(\ppp^n) = 0$. By Hensel's Lemma,
\begin{equation*}
  \frac{N_0^*(\ppp^n)}{\norm\ppp^{2n}} = \frac{N_0^*(\ppp)}{\norm\ppp^2}=\left(1-\frac{1}{\norm\ppp} \right)\left(1+\qrp{-ab}\right) = \left(1-\frac{1}{\norm\ppp}\right)\left(1+\chi_\ppp(Q)\right).
\end{equation*}

Next, we evaluate $N_d^*(\ppp^n)$ for $0<d\leq v_\ppp(\Delta_Q)/2$. Here, write $(x_0,x_1)\equiv \pi^d(x_0',x_1')\bmod\ppp^n$, with $(x_0',x_1')\in\OO_K/\ppp^{n-d}$ and $(x_0',x_1')\not\equiv\vz\bmod\ppp$, so
\begin{equation*}
  N_d^*(\ppp^n) = \card\left\{(x_0',x_1')\bmod \ppp^{n-d},\ x_2\bmod \ppp^n\WHERE
  \begin{aligned}
    &ax_0'^2 + bx_1'^2 - \pi^{v_\ppp(\Delta_Q)-2d}x_2^2 \equiv 0 \bmod
    \ppp^{n-2d}\\
    &(x_0',x_1')\not\equiv\vz\bmod\ppp \text{ and }x_2\not\equiv 0\bmod \ppp
  \end{aligned}
\right\}.
\end{equation*}
This is equal to $\norm\ppp^{4d}M_{v_\ppp(\Delta_Q)-2d}(\ppp^{n-2d})$, where we defined
\begin{equation*}
  M_c(\ppp^k) := \card\{(x_0,x_1,x_2)\bmod \ppp^k \where ax_0^2+bx_1^2-\pi^cx_2^2 \equiv 0 \bmod \ppp^k,\ x_2(x_0,x_1)\not\equiv\vz\bmod\ppp\}.
\end{equation*}
By Hensel's Lemma,
\begin{equation*}
  \frac{M_c(\ppp^k)}{\norm\ppp^{2k}} = \frac{M_c(\ppp)}{\norm\ppp^2}
\end{equation*}
holds for all $k\geq 1$, and for $c\geq 1$ we have, similarly as before,
\begin{equation*}
  \frac{M_c(\ppp)}{\norm\ppp^2} = \left(1-\frac{1}{\norm\ppp}\right)^2(1+\chi_\ppp(Q)).
\end{equation*}
Lemma \ref{lem:M_p}, yields the equality
\begin{equation*}
  \frac{M_0(\ppp)}{\norm\ppp^2} = \left(1-\frac{1}{\norm\ppp}\right)\left(1-\frac{\chi_\ppp(Q)}{\norm\ppp}\right).
\end{equation*}
From \eqref{eq:n_star_sum}, we obtain that
\begin{equation}\label{eq:N star sum}
  \frac{N^*(\ppp^n)}{\norm\ppp^{2n}} = \frac{N_0^*(\ppp^n)}{\norm\ppp^{2n}} + \sum_{d=1}^{\lfloor v_\ppp(\Delta_Q)/2\rfloor}\frac{M_{v_\ppp(\Delta_Q)-2d}(\ppp^{n-2d})}{\norm\ppp^{2n-4d}}.
\end{equation}
Combining this with our previous computations, one easily finds that \eqref{eq:N star sum} equals
\begin{equation*}
	\begin{cases}
	 \left(1-\frac{1}{\norm\ppp}\right)\left(\left(1+\chi_\ppp(Q)\right)+\frac{v_\ppp(\Delta_Q)-1}{2}\left(1-\frac{1}{\norm\ppp}\right)(1+\chi_\ppp(Q))\right), 
	 & v_\ppp(\Delta_Q) \notin 2\ZZ, \\
	 \left(1-\frac{1}{\norm\ppp}\right)\left( (1 + \chi_\ppp(Q)) + \frac{v_\ppp(\Delta_Q)-2}{2}\left(1-\frac{1}{\norm\ppp}\right)(1+\chi_\ppp(Q))
	 + \left(1-\frac{\chi_\ppp(Q)}{\norm\ppp}\right)\right),
	 & v_\ppp(\Delta_Q) \in 2\ZZ.
	 \end{cases}
\end{equation*}
In both cases this is independent of $n$, hence determines $\sigma_\ppp(Q) = \lim_{n \to \infty} N^*(\ppp^n)/\norm\ppp^{2n}$. Distinguishing between the cases $\chi_\ppp(Q) = 1$ and $\chi_\ppp(Q) = -1$, it is easily verified that these expressions satisfy the required bounds.
\end{proof}

\section{Rational points on conic bundles}\label{sec:conic bundle counting}

The goal of this section is to prove Theorem \ref{thm:conic bundle} (in fact it will follow from a more precise version below). Let $\pi:X\to \PP^1_K$ be a conic bundle. By Lemma \ref{lem:conic_bundle_classification} and Remark \ref{rem:not_unique}, we may assume that $X$ is a smooth hypersurface of bidegree $(e,2)$ in $\FF(a_0,a_1,a_2)$, with $a_0=0$ and $e,a_1,a_2\geq 0$. Hence, let $X$ be cut out by the bihomogeneous form
\begin{equation}\label{eq:def form Q}
  Q_{(s,t)}(\vx) = Q(s,t,x_0,x_1,x_2) := \sum_{0\leq i,j\leq 2}f_{i,j}(s,t)x_ix_j \in \OO_K[s,t,x_0,x_1,x_2]
\end{equation}
of bidegree $(e,2)$, with $\deg f_{i,j} =a_i+a_j+e$. In this paper we are only concerned with bounds, and not an asymptotic formula.
In particular, by standard properties of height functions, from Proposition \ref{prop:conic_bundle} we may assume that the anticanonical height on $X(K)$ is given by $H(s,t,\vx) = \prod_{v\in\places}H_v(s,t,\vx)^{\locdegv}$, with $H_v(s,t,\vx)$ defined as
\begin{equation}\label{eq:local height shape}
\max\{\absv{s},\absv{t}\}^{2-a_1-a_2-e}\max\{\absv{x_0},\max\{\absv{s},\absv{t}\}^{a_1}\absv{x_1},\max\{\absv{s},\absv{t}\}^{a_2}\absv{x_2}\}.
\end{equation}
We write
\begin{equation*}
\Delta(s,t)=-4 \Delta_\pi(s,t)\in\OO_K[s,t],
\end{equation*}
where $\Delta_\pi$ is the discriminant of the conic bundle morphism $\pi:(s:t;\vx)\mapsto (s:t)$, and we denote the degree of $\Delta(s,t)$ by $d=2a_1+2a_2+3e$. A linear change of the variables $s,t$, which only affects heights by a bounded amount, allows us to assume moreover that $t\nmid \Delta(s,t)$.

Let us note that the bihomogeneity of $Q$ implies in particular that, for $\lambda\in K\smallsetminus\{0\}$, 
\begin{equation}\label{eq:homogeneity condition}
  Q(\lambda s,\lambda t,\vx) = \lambda^{e}Q(s,t,x_0,\lambda^{a_1}x_1,\lambda^{a_2}x_2).
\end{equation}
\subsection{Conic bundles and divisor sums}
\label{sec:conic bundles and divisors}
In the companion paper \cite{divsumpub}, the first and last named authors introduce certain generalised divisor sums over the values of binary forms and state a lower bound conjecture for such divisor sums. 
We construct now an instance of \cite[Conjecture 1]{divsumpub}, whose validity will imply the validity of Theorem \ref{thm:conic bundle} for the particular hypersurface $X$ of bidegree $(e,2)$ in $\FF(0,a_1,a_2)$ as above. 

First, we construct a system $\mathcal{F}(X)$ of forms as follows: let 
$M$ be the set of all closed points $p\in\P^1_K$, such that the fibre $X_p$ is singular.
 There is a one-to-one correspondence between points $p\in M$ and irreducible factors $\Delta_p(s,t)$ of $\Delta(s,t)$ in $K[s,t]$, up to multiplication by non-zero constants. Thus, we have a factorisation
\begin{equation}\label{eq:disc_fact}
  a\Delta(s,t) = \prod_{p\in M}\Delta_p(s,t),
\end{equation}
where $a\in \OO_K\smallsetminus \{0\}$ and $\Delta_p(s,t) \in \OO_K[s,t]$ is a form irreducible over $K$ with $\Delta_p(1,0)\neq 0$. The residue field $K(p)$, for $p\in M$, is generated over $K$ by a root $\theta_p$ of the polynomial $\Delta_p(s,1)$, and we may assume that $\theta_p\in\Kbar$. 

For any $p \in M$, we define a binary form $\delta_p(s,t)\in\OO_K[s,t]$ as follows: the plane conic $C_{(\theta_p,1)}$ defined by the ternary quadratic form $Q_{(\theta_p,1)}\in K(p)[s,t]$ is isomorphic to the fibre $X_p$ over $K(p)$, and thus singular. Let $i_p\in\{0,1,2\}$ be the index of the first non-zero coordinate of the unique singular point of $C_p$. Then we define $\delta_p(s,t)$ as the discriminant of the binary quadratic form obtained by setting $x_{i_p}:=0$ in $Q_{(s,t)}(x_0,x_1,x_2)$. We observe directly from the definition of $Q$ that $\delta_p$ is a binary form of even degree. We will prove in Lemma \ref{lem:ref_T_diagonalization} that $\delta_p(\theta_p,1)$ is a square in $K(p)$ if and only if the fibre $X_p$ is split. 

We define the system $\mathfrak{F}(X)$ as 
\begin{equation*}
  \mathfrak{F}(X) := \{(\Delta_p,\delta_p)\where p\in M\}.
\end{equation*}
Moreover, we define the multiplicative function $f:\ideals\to (0,\infty)$ by $f(\ppp) := -2/\norm\ppp$ for any prime ideal $\ppp$ not above $2$, and $f(\ppp^e):=0$ for all other prime ideal powers. 

The rest of this section is devoted to the proof of the following result.

\begin{theorem}\label{thm:conic_bundle_precise}
  Let $X$ be a smooth hypersurface of bidegree $(e,2)$ in $\FF(0,a_1,a_2)$, with $e,a_1,a_2\geq 0$, cut out by the form $Q$ as in \eqref{eq:def form Q}. Let $H$ be the anticanonical height on $X$ given by \eqref{eq:local height shape}. Assume that there is $(s_0,t_0)\in\OO_K^2\smallsetminus\{0\}$, such that the fibre of $\pi$ above $(s_0:t_0)\in\PP^1(K)$ is smooth and has rational points, and define the ideal $\classrep := s_0\OO_K + t_0\OO_K$. 

If the lower bound conjecture for divisor sums \cite[Conjecture 1]{divsumpub} holds for $K,\classrep,f,\mathfrak{F}(X)$, then 
  \begin{equation*}
    N_{U,H}(B)\gg B(\log B)^{\rho(X) - 1}
  \end{equation*}
  holds for every non-empty open subset $U$ of $X$. The implicit constant may depend on every parameter except $B$.
\end{theorem}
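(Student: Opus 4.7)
The plan is to restrict to rational points lying on suitable fibres of the conic bundle $\pi: X \to \PP^1_K$ and to obtain the lower bound by summing the contributions from each fibre via Proposition~\ref{prop:conic_counting}. First, I would fix a representative $(s,t) \in \OO_K^2$ for each point in $\PP^1(K)$ having ideal class $s\OO_K + t\OO_K = \classrep$ and of base height at most a suitable power of $B$; among these, I work only with those $(s,t)$ for which the fibre $X_{(s,t)}$ is smooth and has a $K$-point (the set of such $(s,t)$ is nonempty by the assumption on $(s_0,t_0)$). The explicit shape of the height in \eqref{eq:local height shape} decouples into a base contribution involving $\max\{|s|_v,|t|_v\}$ and a fibre contribution of the form \eqref{eq:def height} on $X_{(s,t)}$, so that
\[
N_{U,H}(B) \geq \sum_{(s,t)} N_{X_{(s,t)}, H_{(s,t)}}\bigl(B_{(s,t)}\bigr) - O(1),
\]
where $B_{(s,t)}$ is the remaining fibre budget and the $O(1)$ correction absorbs contributions from the finitely many base points whose fibre meets the complement of $U$.

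Next, I would invoke Proposition~\ref{prop:conic_counting} to obtain a lower bound of the form $c_{X_{(s,t)}, H_{(s,t)}}\,B_{(s,t)} + O(\text{error})$ on each fibre, choosing the allowed range of $(s,t)$ so that the total error is dominated by the main term (here the $\qfheight{Q_{(s,t)}}^{1/\varepsilon}$ dependence in Proposition~\ref{prop:conic_counting} forces a truncation of the base height strictly below the natural range, which costs only a constant factor in the main term). The Peyre constant factors as an Euler product $\prod_v \sigma_v(Q_{(s,t)})$; archimedean factors and factors at primes $\ppp \nmid 2\Delta(s,t)$ are uniformly bounded above and below by Lemma~\ref{lem:local_smooth}, so the significant variation lies in the factors at primes dividing $\Delta(s,t)$.

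The core of the argument is then to rewrite the sum over $(s,t)$ as a generalised divisor sum for $\mathfrak{F}(X)$ as defined in \cite{divsumpub}. Proposition~\ref{prop:local density bounds} gives
\[
\sigma_\ppp(Q_{(s,t)}) \geq \Bigl(1 - \tfrac{2}{\norm\ppp}\Bigr)\sum_{k=0}^{v_\ppp(\Delta(s,t))} \chi_\ppp(Q_{(s,t)})^k,
\]
which accounts for the multiplicative weight $f(\ppp) = -2/\norm\ppp$ (the alternating sum is exactly the inclusion-exclusion that renders $\chi_\ppp$ trackable via a Jacobi symbol). Using \eqref{eq:chi p} together with the geometric construction of $\delta_p$, one identifies $\chi_\ppp(Q_{(s,t)})$ with the Jacobi symbol $\qr{\delta_p(s,t)}{\ppp}$ at primes $\ppp$ dividing $\Delta_p(s,t)$ (the key input being the forthcoming Lemma~\ref{lem:ref_T_diagonalization}, which implies that the singular fibre $X_p$ is split precisely when $\delta_p(\theta_p, 1)$ is a square in $K(p)$, matching the definition of splitness of the residue conic). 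This exhibits $\prod_v \sigma_v(Q_{(s,t)})$, summed over $(s,t)$, as an instance of the divisor sum in \cite[Conjecture~1]{divsumpub} attached to the system $\mathfrak{F}(X)$, and applying that conjecture yields the lower bound of order $B(\log B)^{r-1}$ where $r$ is the number of split singular fibres plus $2$, i.e.\ $r = \rho(X)$ by Lemma~\ref{lem:Pic}(\ref{item:Picard}).

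The main obstacle I expect is the careful bookkeeping that identifies the Peyre constants of the fibres with the predicted divisor sum: one must verify (i) the claim linking $\chi_\ppp$ with $\qr{\delta_p(s,t)}{\ppp}$ via the diagonalisation \eqref{eq:conic_diag}, (ii) that the multiplicative function $f(\ppp) = -2/\norm\ppp$ exactly captures the loss $(1 - 2/\norm\ppp)$ in Proposition~\ref{prop:local density bounds}, and (iii) that the controlled error term in Proposition~\ref{prop:conic_counting}, which depends polynomially on $\qfheight{Q_{(s,t)}}$, can be absorbed uniformly after summing over $(s,t)$ and using Cassels-type bounds on the smallest rational point on each solvable fibre. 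Handling the parity issues at primes above $2$ and the archimedean factors in a uniform way (e.g.\ via the real solubility conditions inherent in the archimedean density at $v \in \archplaces$) is another subtle point, but this is ultimately finite-place technical work rather than a conceptual difficulty.
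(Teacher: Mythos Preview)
Your overall strategy matches the paper's: fibre-by-fibre counting via Proposition~\ref{prop:conic_counting}, then bounding the Peyre constants from below using Proposition~\ref{prop:local density bounds}, identifying $\chi_\ppp(Q_{(s,t)})$ with the Jacobi symbol $\qrp{\delta_p(s,t)}$ (done in the paper as Lemma~\ref{lem:detector}), and recognising the result as the divisor sum $D(\mathfrak{F}(X),f,\mathcal{P};B)$ to which the conjecture applies.

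There is, however, one genuine step you have not identified. The divisor sum conjecture in \cite{divsumpub} applies only to \emph{admissible} triplets $\mathcal{P}=(\mathcal{D},(\sigma,\tau),\www)$, and admissibility includes the requirement \eqref{eq:qr is one} that $\qr{\delta_p(s,t)}{\Delta_p(s,t)^\flat}=1$ for every $(s,t)$ in the summation range and every $p\in M$. This is not automatic and is not just finite-place bookkeeping: the paper proves it in Lemma~\ref{lem:quad rep} via quadratic reciprocity over $K(p)$. One restricts $(s,t)$ to a sufficiently deep congruence class $(s,t)\equiv(s_0,t_0)\bmod\www^l$, writes $b_ps-\widetilde\theta_pt=(b_ps_0-\widetilde\theta_pt_0)(1+\xi)$ with $\xi$ divisible by $\delta_p(\theta_p,1)$ and a large power of $2$, and then reciprocity together with the sign control on the archimedean ball $\mathcal{D}$ (Proposition~\ref{prop:local density arch bound}(4)) forces the symbol to be $1$. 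The same congruence restriction is what makes the local densities at primes dividing $\www$ uniformly bounded below (Corollary~\ref{cor:local density nonarch bound}), so it is doing double duty; without it the hypotheses of the conjecture are not met.

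Two smaller corrections. Your $O(1)$ for the complement of $U$ is too optimistic: horizontal components of $X\smallsetminus U$ meet \emph{every} fibre, so the correction is $O(B^{2\delta})$ from Schanuel's theorem, not $O(1)$. And the archimedean densities are not uniformly bounded below in $(s,t)$; they scale as $T^{-2m}$ on the dilate $T\mathcal{D}$ (Proposition~\ref{prop:local density arch bound}(6)), which is why the paper passes through a dyadic decomposition $\mathcal{D}\cup\theta\mathcal{D}\cup\cdots\cup\theta^N\mathcal{D}$ of the base range before reducing to \eqref{eq:div sum bound}.
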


Theorem \ref{thm:conic_bundle_precise} implies Theorem \ref{thm:conic bundle}, since the complexity (as defined in \cite{divsumpub}) of $\mathfrak{F}(X)$ is exactly the complexity of the conic bundle $\pi:X\to\PP^1_K$, as will follow from Lemma \ref{lem:ref_T_diagonalization}.

We start our proof of Theorem \ref{thm:conic_bundle_precise} by constructing the divisor sums to which the lower bound conjecture will be applied. Let $\DD\subset K_\infty^2$ be a set of the form 
\[\DD=\prod_{v|\infty} \DD_v,
\]
where  $\DD_v\subseteq K_v^2$ is a compact ball of positive radius. Let $\www$ be an ideal of $\OO_K$ divisible by $2\classrep$, and let $\sigma,\tau\in\OO_K$ with $\sigma\OO_K+\tau\OO_K = \classrep$. For the triplet 
\[\c{P}:=(\c{D},(\sigma,\tau),\www),\]
and each $B\geq 1$, we define the sets
\[  M^*(\c{P},B):=
\{(s,t)\in\classrep^2\cap B^{1/m}\DD \where (s,t)\equiv (\sigma,\tau)\bmod \www
\text{, }s\OO_K + t\OO_K = \classrep\}
\]
and
\[
  M^*(\c{P},\infty) := \bigcup_{B\geq 1}M^*(\c{P},B).
\]
For any $\aaa\in\ideals$, we write 
\begin{equation}\label{flats}
  \aaa^\flat := \prod_{\substack{\ppp\nmid \www}}\ppp^{v_\ppp(\aaa)},
\end{equation}
and for $a\in\OO_K\smallsetminus\{0\}$, let $a^\flat := (a\OO_K)^\flat$. Observe that this notation depends on $\www$. 

As in \cite{divsumpub}, we call the triplet $\c P$ \emph{admissible}, if 
the following conditions~\eqref{eq:f not zero}-\eqref{eq:qr is one} hold:
\begin{equation}
\label{eq:f not zero}
\Delta_p(\sigma,\tau)\neq 0 \
\text{ for all } p\in M,
\end{equation} 
and whenever $(s,t)\in M^*(\c{P},\infty)$,
we have
\begin{equation}
\label{eq:D not zero}
\Delta_p(s,t)\neq 0 \ \text{ for all } p\in M,
\end{equation}
as well as 
\begin{equation}
\label{eq:qr is one}
\qr{\delta_p(s,t)}{\Delta_p(s,t)^\flat} = 1
\
\text{ for all } p\in M
.\end{equation}
With the function $\one_f:\ideals\to (0,\infty)$ given by
\begin{equation*}
  \one_f(\aaa) = \prod_{\ppp\mid\aaa}(1+f(\ppp)) = \prod_{\substack{\ppp\mid\aaa\\\ppp\nmid 2}}\left(1-\frac{2}{\norm\ppp}\right),
\end{equation*}
we define the function $r:M^*(\c{P},\infty)\to [0,\infty)$ by
\[
r(s,t) = r(\mathfrak{F}(X),f,\c{P};s,t)
:=
\prod_{p\in M}
\one_{f}(\Delta_p(s,t)^\flat)
\l(\sum_{\ddd_i|\Delta_p(s,t)^{\flat}}\qr{\delta_p(s,t)}{\ddd_i}\r)
\]
and consider the sum
\begin{equation*}
D(\mathfrak{F}(X),f,\c{P};B)
:=
\sum_{\substack{(s,t) \in M^*(\c{P},B)}}
r(\mathfrak{F}(X),f,\c P;s,t).
\end{equation*}

This is a generalized divisor sum, as in \cite{divsumpub}. Assuming the validity of \cite[Conjecture 1]{divsumpub} for $K$, $\classrep$, $f$ and $\mathfrak{F}(X)$, we obtain a finite set $S_{\text{bad}}$ of prime ideals of $\OO_K$, such that
\begin{equation}\label{eq:div sum bound}
  D(\mathfrak{F}(X),f,\c{P};B) \gg B^2(\log B)^{\card\{p\in M\where X_p \text{ split}\}},
\end{equation}
whenever $\www$ is divisible by each prime in $S_{\text{bad}}$ and the triplet $\c P$ is admissible. 

In the rest of this section, we reduce Theorem \ref{thm:conic_bundle_precise} to the statement \eqref{eq:div sum bound}, for a suitable
choice of admissible triplet $\c P$.

\subsection{Partitioning the rational points into fibres}\label{sec:partitioning}
We note that the fibres of the conic bundle morphism $\pi:X\to\P^1_K$ are isomorphic to plane conics as follows: let $L\supset K$ be an extension field. For $(s,t)\in L^2\smallsetminus\{0\}$, define the plane conic
\begin{equation*}
C_{(s,t)}\subset \P^2_L \quad\text{ by }\quad Q_{(s,t)}(\vx) = 0.  
\end{equation*}
For $p=(s:t)\in \P^1_K(L)$, we have then an isomorphism $\phi_{(s,t)}:C_{(s,t)} \to X_{p}$, given by sending $\vx$ to $(s:t;x_0:x_1:x_2) \in \FF(0,a_1,a_2)$, for any representative $(x_0,x_1,x_2)\in L^3\smallsetminus\{0\}$ of $\vx$. 

Sorting $X(K)$ by fibres, we get
\begin{equation*}
  N_{U,H}(B) = \sum_{(s:t)\in \P^1(K)}\card\{\vx \in (X_{(s:t)}\cap U)(K) \where H(\vx)\leq B\}.
\end{equation*}

Let $\freeunits$ be the subgroup of $\OO_K^\times$ generated by a fixed set of fundamental units. Then $\OO_K^\times = \freeunits \oplus \mu_K$, where $\mu_K$ is the group of roots of unity in $K$. We now construct a fundamental domain for the diagonal action of $\freeunits$ on $K^2\smallsetminus\{0\}$, as in \cite{MR2247898}. We consider $K$ as a subfield of $K_\infty = K\otimes_{\Q}\R = \prod_{v\in\archplaces}K_v$ via the natural embedding sending an element to its conjugates. Let $l:\prod_{v\in\archplaces}K_v^\times \to \R^{\archplaces}$ be the logarithmic map $(x_v)_v\mapsto (\locdegv\log\absv{x_v})_v$. Then $l(\freeunits)=l(\OO_K^\times)$ is a lattice in the hyperplane $\Sigma\subset \R^\archplaces$ given by $\sum_v y_v=0$. Let $F$ be a fundamental parallelotope for the lattice $l(\OO_K)$, and $\delta := (\locdegv)_{v}\in\R^\archplaces$.
Let 
\begin{align}\label{eq:def_fund_dom}
  \mathcal{G} &:= \left\{(s_v,t_v)_v\in\prod_{v\in\archplaces}(K_v^2\smallsetminus\{0\}) \where (\locdegv\log \max\{\absv{s_v},\absv{t_v}\})_v \in F+\R\delta\right\}.
\end{align}
Then $\mathcal{G}\cap K^2$ is a fundamental domain for the diagonal action of $\freeunits$ on $K^2\smallsetminus\{0\}$. Moreover, $\mathcal{G}$ is a cone, that is, $t\mathcal{G} = \mathcal{G}$ for all $t>0$. Multiplying $(s_0,t_0)$ by a unit and slightly shifting the fundamental parallelotope $F$, if necessary, we can ensure that $(s_0,t_0)$ lies in the interior of $\mathcal{G}$. Define the set
\begin{equation} \label{def:B(B)}
  \mathcal{B}(B) := \left\{(s,t)\in \classrep^2 \cap \mathcal{G}\WHERE
    \begin{aligned}
      &H((s:t))\leq B,\\
      &s\OO_K + t\OO_K = \classrep,\\
      &\Delta(s,t)\neq 0
    \end{aligned}
\right\}.
\end{equation}

\subsection{Lower bound for the counting problem}
Since every rational point in $\P^1(K)$ has at most $|\mu_K|$ representatives $(s,t) \in \mathcal{B}(B^\delta)$, for every $\delta>0$, we obtain the lower bound
\begin{align*}
  N_{U,H}(B) &\gg \sum_{(s,t)\in\mathcal{B}(B^\delta)}\card\{\vx\in(X_{(s:t)}\cap U)(K)\where H(\vx)\leq B\}.
\end{align*}

Let $V\subset X$ be the closed subvariety complement to $U$. It has finitely many irreducible components, with each one being either a point or a divisor of vertical or horizontal type. 

Vertical components are irreducible components of the fibres of $\pi$, and the height $H$ induces an anticanonial height on each smooth fibre
(recall from \eqref{def:B(B)} that we are not counting
rational points on singular fibres). Since Manin's conjecture is known to hold for $\PP^1$, there are $O(B)$ points of height at most $B$ in each smooth fibre.

Horizontal components meet each fibre in the same number of points, counted with intersection multiplicities, and, by Schanuel's theorem, there are $O(B^{2\delta})$ fibres $X_{(s:t)}$ with $(s,t)\in\mathcal{B}(B^{\delta})$. Thus, 
\begin{align*}
N_{U,H}(B)&\gg \sum_{(s,t)\in\mathcal{B}(B^\delta)}\card\{\vx\in X_{(s:t)}(K)\where H(\vx)\leq B\} + O(B+B^{2\delta}+1).
\end{align*}

Recall the definition of our height $H$ on $X(K)$ in \eqref{eq:local height shape}. Define on $C_{(s,t)}(K)$ the height $H_{(s,t)}(\vx)=\prod_{v\in\places}H_{(s,t),v}(\vx)^{\locdegv}$, where 
\begin{equation*}
  H_{(s,t),v}(\vx) := (H_v\circ\phi_{(s,t)})(\vx),\quad\text{ for }v\in\archplaces,
\end{equation*}
and $H_{(s,t),v}(\vx) := \max\{\absv{x_0},\absv{x_1},\absv{x_2}\}$ for $v\in\nonarchplaces$. Let $b:=2-a_1-a_2-e$. Since $a_1,a_2\geq 0$, we have, for $s,t\in \classrep$ with $s\OO_K+t\OO_K=\classrep$, the inequality
\begin{equation*}
   (H_v\circ\phi_{(s,t)})(\vx) \leq \absv{\classrep}^{b} H_{(s,t),v}(\vx)
\end{equation*}
for all $v\in\nonarchplaces$, whence $H\circ\phi_{(s,t)}\leq \norm\classrep^{-b} H_{(s,t)}$ as well.
Let
\begin{equation*}
  N_{C_{(s,t)}, H_{(s,t)}}(B) := \card\{\vx \in C_{(s,t)}(K)\where H_{(s,t)}(\vx)\leq B\}.
\end{equation*}
Then
\begin{equation}\label{eq:estimated by conics}
  N_{U,H}(B) \gg \sum_{(s,t)\in\mathcal{B}(B^\delta)}N_{C_{(s,t)},H_{(s,t)}}(\norm\classrep^{b}B) + O(B+B^{2\delta}).
\end{equation}

We introduce the short notation $\bomega_v(s,t):=\bomega_{H_{(s,t),v}}(C_{(s,t)}(K_v))$ and
\begin{equation*}
c(s,t):= \prod_{v\in\archplaces}\bomega_v(s,t)\prod_{v\in\nonarchplaces}\left(1-\frac{1}{q_v}\right)\bomega_v(s,t).
\end{equation*}
Let
\begin{equation*}
  \mathfrak{S}(B):=\sum_{(s,t)\in\mathcal{B}(B)}c(s,t).
\end{equation*}
Using Proposition \ref{prop:conic_counting} to estimate $N_{C_{(s,t)},H_{(s,t)}}(B)$ in \eqref{eq:estimated by conics} from below, we obtain, for small enough $\delta>0$,
\begin{equation*}
  N_{U,H}(B)\gg \mathfrak{S}(B^\delta) B + O(B).
\end{equation*}
Using Lemma \ref{lem:Pic}, we see that to prove Theorem \ref{thm:conic_bundle_precise} it suffices to show the bound
\begin{equation}\label{eq:desired bound 1}
  \mathfrak{S}(B)\gg (\log B)^{\card\{p \in M\where X_p \text{ split}\}+1}.
\end{equation}

\subsection{Local density in families}
Let $F=K_v$ be the completion of $K$ at any place $v$, with corresponding absolute value $\abs{\cdot}=\absv{\cdot}$. Let $\mathcal{U}\subset F^2$ be any open subset satisfying $\Delta(s,t)\neq 0$ for all $(s,t)\in \mathcal{U}$. Then
\begin{equation*}
  \mathcal{M}_{\mathcal{U}} := \{(s,t,\vx)\in \mathcal{U} \times \P^2(F)\where Q(s,t,\vx)=0\}\subseteq \mathcal{U} \times \P^2(F)
\end{equation*}
is an analytic manifold, as follows from the analytic version of the implicit function theorem (we follow the conventions of \cite[\S II.III]{Ser06} concerning analytic manifolds over local fields). Let $(\sigma,\tau)\in F^2\smallsetminus\{0\}$ and $\vxi\in \P^2(F)$ with $Q(\sigma,\tau,\vxi)=0$ and $\Delta(\sigma,\tau)\neq 0$. 

\begin{lemma}\label{lem:trivial_fibre_bundle}
  There is an open neighbourhood $\mathcal{U}_0$ of $(\sigma,\tau)$ such that the
projection
   \begin{equation*}
     \alpha:\mathcal{M}_{\mathcal{U}_0} \to \mathcal{U}_0
  \end{equation*}
  is a proper surjective analytic map.
\end{lemma}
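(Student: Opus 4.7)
The plan is to split the statement into two largely independent assertions: surjectivity of $\alpha$ on a suitable neighborhood, proved by constructing a local analytic section via the implicit function theorem; and properness of $\alpha$, proved by a standard compactness argument once surjectivity has pinned down the correct neighborhood. Throughout, I would work in the $v$-adic analytic category following the conventions of \cite[\S II.III]{Ser06}, noting that the archimedean ($K_v \in \{\RR,\CC\}$) and non-archimedean cases are handled uniformly.

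For surjectivity, the idea is to lift $\vxi$ analytically. Since $\Delta(\sigma,\tau) \neq 0$, the fibre $C_{(\sigma,\tau)}$ is a smooth plane conic, so in particular smooth at $\vxi$. I would pass to an affine chart of $\P^2_F$ in which $\vxi$ is a finite point, dehomogenising by $x_j = 1$ for some index $j$ with $\xi_j \neq 0$. Smoothness of $C_{(\sigma,\tau)}$ at $\vxi$ then forces some partial derivative $\partial Q/\partial x_i$, with $i \neq j$, to be non-zero at $(\sigma,\tau,\vxi)$. Applying the analytic implicit function theorem, I would solve $Q = 0$ for $x_i$ as an analytic function of $(s,t)$ and the remaining affine coordinate on an open neighborhood of $(\sigma,\tau,\vxi)$; fixing that remaining coordinate at its value at $\vxi$ produces an analytic section $\mathcal{U}_0 \to \mathcal{M}_{\mathcal{U}_0}$ of $\alpha$ over an open neighborhood $\mathcal{U}_0$ of $(\sigma,\tau)$, which immediately implies surjectivity on $\mathcal{U}_0$.

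For properness, the key input is that $K_v$ is locally compact, so $\P^2(K_v)$ is compact in its natural topology. After shrinking $\mathcal{U}_0$ if necessary to a relatively compact open neighborhood of $(\sigma,\tau)$, the first projection $\mathcal{U}_0 \times \P^2(K_v) \to \mathcal{U}_0$ has compact fibres and is therefore proper. The subset $\mathcal{M}_{\mathcal{U}_0}$ is closed in $\mathcal{U}_0 \times \P^2(K_v)$, since it is cut out by the vanishing of $Q(s,t,\vx)$, which has a well-defined and continuous zero locus on this product thanks to the homogeneity of $Q$ in $\vx$. The restriction of a proper map to a closed subset is proper, so $\alpha$ is proper on $\mathcal{M}_{\mathcal{U}_0}$.

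I do not anticipate any substantive obstacle. The smoothness hypothesis $\Delta(\sigma,\tau) \neq 0$ directly supplies the non-vanishing Jacobian required for the implicit function theorem, and compactness of $\P^2(K_v)$ is classical. The only point needing care is that the neighborhood produced by the implicit function theorem and the one chosen to be relatively compact must agree; this is resolved simply by replacing $\mathcal{U}_0$ with the intersection of the two, which remains an open neighborhood of $(\sigma,\tau)$.
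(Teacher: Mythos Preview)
Your argument is correct and reaches the same conclusion, but via a more elementary route than the paper. The paper argues algebro-geometrically: over the Zariski open $V=\{\Delta\neq 0\}\subset\mathbb{A}^2$, the incidence variety inside $V\times\PP^2_F$ maps projectively to $V$, hence its analytification is proper (citing \cite{Sal96}); the morphism is moreover smooth, so the analytic map is a submersion, which by definition admits local sections, giving surjectivity near $(\sigma,\tau)$. In other words, the paper invokes two general principles (projective $\Rightarrow$ proper, smooth $\Rightarrow$ submersion) rather than unpacking them.

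Your approach instead carries out those principles by hand: the implicit function theorem produces the local section (this is exactly what ``submersion'' means in \cite[II.III.10]{Ser06}), and compactness of $\PP^2(K_v)$ over a local field yields properness directly. This is perfectly valid and arguably more transparent for the reader who does not want to chase the references. Two minor remarks: first, the shrinking to a relatively compact $\mathcal{U}_0$ is unnecessary, since for any Hausdorff $\mathcal{U}_0$ the projection $\mathcal{U}_0\times\PP^2(K_v)\to\mathcal{U}_0$ is already proper (the tube lemma gives closedness, and together with compact fibres this yields properness); second, your phrase ``has compact fibres and is therefore proper'' elides this closedness step, so you should either say ``closed with compact fibres'' or simply observe that a product projection with compact second factor is proper.
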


\begin{proof}
	Let $V \subset \mathbb{A}^2$ be the complement of $\Delta(s,t) = 0$.
	Consider the morphism of varieties 
	\begin{equation} \label{eqn:morphism}
		\{(s,t,\vx)\in V \times \P_F^2 \where Q(s,t,\vx)=0\} \to  V
	\end{equation}
	over $F$. This morphism is projective, hence the induced map
	on analytic manifolds is a proper morphism (see \cite[p.~110]{Sal96} for this claim). Moreover the morphism \eqref{eqn:morphism} is smooth,
	hence the induced map of analytic manifolds is a submersion \cite[Prop.~2.7]{Sal96}, 
	in particular, it admits a section in some open neighbourhood $\mathcal{U}_0$ of $(\sigma,\tau)$
	(this is one of the equivalent definitions for a map to be a submersion; see \cite[II.III.10]{Ser06}).
	This completes the proof.
\end{proof}

We now prove the following fact about Peyre's local density $\bomega_v(s,t)=\bomega_{H_{(s,t),v}}(C_{(s,t)}(F))$.

\begin{lemma}\label{lem:local density continuous}
  The map $(s,t)\mapsto \bomega_v(s,t)$ is continuous on $\mathcal{U}_0$.
\end{lemma}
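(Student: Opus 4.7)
Fix $(\sigma,\tau) \in \mathcal{U}_0$; I will establish continuity of $\bomega_v$ at this point. The strategy is a Gelfand--Leray style fiber-integration argument: first, trivialize the proper analytic map $\alpha: \mathcal{M}_{\mathcal{U}_0} \to \mathcal{U}_0$ near $\alpha^{-1}(\sigma,\tau)$ by finitely many analytic charts obtained from the implicit function theorem; second, express $\bomega_v(s,t)$ via \eqref{eq:density local coord} on these charts as integrals over domains independent of $(s,t)$; finally, conclude by dominated convergence.

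For the trivialization, I would observe that for each $\vxi \in C_{(\sigma,\tau)}(F)$, the smoothness of $C_{(\sigma,\tau)}$ (guaranteed by $\Delta(\sigma,\tau) \neq 0$) forces some partial derivative $Q_{x_j}(\sigma,\tau,\vxi)$ to be nonzero. The implicit function theorem then produces an analytic neighborhood of $(\sigma,\tau,\vxi)$ in $\mathcal{M}_{\mathcal{U}_0}$ of product form $\mathcal{V} \times V'$, where $\mathcal{V} \subset \mathcal{U}_0$ is a neighborhood of $(\sigma,\tau)$ and $V' \subset F$ is open; on this chart $Q_{x_j}$ is nowhere vanishing and one of the affine coordinates $x_l$ ($l \neq j$) serves as the fiber parameter. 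Since $C_{(\sigma,\tau)}(F)$ is compact (being the $F$-points of a projective $F$-variety) and $\alpha$ is proper by Lemma \ref{lem:trivial_fibre_bundle}, finitely many such product charts, together with a shrunken neighborhood $\mathcal{U}^* \subset \mathcal{U}_0$ of $(\sigma,\tau)$, cover all of $\alpha^{-1}(\mathcal{U}^*)$.

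Choosing a continuous partition of unity $\{\psi_i\}$ on $\alpha^{-1}(\mathcal{U}^*)$ subordinate to these charts, I would write
\[
\bomega_v(s,t) \;=\; \sum_i \int_{V'_i} \psi_i(s,t,x_l)\, \frac{\mathrm{d}x_l}{H_{(s,t),v}(\cdots)^{\locdegv}\,|Q_{x_j}(\cdots)|_v^{\locdegv}},
\]
where the omitted arguments are the analytic functions expressing the other fiber coordinate in terms of $(s,t,x_l)$ via the implicit function theorem. Since $Q_{x_j}$ is continuous and nowhere vanishing on each chart, and $H_{(s,t),v}$ depends continuously on $(s,t)$ and the fiber variable directly from its defining formula, each integrand is jointly continuous in $(s,t,x_l)$ and uniformly bounded on $\overline{\mathcal{U}^*} \times V'_i$ after shrinking $\mathcal{U}^*$ to a precompact neighborhood. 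Dominated convergence then delivers continuity of each summand, and hence of $\bomega_v$, at $(\sigma,\tau)$. The principal technical point is producing trivializing charts covering the whole fiber uniformly in the parameter---precisely the content of properness of $\alpha$ in Lemma \ref{lem:trivial_fibre_bundle}---after which the statement reduces to a standard continuity result for parameter-dependent integrals on fixed domains.
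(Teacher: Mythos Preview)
Your proposal is correct and follows essentially the same route as the paper: cover a neighborhood of the fibre by finitely many implicit-function charts using properness of $\alpha$, pass to a subordinate partition of unity, and write $\bomega_v(s,t)$ as a finite sum of integrals over fixed domains with jointly continuous, bounded integrands, concluding by dominated convergence. The only cosmetic difference is that the paper proves continuity on a precompact open subset $\mathcal{U}_1 \subset \mathcal{U}_0$ rather than pointwise at $(\sigma,\tau)$.
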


\begin{proof}
  It is enough to prove continuity on any open subset $\mathcal{U}_1$ whose closure $\overline{\mathcal{U}}_1$ is contained in $\mathcal{U}_0$. Since $\Delta(s,t)\neq 0$ on $\mathcal{U}_0$, we may cover $\mathcal{M}_{\mathcal{U}_0}$ by charts $\varphi_i : V_i \to W_i\subset F^{3}$ of the form, say, $(s,t,(1:x_1:x_2)) \mapsto (s,t,x_1)$, with $W_i$ a small ball with respect to the max-norm on $F^3$.

Since $\alpha^{-1}(\overline{\mathcal{U}}_1)$ is compact, finitely many of these charts are enough to cover $\alpha^{-1}(\mathcal{U}_1)$, say $\varphi_1, \ldots, \varphi_k$, and moreover we may find a corresponding partition of unity $f_1, \ldots, f_k$ of $\alpha^{-1}(\mathcal{U}_1)$ with $\overline{\supp(f_i)}\subseteq V_i$. Then
\begin{equation*}
  \bomega_v(s,t) = \sum_{i=1}^k\int_{(s,t,x_1)\in W_i}\frac{f_i(\varphi_i^{-1}(s,t,x_1))\mathrm{d}x_1}{H_v(\varphi_i^{-1}(s,t,x_1))^\locdegv\absv{Q_{x_2}(\varphi_i^{-1}(s,t,x_1))}^\locdegv},
\end{equation*}
with $H_v(s,t,\vx) := H_{(s,t),v}(\vx)$. However, each integrand is continuous in $s,t,x_1$ and compactly supported, so it can be bounded from above by a constant.
Hence $\bomega_v(s,t)$ is continuous on $\mathcal{U}_1$ by Lebesgue's dominated convergence theorem.
\end{proof}

Recall that we are given $(s_0,t_0)\in \OO_K^2$ with $s_0\OO_K + t_0\OO_K = \classrep$, such that $C_{(s_0,t_0)}(K)\neq \emptyset$ and $\Delta(s_0,t_0)\neq 0$. We note some consequences of the previous lemmata.

\begin{corollary}\label{cor:local density nonarch bound}
  Let $\www\in\ideals$ with $\classrep\mid\www$. Then there is $l\in\N$ such that
  \begin{equation*}
    \prod_{\ppp\mid\www}\left(1-\frac{1}{\norm\ppp}\right)\bomega_{\ppp}(s,t) \gg_\www 1,
  \end{equation*}
  for all $s,t\in\classrep$ with $(s,t)\equiv (s_0,t_0)\bmod \www^l$. The implicit constant is independent of $s,t$.
\end{corollary}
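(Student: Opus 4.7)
The plan is to exploit the continuity of $\bomega_v$ established in Lemma~\ref{lem:local density continuous} at each of the finitely many primes $\ppp \mid \www$, together with the positivity of $\bomega_\ppp(s_0,t_0)$.

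First, I would establish that $\bomega_\ppp(s_0,t_0) > 0$ for every $\ppp \mid \www$. Since $\Delta(s_0,t_0) \neq 0$, the conic $C_{(s_0,t_0)}$ is smooth over $K_\ppp$, and the rational point in $C_{(s_0,t_0)}(K) \neq \emptyset$ provides a smooth $K_\ppp$-point $\vxi$. Around such a point the measure described by \eqref{eq:density local coord} is given by a non-vanishing smooth $\ppp$-adic form in local analytic coordinates, so it has positive mass on any sufficiently small open neighbourhood of $\vxi$; in particular $\bomega_\ppp(s_0,t_0) > 0$.

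Next, I would apply Lemma~\ref{lem:trivial_fibre_bundle} with $F = K_\ppp$ at $(\sigma,\tau) = (s_0,t_0)$ to produce an open neighbourhood $\mathcal{U}_{0,\ppp} \subseteq K_\ppp^2$ of $(s_0,t_0)$, then invoke Lemma~\ref{lem:local density continuous} to obtain continuity of $(s,t) \mapsto \bomega_\ppp(s,t)$ on $\mathcal{U}_{0,\ppp}$. Since the cosets $\{(s,t) : (s,t) \equiv (s_0,t_0) \bmod \ppp^n\}$ form a basis of neighbourhoods of $(s_0,t_0)$ in $K_\ppp^2$, continuity yields an integer $l_\ppp \in \NN$ such that
\[
\bomega_\ppp(s,t) \geq \tfrac{1}{2}\bomega_\ppp(s_0,t_0) > 0
\quad \text{whenever }(s,t) \equiv (s_0,t_0) \bmod \ppp^{l_\ppp}.
\]

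Finally, I would take $l := \max_{\ppp \mid \www} \lceil l_\ppp / v_\ppp(\www) \rceil$, so that any $(s,t) \in \classrep^2$ with $(s,t) \equiv (s_0,t_0) \bmod \www^l$ satisfies $(s,t) \equiv (s_0,t_0) \bmod \ppp^{l_\ppp}$ simultaneously for every $\ppp \mid \www$. Since $\www$ has only finitely many prime divisors, multiplying the bounds yields
\[
\prod_{\ppp \mid \www}\left(1 - \frac{1}{\norm\ppp}\right)\bomega_\ppp(s,t) \;\geq\; \prod_{\ppp \mid \www}\left(1 - \frac{1}{\norm\ppp}\right)\cdot \tfrac{1}{2}\bomega_\ppp(s_0,t_0) \;\gg_\www\; 1,
\]
as required. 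The only real obstacle is confirming the strict positivity of each $\bomega_\ppp(s_0,t_0)$; once the smoothness plus existence of a local point settles this, the remainder is a routine continuity-and-finite-product argument.
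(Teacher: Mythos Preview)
Your proposal is correct and follows essentially the same approach as the paper: for each $\ppp\mid\www$ you invoke Lemma~\ref{lem:trivial_fibre_bundle} and Lemma~\ref{lem:local density continuous} to obtain a small $\ppp$-adic ball on which $\bomega_\ppp$ is bounded below by a positive constant, then take $l$ large enough to handle all finitely many primes simultaneously. The only cosmetic difference is that the paper argues via compactness of the ball (a continuous function on a compact set attains a positive minimum, since $C_{(s,t)}(K_\ppp)\neq\emptyset$ throughout by Lemma~\ref{lem:trivial_fibre_bundle}), whereas you use continuity at the single point $(s_0,t_0)$ to secure the bound $\tfrac{1}{2}\bomega_\ppp(s_0,t_0)$; both are equally valid.
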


\begin{proof}
  Let $\ppp\mid\www$. Lemma \ref{lem:trivial_fibre_bundle}, when combined with Lemma \ref{lem:local density continuous}, yields an open ball $\mathcal{U}_0=\{(s,t)\in K_\ppp^2\ :\  (s,t)\equiv (s_0,t_0)\bmod \ppp^l\}$, on which $\bomega_v$ is continuous and such that
  \begin{equation}\label{eq:Kp points exist}
C_{(s,t)}(K_\ppp)\neq\emptyset,
\end{equation}
and $\Delta(s,t)\neq 0$ for all $(s,t)\in \mathcal{U}_0$.  Since $\mathcal{U}_0$ is also compact, the density $\bomega_\ppp(s,t)$ has a minimal value on $\mathcal{U}_0$, which is non-zero due to \eqref{eq:Kp points exist}. To prove the corollary, we choose $l$ maximal for all $\ppp\mid\www$.
\end{proof}

Our next application is to the archimedean places. Recall the definition of $\c G$ in \eqref{eq:def_fund_dom}.

\begin{proposition}\label{prop:local density arch bound}
  There are compact balls $\mathcal{D}_v \subset K_v^2\smallsetminus\{0\}$, for $v\in\archplaces$, such that their product $\mathcal{D} := \prod_{v\in\archplaces}\mathcal{D}_v\subset K_\infty^2$ has the following properties:
  \begin{enumerate}
  \item $(s_0,t_0)$ is in the interior of $\mathcal{D}$. \label{item:1}
  \item $\mathcal{D} \subset \mathcal{G}$.
  \item For all $v\in \archplaces$ and all $(s_v,t_v)\in T\mathcal{D}_v$, $T>0$, we have $\Delta(s_v,t_v)\neq 0$.
  \item For every $p\in M$, every real place $v$, every linear factor $L(s,t)$ of $\Delta_p(s,t)\in K_v[s,t]$, every $T>0$, and every $(s_v,t_v)\in T\mathcal{D}_v$, the sign of $L(s_v,t_v)$ equals the sign of $L(s_0,t_0)$ in $K_v=\RR$.  
  \item For all $(s,t)\in K^2\cap T\mathcal{D}$, $T>0$, the conic $C_{(s,t)}$ is smooth and has $K_v$-points for all $v\in\archplaces$.
  \item For $(s,t)\in K^2 \cap T\mathcal{D}$, $T>0$, we have
    \begin{equation*}
      \prod_{v\in\archplaces}\bomega_v(s,t) \gg_{\mathcal{D}} T^{-2\dg},
    \end{equation*}
        where the implicit constant is independent of $s,t$.
  \end{enumerate}
\end{proposition}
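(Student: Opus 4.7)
The plan is to construct the balls $\mathcal{D}_v$ as small compact neighbourhoods of $(s_0,t_0)$ at each archimedean place, shrunken enough for properties (1)--(5) to follow by continuity and the cone structure, and then to derive (6) from an explicit scaling identity for Peyre's local density. Properties (1) and (2) are immediate from $(s_0,t_0)$ lying in the interior of the cone $\mathcal{G}$ (combined with the cone property $T\mathcal{G}=\mathcal{G}$). For (3) and (4), I would use continuity of $\Delta$ and of the linear factors of the $\Delta_p$, together with their non-vanishing (resp.\ fixed sign) at $(s_0,t_0)$; their homogeneity then propagates the conditions from $\mathcal{D}_v$ to every dilate $T\mathcal{D}_v$ with $T>0$. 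For (5), I would apply Lemma \ref{lem:trivial_fibre_bundle} separately at each archimedean place $v$ (using that $C_{(s_0,t_0)}(K)\neq\emptyset$ to supply the point $\vxi$) to find a neighbourhood on which every fibre is smooth and has a $K_v$-point; smoothness and $K_v$-rational points are then transported from $\mathcal{D}_v$ to $T\mathcal{D}_v$ by the $K_v$-isomorphism $\phi_T\colon(x_0,x_1,x_2)\mapsto(x_0,T^{-a_1}x_1,T^{-a_2}x_2)$ supplied by \eqref{eq:homogeneity condition}.

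The heart of the proof, and the main obstacle, is the lower bound (6). The key step I would prove separately is the scaling identity
\[
\bomega_v(Ts',Tt')=T^{-2\locdegv}\,\bomega_v(s',t')\qquad (T>0),
\]
obtained by pushing the Peyre measure \eqref{eq:density local coord} forward through $\phi_T$ in a local chart $\varphi\colon(1:x_1:x_2)\mapsto x_1$. Each of the three factors in \eqref{eq:density local coord} scales non-trivially: the Haar measure $dx_1$ contributes $T^{-a_1\locdegv}$; differentiating the homogeneity relation $Q_{(Ts,Tt)}(\vy)=T^eQ_{(s,t)}(\phi_T^{-1}(\vy))$ via the chain rule gives $|Q_{(Ts',Tt'),y_2}|_v=T^{e+a_2}|Q_{(s',t'),x_2}|_v$ and hence a factor $T^{-(e+a_2)\locdegv}$; and the explicit formula \eqref{eq:local height shape} yields $H_{(Ts',Tt'),v}(\vy)=T^{2-a_1-a_2-e}H_{(s',t'),v}(\vx)$, contributing $T^{-(2-a_1-a_2-e)\locdegv}$. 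The delicate point is that these three exponents conspire to sum to the clean value $-2\locdegv$, independent of $e,a_1,a_2$.

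Once the scaling identity is established, Lemma \ref{lem:local density continuous} and the compactness of $\mathcal{D}_v$ furnish a uniform lower bound $\bomega_v(s',t')\geq c_v>0$ on $\mathcal{D}_v$, the positivity being guaranteed by (5). Multiplying the scaling identity across all $v\in\archplaces$ and using $\sum_{v\mid\infty}\locdegv=\dg$ then yields
\[
\prod_{v\in\archplaces}\bomega_v(s,t)\;\geq\;\Bigl(\prod_{v\in\archplaces}c_v\Bigr)T^{-2\dg}\;\gg_{\mathcal{D}}\;T^{-2\dg}
\]
for every $(s,t)\in K^2\cap T\mathcal{D}$, which is precisely (6).
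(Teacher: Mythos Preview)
Your proposal is correct and follows essentially the same approach as the paper: shrink balls around $(s_0,t_0)$ inside the $\mathcal{U}_0$ provided by Lemma~\ref{lem:trivial_fibre_bundle} to obtain (1)--(5), then establish the scaling identity $\bomega_v(Ts,Tt)=T^{-2\locdegv}\bomega_v(s,t)$ via the map $\phi_T$ and combine it with Lemma~\ref{lem:local density continuous} plus compactness for (6). The paper's proof differs only in cosmetic details---it takes $x_2$ rather than $x_1$ as the local coordinate (so $Q_{x_1}$ appears in the denominator instead of $Q_{x_2}$) and makes the partition-of-unity argument explicit to pass from a single chart to the full conic; you should be prepared to do the latter as well, but the scaling factor is the same in every such chart so this is routine.
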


\begin{proof}
  For $v\in\archplaces$, let $\mathcal{D}_v$ be a compact ball contained in the set $\mathcal{U}_0$ from Lemma \ref{lem:trivial_fibre_bundle}, such that $(s_0,t_0)$ is in the interior of $\mathcal{D}_v$. Then  $(1)$ is clear, and since $(s_0,t_0)$ is in the interior of $\mathcal{G}$, we can achieve $(2)$ by shrinking the $\mathcal{D}_v$. Moreover, since $\Delta(s_0,t_0)\neq 0$, we may achieve $(3)$ and $(4)$ by further shrinking the balls $\mathcal{D}_v$. Property $(5)$ is clear from Lemma \ref{lem:trivial_fibre_bundle}, so it remains to prove $(6)$. To this end, we use the homogeneity condition \eqref{eq:homogeneity condition} on $Q(s,t,x_0,x_1,x_2)$ and the shape \eqref{eq:local height shape} of the local height factors $H_{(s,t),v}=H_{v}\circ\phi_{(s,t)}$ at archimedean places.

Fix an archimedean place $v$, and let $(s,t)\in\mathcal{D}_v$. Since $C_{(s,t)}$ is a smooth conic, the subset of $(x_0:x_1:x_2)\in C_{(s,t)}(K_v)$ with $x_0=0$ or $Q_{(s,t),x_1}(x_0,x_1,x_2)=0$ has dimension $0$ and thus measure $0$. Denote its complement by $C_{(s,t)}'$. 
Analogously, the subset of $C_{T(s,t)}(K_v)$ described by the analogous equations $x_0=0$ or $Q_{T(s,t),x_1}(x_0,x_1,x_2)=0$ has measure zero, and we denote its complement by $C_{T(s,t)}'$.

Let $\theta : C_{T(s,t)}' \to C_{(s,t)}'$ be the diffeomorphism that maps $(1:x_1:x_2) \mapsto (1:T^{a_1}x_1:T^{a_2}x_2)$. 
The implicit function theorem provides an atlas $(\varphi_i:V_i\to W_i)_{i\in I}$ for $C_{(s,t)}'$ consisting of charts of the form $\varphi_i(1:x_1:x_2) = x_2$, mapping an open subset $V_i \subseteq C_{(s,t)}'$ diffeomorphically to an open subset of $W_i\subseteq K_v$. Let $(f_i)_{i\in I}$ be a partition of unity for $C_{(s,t)}'$ that satisfies $\supp f_i \subseteq V_i$.

We construct now an atlas for $C_{T(s,t)}'$ and a corresponding partition of unity as follows. The open subsets $V_{T,i} := \theta^{-1}(V_i)\subseteq C_{T(s,t)}'$, for $i\in I$ cover $C_{T(s,t)'}$. Let $W_{T,i} := T^{-a_2}W_i\subseteq K_v^2$, then 
\begin{equation*}
  \varphi_{T,i} := T^{-a_2}\theta^*\varphi_i : V_{T,i}\to W_{T,i},\quad (1:x_1:x_2) \mapsto T^{-a_2}\varphi_i(\theta(1:x_1:x_2)) = x_2,
\end{equation*}
is a chart for $C_{T(s,t)}'$. The family $(\varphi_{T,i})_{i\in I}$ is an atlas for $C_{T(s,t)}'$, and we have a corresponding partition of unity $(\theta^*f_i)_{i\in I}$, where $\theta^*f_i := f_i\circ\theta$. Hence, we obtain the following formulas for the $v$-adic densities:
\begin{align*}
  \bomega_v(s,t) &= \int_{C_{(s,t)}'}\bomega_{H_{(s,t),v}} = \sum_{i\in I}\int_{W_i}\frac{f_i(\varphi_i^{-1}(x_2))\mathrm{d}x_2}{H_{(s,t),v}(\varphi_i^{-1}(x_2))^\locdegv\absv{Q_{(s,t),x_1}(\varphi_i^{-1}(x_2))}^\locdegv},\\
  \bomega_v(T(s,t)) &= \int_{C_{T(s,t)}'}\bomega_{H_{T(s,t),v}} = \sum_{i\in I}\int_{W_{T,i}}\frac{\theta^*f_i(\varphi_{T,i}^{-1}(x_2))\mathrm{d}x_2}{H_{T(s,t),v}(\varphi_{T,i}^{-1}(x_2))^\locdegv\absv{Q_{T(s,t),x_1}(\varphi_{T,i}^{-1}(x_2))}^\locdegv}.
\end{align*}
Omitting the index $i$ for clarity, let $\varphi:V\to W$ be one of the charts for $C_{(s,t)}'$, $f$ the corresponding summand in the partition of unity, and $\varphi_T:\theta^{-1}(V)\to W_T$ the corresponding chart for $C_{T(s,t)}'$. Replacing the variable $x_2\in W_{T}$ by $T^{a_2}x_2\in W$, we see that
\begin{align*}
  &\int_{W_{T}}\frac{\theta^*f(\varphi_{T}^{-1}(x_2))\mathrm{d}x_2}{H_{T(s,t),v}(\varphi_{T}^{-1}(x_2))^\locdegv\absv{Q_{T(s,t),x_1}(\varphi_{T}^{-1}(x_2))}^\locdegv}\\ = &\int_{W}\frac{\theta^*f((\theta^*\varphi)^{-1}(x_2))T^{-a_2\locdegv}\mathrm{d}x_2}{H_{T(s,t),v}((\theta^*\varphi)^{-1}(x_2))^\locdegv\absv{Q_{T(s,t),x_1}((\theta^*\varphi)^{-1}(x_2))}^\locdegv}. 
\end{align*}
With the observation that
\begin{align*}
  H_{T(s,t),v}(1,x_1,x_2) &= T^{2-a_1-a_2-e}(H_{(s,t),v}\circ\theta)(1,x_1,x_2),\\
  Q_{T(s,t),x_1}(1,x_1,x_2) &= T^{e+a_1}(Q_{(s,t),x_1}\circ\theta)(1,x_1,x_2),
\end{align*}
 by \eqref{eq:local height shape} and \eqref{eq:homogeneity condition},
 this last integral is equal to 
 \begin{equation*}
   \frac{1}{T^{2\locdegv}}\int_{W}\frac{f(\varphi^{-1}(x_2))\mathrm{d}x_2}{H_{(s,t),v}(\varphi^{-1}(x_2))^\locdegv\absv{Q_{(s,t),x_1}(\varphi^{-1}(x_2))}^\locdegv}.
 \end{equation*}
We have shown that $\bomega_v(T(s,t)) = T^{-2\locdegv}\bomega_v(s,t)$. Since $\bomega_v$ is continuous on $\mathcal{U}_0$ by Lemma \ref{lem:local density continuous}, it has a minimum in $\mathcal{D}_v$, which is non-zero thanks to $(5)$. This shows that $\bomega_v(s,t)\gg_{\mathcal{D}_v} T^{-2\locdegv}$ for $(s,t)\in T\mathcal{D}_v$, which proves $(6)$.
\end{proof}

Extending the archimedean factor of the Weil height in the obvious way from $K^2$ to $K_\infty^2$, we see that $H_\infty(Ts,Tt) = T^{\dg}H_\infty(s,t)$ holds for any $T>0$. Choose $\theta>1$ such that $\mathcal{D}\cap\theta\mathcal{D} = \emptyset$.  Moreover, let $N$ be the largest
integer such that all $(s,t)\in \theta^N\mathcal{D}$ satisfy $H(s,t)\leq \norm\classrep B^\delta$, in particular
$N \asymp_{\mathcal{D},\theta,\delta} \log B$.

Fix any ideal $\www \in \ideals$ divisible by $2\classrep$, and let $l$ be as in Corollary \ref{cor:local density nonarch bound}. Since the set
\begin{equation*}
 \{(s,t)\in\mathcal{G}\where H(s,t)_\infty\leq \norm\classrep B^\delta\}
\end{equation*}
contains the disjoint union $\mathcal{D}\cup \theta\mathcal{D}\cup\dots\cup\theta^N\mathcal{D}$, we see that
\begin{equation*}
  \mathfrak{S}(B) \gg_{\www,\mathcal{D},\theta}\sum_{k=0}^N\theta^{-2k\dg}\sum_{\substack{(s,t)\in \classrep^2\cap\theta^k\mathcal{D}\\s\OO_K + t\OO_K = \classrep\\(s,t)\equiv (s_0,t_0)\bmod \www^l}}\prod_{\ppp\nmid\www}\left(1-\frac{1}{\norm\ppp}\right)\bomega_\ppp(s,t). 
\end{equation*}
To obtain our desired bound \eqref{eq:desired bound 1}, it is thus enough to show that
\begin{equation}\label{eq:desired bound 2}
  \sum_{\substack{(s,t)\in \classrep^2\cap B\mathcal{D}\\s\OO_K + t\OO_K = \classrep\\(s,t)\equiv (s_0,t_0)\bmod \www^l}}\prod_{\ppp\nmid\www}\left(1-\frac{1}{\norm\ppp}\right)\bomega_\ppp(s,t)\gg_{\mathcal{D},\www} B^{2\dg}(\log B)^{\card\{p\in M\where X_p \text{ split}\}}, 
\end{equation}
for $B>1$. We have the freedom to multiply $\www$ by ``bad'' prime ideals, which we will do several times during the course of the proof. We start by requiring that $\www$ is divisible by all prime ideals in the set $S_{\text{bad}}$ from \S\ref{sec:conic bundles and divisors}

\subsection{Detectors}
Now we apply the results of the previous subsection and \S\ref{sec:local_densites_conics} to the analysis of the densities
\begin{equation*}
  \sigma_\ppp(s,t) := \left(1-\frac{1}{\norm\ppp}\right)\bomega_\ppp(s,t)
\end{equation*}
in \eqref{eq:desired bound 2}. Recall that $M$ is the set of all closed points $p$ in $\P^1_K$ such that the fibre $X_p$ is singular, and that we have a factorisation \eqref{eq:disc_fact}.
From now on, we assume that $\www$ is included in all prime ideals dividing $a$ or any coefficient of any of the forms $\Delta_p(s,t)$. Recall moreover the definitions of $\theta_p$ and $\delta_p$ from \S\ref{sec:conic bundles and divisors}.

\begin{lemma}\label{lem:ref_T_diagonalization}
  Let $p \in M$. Then $\delta_p(\theta_p,1) \neq 0$. Moreover, the fibre $X_p$ is split over $K(p)$ if and only if $\delta_p(\theta_p,1)$ is a square in $K(p)$.
\end{lemma}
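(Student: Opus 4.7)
My plan is to analyse the singular conic $C_{(\theta_p,1)}$ geometrically, exploiting that setting $x_{i_p}=0$ cuts out a line avoiding the unique singular point.

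First I would establish the structural facts about $X_p$. By Lemma~\ref{lem:Pic}(\ref{item:reduced}), the fibre $X_p$ is reduced, and since $p \in M$ it is singular; hence the associated ternary quadratic form $Q_{(\theta_p,1)}$ over $K(p)$ has rank exactly $2$ (rank $3$ would force smoothness and rank $1$ would force $X_p$ to be a double line, hence non-reduced). Therefore over an algebraic closure $\overline{K(p)}$, the conic $C_{(\theta_p,1)}$ decomposes as the union $\ell_1 \cup \ell_2$ of two distinct lines meeting at a unique singular point $\xi$, and $\xi$ is defined over $K(p)$ because it is the (unique) point where the $3\times 3$ symmetric matrix of $Q_{(\theta_p,1)}$ has rank drop. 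Thus the index $i_p$ of the first non-zero coordinate of $\xi$ and the binary form $\overline{Q}(x_j,x_k) := Q_{(\theta_p,1)}(x_0,x_1,x_2)|_{x_{i_p}=0}$ (where $\{j,k\}=\{0,1,2\}\setminus\{i_p\}$) are both defined over $K(p)$.

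For the first assertion, I would simply observe that the projective line $L : x_{i_p}=0$ in $\PP^2_{K(p)}$ does not pass through $\xi$ by the definition of $i_p$. Since $\ell_1 \cup \ell_2 = C_{(\theta_p,1)}$ meets $L$ in two points (counted with multiplicity) and both lines $\ell_i$ pass through $\xi \notin L$, the line $L$ intersects each $\ell_i$ transversally in a single point, and these intersection points are distinct from one another. Hence $\overline{Q}$ has two distinct roots in $\mathbb{P}^1_{\overline{K(p)}}$, so its discriminant $\delta_p(\theta_p,1)$ is non-zero.

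For the splitness criterion, I would use the rationality of $\xi$. The fibre $X_p$ is split over $K(p)$ precisely when the two components $\ell_1,\ell_2$ are each defined over $K(p)$. Given that $\xi$ is $K(p)$-rational, each $\ell_i$ is the unique line through $\xi$ and its intersection point with $L$, so $\ell_1,\ell_2$ are $K(p)$-rational if and only if the two points $L \cap C_{(\theta_p,1)}$ are $K(p)$-rational, if and only if the binary quadratic form $\overline{Q}$ splits into linear factors over $K(p)$. Since the discriminant of a non-degenerate binary quadratic form in characteristic $\neq 2$ determines its splitting behaviour via the usual quadratic formula, this in turn is equivalent to $\delta_p(\theta_p,1)$ being a square in $K(p)$. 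The only step that requires minor care is the identification of the singular point as $K(p)$-rational and the verification that $L\not\ni \xi$ exactly by the choice of $i_p$; everything else is standard linear algebra of rank-two conics.
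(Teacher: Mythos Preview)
Your proof is correct and follows essentially the same approach as the paper: both arguments exploit that the unique singular point $\xi$ of the rank-two conic $C_{(\theta_p,1)}$ is $K(p)$-rational and lies off the line $x_{i_p}=0$, so that the splitting of the conic is governed by the binary form $Q_{(\theta_p,1)}|_{x_{i_p}=0}$ and hence by its discriminant $\delta_p(\theta_p,1)$. The only difference is presentational: the paper carries this out via an explicit linear change of variables (a matrix $T$ sending $(1:0:0)$ to $\xi$, which shows $Q_{(\theta_p,1)}(T\vy)=v_{i_p}^2 Q_{(\theta_p,1)}|_{x_{i_p}=0}(y_1,y_2)$), whereas you phrase the same reduction geometrically as projection from $\xi$ onto the line $L$.
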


\begin{proof}
Recall that $X_p$ is isomorphic to the plane conic $C_{(\theta_p,1)}$. We assume without loss of generality that $i_p = 0$. Let $\vv_p=(v_0,v_1,v_2) \in \OO_{K(p)}^3$ be a representative of the unique singular point of $C_{(\theta_p,1)}$, and define the invertible (over $K(p)$) matrix
  \begin{equation*}
    T :=
    \begin{pmatrix}
      v_0 & 0 & 0\\
      v_1 & v_0 & 0\\
      v_2 & 0 & v_0
    \end{pmatrix}.
  \end{equation*}
Then, with $\vy := T^{-1}\vx$, 
  \begin{align*}
    Q_{(\theta_p,1)}(\vx) &= Q_{(\theta_p,1)}(v_0(0,y_1,y_2) + y_0\vv_p)\\
             &= v_0^2Q_{(\theta_p,1)}(0,y_1,y_2) + y_0^2Q_{(\theta_p,1)}(\vv_p) + y_0v_0\nabla Q_{(\theta_p,1)}(\vv_p)\cdot (0,y_1,y_2)\\
             &= v_0^2Q_{(\theta_p,1)}(0,y_1,y_2). 
  \end{align*}
Hence $C_{(\theta_p,1)}(\vx)$ is split over $K(p)$ if and only if the binary quadratic form $Q_{(\theta_p,1)}(0,y_1,y_2)$ is, which holds if and only if its discriminant is a square. Moreover, $\delta_p(\theta_p,1) \neq 0$ since otherwise $C_{(\theta_p,1)}$ would be a double line, which does not occur by Lemma \ref{lem:Pic}.
\end{proof}

\begin{lemma}
  The resultant $\res(\Delta_p(s,t),\delta_p(s,t))$ of the binary forms $\Delta_p(s,t)$ and $\delta_p(s,t)$ is not zero.
\end{lemma}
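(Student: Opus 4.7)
The plan is to reduce the non-vanishing of the resultant to Lemma \ref{lem:ref_T_diagonalization}, which asserts that $\delta_p(\theta_p,1) \neq 0$.

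First I would recall that for two binary forms over a field, their resultant vanishes precisely when they share a common root in $\mathbb{P}^1(\overline{K})$. Hence it suffices to show that $\Delta_p(s,t)$ and $\delta_p(s,t)$ have no common root in $\mathbb{P}^1(\overline{K})$. By construction in \S\ref{sec:conic bundles and divisors}, we have $\Delta_p(1,0) \neq 0$, so every root of $\Delta_p$ in $\mathbb{P}^1(\overline{K})$ is of the form $(\alpha : 1)$ for some root $\alpha$ of the dehomogenised polynomial $\Delta_p(s,1) \in K[s]$.

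Next I would exploit the fact that $\Delta_p(s,t)$ is irreducible over $K$, so $\Delta_p(s,1)$ is (up to a nonzero constant) the minimal polynomial of $\theta_p$ over $K$; in particular, all its roots in $\overline{K}$ are $\Gal(\overline{K}/K)$-conjugates of $\theta_p$. Therefore, if $(\alpha : 1)$ were a common root of $\Delta_p$ and $\delta_p$, we could pick $\sigma \in \Gal(\overline{K}/K)$ with $\sigma(\alpha) = \theta_p$, and applying $\sigma$ to the identity $\delta_p(\alpha,1) = 0$ (using that $\delta_p$ has coefficients in $K$) would yield $\delta_p(\theta_p,1) = 0$.

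However, Lemma \ref{lem:ref_T_diagonalization} states precisely that $\delta_p(\theta_p,1) \neq 0$, giving a contradiction. Hence $\Delta_p(s,t)$ and $\delta_p(s,t)$ share no root in $\mathbb{P}^1(\overline{K})$, and consequently $\res(\Delta_p(s,t),\delta_p(s,t)) \neq 0$, as required. There is no significant obstacle here: the entire content lies in Lemma \ref{lem:ref_T_diagonalization}, which rules out the degenerate case where the singular fibre $X_p$ would collapse to a double line (prevented by Lemma \ref{lem:Pic}\eqref{item:reduced}); the remaining step is the standard Galois-theoretic observation about roots of irreducible polynomials.
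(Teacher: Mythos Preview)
Your proof is correct and follows essentially the same approach as the paper: both reduce the non-vanishing of the resultant to Lemma~\ref{lem:ref_T_diagonalization} via the irreducibility of $\Delta_p$ over $K$. The paper phrases this as ``a common factor would force $\Delta_p \mid \delta_p$, hence $\delta_p(\theta_p,1)=0$'', while you use the equivalent language of common roots and Galois conjugates.
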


\begin{proof}
  If $\res(\Delta_p(s,t),\delta_p(s,t))=0$ then $\Delta_p(s,t)$ and $\delta_p(s,t)$ would have a non-constant common factor. Since $\Delta_p(s,t)$ is irreducible over $K$, this would imply that $\Delta_p(s,t) | \delta_p(s,t)$, and thus $\delta_p(\theta_p,1)=0$, contradicting Lemma \ref{lem:ref_T_diagonalization}.
\end{proof}

In the proof of the following lemma, we will modify $\www$ several times to ensure that its factorisation includes certain small primes. Recall that we already assume that $2\classrep\mid\www$. 

We need to introduce some notation to deal with the case when the polynomial $\Delta_{p}(x,1)$, for $p\in M$, is not monic. Let $b_p := \Delta_p(1,0)$ be the leading coefficient, and define the monic irreducible polynomial $\widetilde{\Delta}_p(x) := b_p^{\deg\Delta_p-1}\Delta_p(b_p^{-1}x,1)\in\OO_K[x]$. Moreover, let $\widetilde{\theta}_p := b_p\theta_p$. Then $\widetilde\theta_p$ is a root of $\widetilde\Delta_p$, and thus an algebraic integer. Moreover, $K(p) = K(\widetilde\theta_p)$. 

Recall, that for nonzero ideals $\aaa$ of $\OO_K$ and $\mathfrak{B}$ of $\OO_{K(p)}$, we write
\begin{equation*}
  \aaa^\flat = \prod_{\ppp\nmid\www}\ppp^{v_\ppp(\aaa)},\quad \mathfrak{B}^\flat = \prod_{\PPP\nmid\www\OO_{K(p)}}\PPP^{v_\PPP(\mathfrak{B})},
\end{equation*}
for the prime-to-$\www$ parts of $\aaa$, respectively $\mathfrak{B}$.

\begin{lemma}\label{lem:detector}
  Let $\ppp\nmid\www$ be a prime ideal of $\OO_K$ and $s,t\in\OO_K$ with $s\OO_K + t\OO_K=\classrep$ such that $\Delta(s,t)\neq 0$.
  \begin{enumerate}
  \item If $\ppp\mid\Delta(s,t)$, then there is a unique $p\in M$ with $\ppp\mid\Delta_p(s,t)$. Moreover, $\ppp\nmid\delta_p(s,t)$.
  \end{enumerate}
  Now assume that $\ppp\mid \Delta_p(s,t)$ for some $p\in M$, and $\ppp\nmid\www$.
  \begin{enumerate}
  \setcounter{enumi}{1}
  \item There is a unique prime ideal $\PPP$ of $\OO_{K(p)}$ above $\ppp$ that divides $b_ps-\widetilde\theta_p t$.
  \item The prime ideal $\PPP$ satisfies $\OO_{K(p)}/\PPP\cong\OO_K/\ppp$ and $v_\ppp(\Delta_p(s,t)) = v_\PPP(b_p s - \widetilde\theta_p t)$. 
  \item The element $\delta_p(\theta_p,1)\in K(p)$ is defined modulo $\PPP$, and $\qrp{\delta_p(s,t)} = \qr{\delta_p(\theta_p,1)}{\PPP}$. 
  \item The rank of $Q_{(s,t)}$ over $\OO_K/\ppp$ is $2$, and $\chi_\ppp(Q_{(s,t)}) = \qrp{\delta_p(s,t)}$. In particular, we have $\chi_\ppp(Q_{(s,t)})=1$ when $X_p$ is split.
 \item The identity $\qr{\delta_p(s,t)}{\Delta_p(s,t)^\flat} = \qr{\delta_p(\theta_p,1)}{(b_ps-\widetilde\theta_p t)^\flat}$ holds between Jacobi symbols over $K$ and $K(p)$.
  \end{enumerate}
\end{lemma}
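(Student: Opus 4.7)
\medskip

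\noindent\textbf{Plan.} The strategy is to enlarge $\www$ by a fixed finite set of ``bad'' prime ideals so that, for every remaining prime $\ppp \nmid \www$, the various arithmetic objects attached to the conic bundle reduce faithfully modulo $\ppp$. Concretely, I would insist that $\www$ be divisible by every prime dividing any of the following: the resultants $\res(\Delta_p,\Delta_{p'})$ for $p\neq p'$, the resultants $\res(\Delta_p,\delta_p)$ (which are nonzero by the previous lemma), the discriminant $\disc(\widetilde\Delta_p)$, the leading coefficient $b_p$, the conductor of $\OO_K[\widetilde\theta_p]$ in $\OO_{K(p)}$, a representative $(v_0,v_1,v_2)$ of the singular point of $C_{(\theta_p,1)}$ (so that its first non-zero coordinate remains non-zero mod $\PPP$), and all $2\times 2$ minors of the matrix of $Q_{(\theta_p,1)}$ (to force rank exactly $2$ upon reduction). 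With these finitely many primes absorbed into $\www$, the six assertions will follow from a single algebraic identity together with standard Kummer-Dedekind factorisation.

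\medskip

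\noindent\textbf{Assertions (1)--(3).} For (1), the uniqueness of $p\in M$ with $\ppp\mid\Delta_p(s,t)$ is immediate from the resultant condition on distinct factors of $\Delta(s,t)$, and $\ppp\nmid\delta_p(s,t)$ follows from $\ppp\nmid\res(\Delta_p,\delta_p)$. For (2) and (3), the key identity is
\begin{equation*}
N_{K(p)/K}(b_p s - \widetilde\theta_p t) \;=\; \prod_{i} (b_p s - \widetilde\theta_p^{(i)} t) \;=\; b_p^{\,n-1}\,\Delta_p(s,t),
\end{equation*}
where $n=\deg\Delta_p$ and the $\widetilde\theta_p^{(i)}$ are the conjugates of $\widetilde\theta_p$. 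Under our hypotheses on $\www$, the order $\OO_K[\widetilde\theta_p]$ is maximal at $\ppp$, so primes $\PPP$ of $\OO_{K(p)}$ above $\ppp$ correspond bijectively to irreducible factors of $\widetilde\Delta_p\bmod\ppp$, which is a separable polynomial. Because $s\OO_K+t\OO_K=\classrep\mid\www$, at least one of $s,t$ is a unit modulo $\ppp$; say $\ppp\nmid t$. Then $\widetilde\Delta_p(b_ps/t)\equiv 0\bmod\ppp$, so $b_ps/t\bmod\ppp$ corresponds to a unique linear factor of $\widetilde\Delta_p\bmod\ppp$, hence to a unique prime $\PPP\mid\ppp$ of residue degree $1$; and for this $\PPP$ we have $\widetilde\theta_p\equiv b_ps/t\bmod\PPP$, i.e.\ $\PPP\mid b_ps-\widetilde\theta_p t$. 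The valuation identity in (3) then follows by comparing $\ppp$-adic valuations of both sides of the norm identity, using that all other primes above $\ppp$ do not divide $b_ps-\widetilde\theta_p t$ and that $\PPP$ has $e=f=1$.

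\medskip

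\noindent\textbf{Assertions (4)--(6).} For (4), since $\ppp\nmid b_p$, the element $\theta_p=\widetilde\theta_p/b_p$ has a well-defined residue modulo $\PPP$, equal to $s/t\bmod\ppp$ under the isomorphism $\OO_K/\ppp\cong\OO_{K(p)}/\PPP$ from (3). As $\delta_p$ is a binary form of even degree (by construction of $i_p$ and the shape of $Q_{(s,t)}$), we have
\begin{equation*}
\delta_p(s,t) \;\equiv\; t^{\deg\delta_p}\,\delta_p(\theta_p,1) \pmod{\PPP},
\end{equation*}
and $t^{\deg\delta_p}$ is a square in $\F_\ppp^\times$, so the two Legendre symbols coincide. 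For (5), the rank $2$ claim holds because, by our enlargement of $\www$, the reduction of the matrix of $Q_{(\theta_p,1)}$ modulo $\PPP$ still has rank $2$, and this reduction agrees (after clearing denominators in $t$) with that of $Q_{(s,t)}$ modulo $\ppp$. The singular point of the reduced conic is the reduction of $\vv_p$; since its $i_p$-th coordinate is a unit mod $\PPP$, the hyperplane $x_{i_p}=0$ avoids the singularity and meets $C_{(s,t)}\bmod\ppp$ in two points whose $\F_\ppp$-rationality is governed by $\qrp{\delta_p(s,t)}$. Thus the conic is split over $\F_\ppp$ iff $\qrp{\delta_p(s,t)}=1$, which gives $\chi_\ppp(Q_{(s,t)})=\qrp{\delta_p(s,t)}$; the ``in particular'' follows from (4) together with Lemma \ref{lem:ref_T_diagonalization}. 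Finally, (6) is obtained by factoring $\Delta_p(s,t)^\flat=\prod_\ppp\ppp^{v_\ppp(\Delta_p(s,t))}$, matching each $\ppp$ with its unique $\PPP$ via (2)--(3) to get $(b_ps-\widetilde\theta_p t)^\flat=\prod_\ppp\PPP^{v_\PPP(b_ps-\widetilde\theta_p t)}$, and multiplying the local identities from (4).

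\medskip

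\noindent\textbf{Main obstacle.} The principal difficulty is the bookkeeping in (2) and (3): one must control the non-monic situation via $\widetilde\Delta_p$, verify that the finitely many arithmetic obstructions (conductor, discriminants, coefficient of the singular point, low-rank locus) really are absorbed by adjoining a finite set of primes to $\www$, and then cleanly relate $\ppp$-valuations on $\OO_K$ to $\PPP$-valuations on $\OO_{K(p)}$. Once this correspondence is in place, (4)--(6) are formal consequences of (2)--(3) together with the fact that $\delta_p$ has even degree, which ensures that the Jacobi symbols over $K$ and over $K(p)$ really do match up factor by factor.
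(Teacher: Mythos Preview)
Your proposal is correct and follows essentially the same approach as the paper: enlarge $\www$ by a finite set of bad primes (resultants, leading coefficients, conductor, the coordinate $v_{i_p}$), then use Kummer--Dedekind for (2)--(3), the even degree of $\delta_p$ for (4), and multiply the local identities for (6). Two small imprecisions worth fixing: first, the statement ``say $\ppp\nmid t$'' is not a WLOG but a forced consequence---since $\ppp\nmid b_p$ and $\ppp\mid\Delta_p(s,t)$, the congruence $\Delta_p(s,t)\equiv b_p s^{n}\bmod\ppp$ when $\ppp\mid t$ would contradict $\ppp\mid\Delta_p(s,t)$; second, you should also include the primes dividing the constant $a$ from the factorisation $a\Delta(s,t)=\prod_p\Delta_p(s,t)$, as otherwise existence in (1) is not guaranteed. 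Your condition on all $2\times 2$ minors of $Q_{(\theta_p,1)}$ is redundant (and these lie in $K(p)$, not $K$): the paper simply notes that the rank is $2$ because $\ppp\nmid\delta_p(s,t)$ and $\delta_p(s,t)$ is already a $2\times 2$ minor of the matrix of $Q_{(s,t)}$.
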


\begin{proof}
Let us modify $\www$ to ensure it is divisible by all prime ideals dividing
  \begin{equation*}
a\prod_{p_1\neq p_2\in M}\res(\Delta_{p_1},\Delta_{p_2})\prod_{p\in M}\res(\Delta_p,\delta_p)\prod_{p\in M}b_p.
\end{equation*}
Then (1) is clear, and moreover the condition $s\OO_K+t\OO_K = \classrep\mid\www$ implies $\ppp\nmid t$. Hence, $st^{-1}\bmod \ppp$ exists and is a root of the polynomial $\Delta_p(x,1)$ modulo $\ppp$. Then $b_pst^{-1}$ is a root of $\widetilde{\Delta}_p(x)$ modulo $\ppp$. A further modification of $\www$ allows us to assume that $\ppp\OO_{K(\theta_p)}$ is coprime to the conductor of the order $\OO_K[\widetilde\theta_p]$ in $K(\theta_p)$. Then every prime $\PPP$ of $\OO_{K(p)}$ above $\ppp$ is uniquely determined by $\PPP\cap \OO_K[\widetilde\theta_p]$, and 
$\OO_K[\widetilde\theta_p]/(\PPP\cap \OO_K[\widetilde\theta_p])\cong \OO_{K(p)}/\PPP$.
Since 
\begin{equation*}
\ppp \mid b_p^{\deg\Delta_p-1}\Delta_p(s,t) = b_p^{\deg\Delta_p}\prod_{\substack{\sigma: K(p) \to \overline K\\K\text{-embedding}}}(s-\theta_p^\sigma t) = N_{K(p)|K}(b_ps-\widetilde\theta_p t),
\end{equation*}
there are prime ideals above $\ppp$ that divide $b_ps-\widetilde\theta_p t$. Let $\PPP$ be any such prime ideal. Then $b_pst^{-1}\equiv \widetilde\theta_p \bmod\PPP$, so the natural embedding $\OO_K/\ppp \hookrightarrow \OO_K[\widetilde\theta_p]/(\PPP\cap\OO_K[\widetilde\theta_p])$ is an isomorphism. The inverse is induced by the map $\OO_K[\widetilde\theta_p]\to \OO_K/\ppp$, $\widetilde\theta_p\mapsto b_pst^{-1}\bmod\ppp$, so $\PPP\cap\OO_K[\widetilde\theta_p]$ is uniquely determined as the kernel of this map. This shows $(2)$.

For $(3)$, we have already shown that $\OO_K/\ppp\cong\OO_K[\widetilde\theta_p]/(\PPP\cap\widetilde\theta_p)\cong\OO_{K(\theta_p)}/\PPP$. Splitting off roots, we see that
\begin{equation*}
  b_p^{\deg\Delta_p-1}\Delta_p(s,t) = (b_p s - \widetilde\theta_p t)R_p(s,t)\text{ and }\widetilde\Delta_p(x) = (x-\widetilde\theta_p)R_p(b_p^{-1}x,1),
\end{equation*}
for some binary form $R_p$ over $\OO_{K(p)}$ of degree $\deg\Delta_p-1$. Suppose that $v_\PPP(R_p(s,t))>0$. Then 
\begin{equation*}
  R_p(st^{-1},1) = t^{-\deg\Delta_p+1}R_p(s,t)=0,  
\end{equation*}
so $b_pst^{-1}$ is a multiple root of $\widetilde\Delta_p(x)$ in $\OO_{K(p)}/\PPP \cong \OO_K/\ppp$. Hence, $\ppp$ is ramified in $K(p)$, which we can exclude by modifying $\www$. Therefore, we may assume that $v_\PPP(R_p(s,t))=0$, and thus
\begin{equation*}
  v_\ppp(\Delta_p(s,t)) = v_\PPP(\Delta_p(s,t)) = v_\PPP(b_p s - \widetilde\theta_p t).
\end{equation*}

For $(4)$, we invert $b_p$ to obtain $\theta_p\equiv st^{-1}\bmod\PPP$. Thus, $\delta_p(\theta_p,1)$ exists modulo $\PPP$, and
\begin{equation*}
  \qr{\delta_p(\theta_p,1)}{\PPP}=\qr{\delta_p(st^{-1},1)}{\PPP}=\qr{\delta_p(st^{-1},1)}{\ppp} = \qrp{\delta_p(s,t)}.
\end{equation*}
In the last equality, we used the fact that $\delta_p$ has even degree. 

Let us prove $(5)$. The assertion about the rank follows, since $\ppp\nmid\delta_\ppp(s,t)$. The conics $C_{(s,t)}$ and $C_{(st^{-1},1)}$ are isomorphic modulo $\ppp$ due to \eqref{eq:homogeneity condition} and since $t$ is invertible modulo $\ppp$. Hence,
\begin{equation*}
  \chi_\ppp(Q_{(s,t)}) =\chi_\ppp(Q_{(st^{-1},1)}) = \chi_\PPP(Q_{(\theta_p,1)}).
\end{equation*}
In the proof of Lemma \ref{lem:ref_T_diagonalization}, we have seen a transformation $x = Ty$ giving (w.l.o.g with $i_p=0$)
\begin{equation*}
  Q_{(\theta_p,1)}(\vx) = v_0^2Q_{(\theta_p,1)}(0,y_1,y_2).
\end{equation*}
By including $\www$ in all prime ideals dividing $v_{i_p}$, we ensure that this transformation is valid modulo $\PPP$, and thus $\chi_\PPP(Q_{(\theta_p,1)}) = \chi_\PPP(Q_{(\theta_p,1)}(0,y_1,y_2))$. Since $\chi_\PPP(Q) \in \{ \pm 1 \}$ depending on whether $Q$ is split or non-split modulo $\PPP$, we see that 
\begin{equation*}
  \chi_{\PPP}(Q_{(\theta_p,1)}(0,y_1,y_2)) = \qr{\delta_p(\theta_p,1)}{\PPP}= \qrp{\delta_p(s,t)}.
\end{equation*}

To prove $(6)$, we see that by $(2)$, $(3)$ and $(4)$,
\begin{equation*}
  \qr{\delta_p(s,t)}{\Delta_p(s,t)^\flat} = \prod_{\substack{\ppp\mid\Delta_p(s,t)\\\ppp\nmid\www}}\qrp{\delta_p(s,t)}^{v_\ppp(\Delta_p(s,t))}=\prod_{\substack{\PPP\mid (b_p s-\widetilde\theta_p t)\\\PPP\nmid\www}}\qr{\delta_p(\theta_p,1)}{\PPP}^{v_\PPP(b_p s -\widetilde\theta_p t)}.  \qedhere
\end{equation*}
\end{proof}

\subsection{Estimates for the local densities}
\begin{lemma}\label{lem:local_densities_estimate}
  Let $s,t \in \OO_K$ with $s\OO_K + t\OO_K = \classrep$ and such that $\Delta(s,t)\neq 0$. Let $\ppp$ be a prime ideal of $\OO_K$ with $\ppp\nmid \www$ and $\ppp \mid \Delta(s,t)$. Then there is a unique $p\in M$ such that $\ppp\mid\Delta_p(s,t)$, and moreover
\begin{equation*}
  \sigma_\ppp(s,t)\geq \left(1 -\frac{2}{\norm\ppp}\right)\sum_{k=0}^{v_\ppp(\Delta_p(s,t))}\qrp{\delta_p(s,t)}^k.
\end{equation*}
If $X_p$ is split, this implies in particular that
\begin{equation*}
  \sigma_\ppp(s,t) \geq \left(1-\frac{2}{\norm\ppp}\right)\left(1+v_\ppp(\Delta_p(s,t))\right).
\end{equation*}
\end{lemma}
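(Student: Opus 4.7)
The plan is to deduce this directly from Proposition \ref{prop:local density bounds} applied to the ternary form $Q_{(s,t)}$ over $\OO_K$, after identifying all of its ingredients with the quantities appearing in the statement via Lemma \ref{lem:detector}.

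First, the uniqueness of $p \in M$ with $\ppp \mid \Delta_p(s,t)$ and the fact that $\ppp \nmid \delta_p(s,t)$ are exactly part (1) of Lemma \ref{lem:detector}, so nothing more is needed there. Since $2 \mid \www$ and $\ppp \nmid \www$, we have $\ppp \nmid 2$, and $Q_{(s,t)}$ is nondegenerate over $K$ because $\Delta(s,t) \neq 0$ (the discriminant of $Q_{(s,t)}$, as defined in the Notation section, differs from $\Delta_\pi(s,t)$ only by a factor of $-4$, and from $\Delta(s,t)$ only by a unit once $\ppp \nmid 2$). Part (5) of Lemma \ref{lem:detector} guarantees that the rank of $Q_{(s,t)}$ over $\FF_\ppp$ is exactly $2$. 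Thus all hypotheses of Proposition \ref{prop:local density bounds} are met and we may apply it to obtain
\begin{equation*}
  \sigma_\ppp(s,t) \;\geq\; \left(1 - \frac{2}{\norm\ppp}\right)\sum_{k=0}^{v_\ppp(\Delta_{Q_{(s,t)}})} \chi_\ppp(Q_{(s,t)})^k.
\end{equation*}

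Next I would match up the quantities appearing in this bound with those in the statement. The factorisation $a\Delta(s,t) = \prod_{q\in M}\Delta_q(s,t)$ combined with the convention (already absorbed into $\www$ in the proof of Lemma \ref{lem:detector}) that $\www$ contains every prime dividing $a$ and every prime dividing $\res(\Delta_{p_1},\Delta_{p_2})$ for $p_1\neq p_2$ forces $v_\ppp(\Delta_q(s,t)) = 0$ for every $q\neq p$; hence $v_\ppp(\Delta(s,t)) = v_\ppp(\Delta_p(s,t))$, and since $\ppp\nmid 2a$ this equals $v_\ppp(\Delta_{Q_{(s,t)}})$. Lemma \ref{lem:detector}(5) identifies $\chi_\ppp(Q_{(s,t)}) = \qrp{\delta_p(s,t)}$. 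Substituting these two identifications into the lower bound yields
\begin{equation*}
  \sigma_\ppp(s,t) \;\geq\; \left(1-\frac{2}{\norm\ppp}\right)\sum_{k=0}^{v_\ppp(\Delta_p(s,t))}\qrp{\delta_p(s,t)}^k,
\end{equation*}
which is the first asserted inequality.

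For the final sentence, the last clause of Lemma \ref{lem:detector}(5) states that $\chi_\ppp(Q_{(s,t)}) = 1$ whenever $X_p$ is split, so each summand on the right-hand side above equals $1$ and the sum collapses to $1+v_\ppp(\Delta_p(s,t))$, giving the second stated inequality. There is no real obstacle here beyond bookkeeping: the potentially delicate step is making sure that the normalisations of the various discriminants (the form-theoretic $\Delta_{Q_{(s,t)}}$ versus the geometric $\Delta_\pi$ versus the chosen $\Delta(s,t) = -4\Delta_\pi$) agree $\ppp$-adically, which is automatic once $\ppp\nmid 2a$, conditions already guaranteed by $\ppp\nmid\www$.
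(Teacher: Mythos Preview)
Your proposal is correct and follows exactly the same route as the paper: verify the hypotheses of Proposition~\ref{prop:local density bounds} via Lemma~\ref{lem:detector}, then identify $v_\ppp(\Delta_{Q_{(s,t)}})$ with $v_\ppp(\Delta_p(s,t))$ and $\chi_\ppp(Q_{(s,t)})$ with $\qrp{\delta_p(s,t)}$. In fact $\Delta_{Q_{(s,t)}} = -4\Delta_\pi(s,t) = \Delta(s,t)$ exactly (not merely up to a $\ppp$-adic unit), so your bookkeeping about the discriminant normalisations is even slightly more cautious than necessary.
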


\begin{proof}
  Take $\ppp$ as in Lemma \ref{lem:detector}. Then all hypotheses of Proposition \ref{prop:local density bounds} are satisfied, and the first inequality follows. The second one is an immediate consequence. 
\end{proof}

Recall the definition of $r(s,t)$ in \S\ref{sec:conic bundles and divisors}.

\begin{lemma}
  For $s,t\in\OO_K$ with $s\OO_K+t\OO_K = \classrep$ and such that $\Delta(s,t)\neq 0$, we have
  \begin{equation*}
    \prod_{\ppp\nmid\www}\sigma_\ppp(s,t) \gg r(s,t).
  \end{equation*}
\end{lemma}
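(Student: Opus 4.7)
The plan is to split the product $\prod_{\ppp\nmid\www}\sigma_\ppp(s,t)$ according to whether $\ppp$ divides $\Delta(s,t)$, and then to recognise the resulting expression as $r(s,t)$ up to an absolute constant.

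First I would treat the primes $\ppp\nmid\www\Delta(s,t)$. By Lemma \ref{lem:local_smooth} we have $\sigma_\ppp(s,t) = 1 - 1/\norm\ppp^2$ for every such $\ppp$, so
\[
\prod_{\substack{\ppp\nmid\www\\\ppp\nmid\Delta(s,t)}}\sigma_\ppp(s,t) \geq \prod_{\ppp}\left(1-\frac{1}{\norm\ppp^2}\right) = \zeta_K(2)^{-1},
\]
a positive constant depending only on $K$.

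Next I handle the primes $\ppp\mid\Delta(s,t)$ with $\ppp\nmid\www$. By Lemma \ref{lem:detector}(1), each such $\ppp$ divides $\Delta_p(s,t)$ for a \emph{unique} $p\in M$, so the set of these primes partitions disjointly over $p\in M$. Applying Lemma \ref{lem:local_densities_estimate} to each $\ppp$ and grouping gives
\[
\prod_{\substack{\ppp\mid\Delta(s,t)\\\ppp\nmid\www}}\sigma_\ppp(s,t) \;\geq\; \prod_{p\in M}\prod_{\substack{\ppp\mid\Delta_p(s,t)\\\ppp\nmid\www}}\left(1-\frac{2}{\norm\ppp}\right)\sum_{k=0}^{v_\ppp(\Delta_p(s,t))}\qrp{\delta_p(s,t)}^k.
\]
Since $2\mid\www$, every prime $\ppp$ appearing above is odd, and so the inner product of $(1-2/\norm\ppp)$ over $\ppp\mid\Delta_p(s,t)^\flat$ equals exactly $\one_f(\Delta_p(s,t)^\flat)$ by the definition of $\one_f$ and of the $\flat$-part.

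The final step is to recognise the character sum. The Jacobi symbol $\ddd\mapsto \qr{\delta_p(s,t)}{\ddd}$ is completely multiplicative in $\ddd$, so the divisor sum $\sum_{\ddd\mid\Delta_p(s,t)^\flat}\qr{\delta_p(s,t)}{\ddd}$ factors into an Euler product over primes $\ppp\mid\Delta_p(s,t)^\flat$, whose local factor at $\ppp$ is precisely $\sum_{k=0}^{v_\ppp(\Delta_p(s,t))}\qrp{\delta_p(s,t)}^k$. Each such local factor is non-negative (it is either $v_\ppp(\Delta_p(s,t))+1$, or $0$, or $1$), so this multiplicative decomposition is valid and the product above is exactly $\prod_{p\in M}\one_f(\Delta_p(s,t)^\flat)\sum_{\ddd\mid\Delta_p(s,t)^\flat}\qr{\delta_p(s,t)}{\ddd} = r(s,t)$. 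Combining with the first step yields the claim.

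I do not expect a serious obstacle here: the proof is essentially a bookkeeping step assembling Lemmas \ref{lem:local_smooth}, \ref{lem:detector} and \ref{lem:local_densities_estimate}. The only point that requires care is ensuring that the inequalities multiply in the correct direction, which is guaranteed because all the local factors $\sum_{k}\qrp{\delta_p(s,t)}^k$ are non-negative integers.
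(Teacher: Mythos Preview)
Your proof is correct and follows essentially the same approach as the paper: split the product over $\ppp\nmid\www$ according to whether $\ppp\mid\Delta(s,t)$, invoke Lemma~\ref{lem:local_smooth} and Lemma~\ref{lem:local_densities_estimate}, and identify the result with $r(s,t)$. Your version is in fact slightly more streamlined: the paper separates the split and non-split fibres $X_p$ and appeals to an external lemma (\cite[Lemma~3.2]{divsumpub}) to verify that $\qr{\delta_p(s,t)}{\ddd}=1$ in the split case, whereas you use the first inequality of Lemma~\ref{lem:local_densities_estimate} uniformly and recover the divisor sum directly via the Euler-product factorisation of $\sum_{\ddd\mid\Delta_p(s,t)^\flat}\qr{\delta_p(s,t)}{\ddd}$, which avoids the case distinction entirely.
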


\begin{proof}
  For any prime ideal $\ppp\nmid\Delta(s,t)\www$, we have
  \begin{equation*}
    \sigma_\ppp(s,t)=1-\frac{1}{\norm\ppp^2},
  \end{equation*}
  according to Lemma \ref{lem:local_smooth}. Together with the estimations from Lemma \ref{lem:local_densities_estimate}, this shows that the product on the left-hand side in the lemma is
  \begin{equation*}
    \gg \prod_{\substack{p\in M\\X_p \text{ split}}}\prod_{\substack{\ppp\mid\Delta_p(s,t)\\\ppp\nmid \www}}(1+v_\ppp(\Delta_p(s,t)))\left(1-\frac{2}{\norm\ppp}\right)\hspace{-0.4cm}\prod_{\substack{p\in M\\X_p \text{ non-split}}}\prod_{\substack{\ppp\mid\Delta_p(s,t)\\\ppp\nmid \www}}\left(1-\frac{2}{\norm\ppp}\right)\sum_{k=0}^{v_\ppp(\Delta_p(s,t))}\qrp{\delta_p(s,t)}^k.
  \end{equation*}
For $X_p$ split, we have
\begin{equation*}
  \prod_{\substack{\ppp\mid\Delta_p(s,t)\\\ppp\nmid \www}}(1+v_\ppp(\Delta_\ppp(s,t))) = \sum_{\ddd\mid\Delta_p(s,t)^\flat}1 = \sum_{\ddd\mid\Delta_p(s,t)^\flat}\qr{\delta_p(s,t)}{\ddd},
\end{equation*}
  by Lemma \ref{lem:ref_T_diagonalization} and \cite[Lemma 3.2]{divsumpub}.
\end{proof}

By \eqref{eq:desired bound 2}, it is thus our goal to prove the estimate
\begin{equation}\label{eq:desired bound r2d2}
  \sum_{\substack{(s,t)\in \classrep^2\cap X\mathcal{D}\\s\OO_K + t\OO_K = \classrep\\(s,t)\equiv (s_0,t_0)\bmod \www^l}}r(s,t)\gg_{\mathcal{D},\www} X^{2\dg}(\log X)^{\card\{p\in M\where X_p\text{ split}\}}, 
\end{equation}
for $X>1$, where $\mathcal{D}$ is as in Proposition \ref{prop:local density arch bound} and $l$ as in Corollary \ref{cor:local density nonarch bound}. This is exactly \eqref{eq:div sum bound} for $\c P = (\mathcal{D}, (s_0,t_0), \www^l)$, so all that is left to show is that the tuple $\c P$ is admissible. Conditions \eqref{eq:f not zero} and \eqref{eq:D not zero} follow immediately from part $(3)$ of Proposition \ref{prop:local density arch bound}. For condition \eqref{eq:qr is one}, we have the following lemma.

\begin{lemma}\label{lem:quad rep}
  Let $(s,t)\in\classrep^2\cap X\mathcal{D}$, for $X>1$, with $s\OO_K + t\OO_K = \classrep$ and $(s,t)\equiv (s_0,t_0)\bmod\www^l$. Then
  \begin{equation*}
    \qr{\delta_p(s,t)}{\Delta_p(s,t)^\flat} = 1
  \end{equation*}
 holds for all
 $p\in M$.
\end{lemma}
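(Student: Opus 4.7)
The plan is, after applying Lemma~\ref{lem:detector}(6) to rewrite the left-hand side as $\qr{\alpha}{\beta^\flat}$ with $\alpha = \delta_p(\theta_p, 1) \in K(p)^\times$ and $\beta = b_p s - \widetilde\theta_p t \in \OO_{K(p)}$, to use the global product formula for Hilbert symbols in $L=K(p)$ to transport the value of the symbol from $(s,t)$ to the reference point $(s_0,t_0)$, and then to deduce its triviality from the existence of a rational point on the smooth fibre $C_{(s_0,t_0)}$.

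First I would enlarge $\www$ once more to include all rational primes below $2N_{L/K}(\alpha)$, all primes ramifying in $L/K$, and the finitely many exceptions arising below. Then $\PPP \nmid 2\alpha$ for every $\PPP \mid \beta^\flat$, so $(\alpha,\beta)_\PPP = \qr{\alpha}{\PPP}^{v_\PPP(\beta)}$, and the global product formula $\prod_v(\alpha,\beta)_v = 1$ rearranges to
\[
\qr{\alpha}{\beta^\flat} = \prod_{\PPP \in S}(\alpha,\beta)_\PPP, \qquad S := \{\text{archimedean}\}\cup\{\PPP\mid\www\OO_L\}.
\]
I would then show that each local factor is unchanged when $(s,t)$ is replaced by $(s_0,t_0)$, writing $\beta_0 := b_p s_0 - \widetilde\theta_p t_0$. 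At a real $\PPP$ above a real place $v$ of $K$, the element $\beta$ realises, through the embedding $L \hookrightarrow L_\PPP = \RR$, one of the real linear factors of $b_p^{\deg\Delta_p - 1}\Delta_p$ over $K_v$; Proposition~\ref{prop:local density arch bound}(4) preserves its sign across $(s,t)\in X\mathcal{D}$, and hence $(\alpha,\beta)_\PPP = (\alpha,\beta_0)_\PPP$. At a finite $\PPP\mid\www$, continuity of the Hilbert symbol lets one take $l$ large enough, uniformly in $\PPP$, so that the congruence $(s,t)\equiv(s_0,t_0)\bmod\www^l$ forces $\beta/\beta_0 \in (L_\PPP^\times)^2$. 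Running the reciprocity argument in reverse then converts this into
\[
\qr{\delta_p(s,t)}{\Delta_p(s,t)^\flat} = \qr{\delta_p(s_0,t_0)}{\Delta_p(s_0,t_0)^\flat}.
\]

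Finally I would close the argument by showing the right-hand side equals $1$: since $C_{(s_0,t_0)}(K)\neq \emptyset$, local solvability at every $\ppp\mid\Delta_p(s_0,t_0)$ with $\ppp\nmid\www$ gives $\sigma_\ppp(Q_{(s_0,t_0)}) > 0$, and Proposition~\ref{prop:local density bounds} then forces $v_\ppp(\Delta_p(s_0,t_0))$ to be even whenever $\chi_\ppp(Q_{(s_0,t_0)}) = -1$; Lemma~\ref{lem:detector}(5) identifies $\chi_\ppp(Q_{(s_0,t_0)})$ with $\qr{\delta_p(s_0,t_0)}{\ppp}$, so every factor $\qr{\delta_p(s_0,t_0)}{\ppp}^{v_\ppp(\Delta_p(s_0,t_0))}$ equals $+1$. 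The main technical obstacle will be the archimedean comparison step: extending real embeddings of $K$ to real embeddings of $L$ and correctly matching the resulting linear form $\beta$ with the appropriate real linear factor of $\Delta_p$ over $K_v$ requires careful bookkeeping before Proposition~\ref{prop:local density arch bound}(4) can be invoked.
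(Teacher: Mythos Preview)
Your proposal is correct and reaches the same conclusion, but the route diverges from the paper's after the common first step of invoking Lemma~\ref{lem:detector}(6). The paper enlarges $\www$ so that $N_{K(p)|K}\bigl(\delta_p(\theta_p,1)(b_ps_0-\widetilde\theta_pt_0)\bigr)$ divides it; then every prime factor of $\beta_0$ lies above $\www$ and one may write $\beta^\flat = (1+\xi)\OO_{K(p)}$ with $\xi$ divisible by $2^u\delta_p(\theta_p,1)\www$. A single application of quadratic reciprocity gives
\[
\qr{\delta_p(\theta_p,1)}{(1+\xi)}=\qr{1+\xi}{\delta_p(\theta_p,1)}\prod_{w\mid 2\infty}H_w\bigl(1+\xi,\delta_p(\theta_p,1)\bigr),
\]
and every factor is immediately $1$: the flipped Jacobi symbol because $1+\xi\equiv 1\bmod\delta_p(\theta_p,1)$, the archimedean Hilbert symbols because $1+\xi>0$ at every real place (via Proposition~\ref{prop:local density arch bound}), and the $2$-adic ones because $1+\xi$ is a local square by the choice of $u$. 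No separate computation at $(s_0,t_0)$ is required, and in particular the hypothesis $C_{(s_0,t_0)}(K)\neq\emptyset$ is never invoked at this point.

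Your approach instead transports the symbol to the reference point via the Hilbert-symbol product formula, and then deduces triviality at $(s_0,t_0)$ from local solvability together with the upper bound in Proposition~\ref{prop:local density bounds}. This is more conceptual---it makes transparent why the existence of a rational point on a smooth fibre forces admissibility condition~\eqref{eq:qr is one}---but at the cost of an extra step and of explicitly using the rational-point hypothesis. The paper's argument is shorter and self-contained once $\www$ has been suitably enlarged.
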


\begin{proof}
We may assume that $l\geq 2$. By part $(6)$ of Lemma \ref{lem:detector}, there are $a_p,b_p\in \OO_{K(p)}$, not both zero, such that
\begin{equation*}
    \qr{\delta_p(s,t)}{\Delta_p(s,t)^\flat} = \qr{\delta_p(\theta_p,1)}{(b_ps-a_pt)^\flat},
\end{equation*}
with the second Jacobi symbol taken over $\OO_{K(p)}$. Here, $b_pS-a_pT$ is a linear factor of $\Delta_p(S,T)$ in $\OO_{K(p)}[S,T]$.

Assume, possibly modifying $\www$, that $2^{u}N_{K(p)|K}(\delta_p(\theta_p,1)(b_ps_0-a_pt_0))\mid \www$, where $u$ is large enough to ensure that $1 + 2^{u}\OO_{K(p),w} \subseteq K(p)_w^2$ for every place $w$ of $K(p)$ above $2$. Such an $u$ exists, since squares are an open subgroup of $K(p)_w^\times$. Write $(s,t) = (s_0,t_0) + (v,w)$, where $v,w\in\www^l \subseteq 2^uN_{K(p)|K}(\delta_p(\theta_p,1)(b_ps_0-a_pt_0))\www$. Then
\begin{equation}\label{eq:enable qr}
  b_ps-a_pt = (b_ps_0-a_pt_0) + (b_pv-a_pw) = (b_ps_0-a_pt_0)(1 + \xi),
\end{equation}
with $\xi \in 2^u\delta_p(\theta_p,1)\www\OO_{K(p)}$, and therefore $(b_ps-a_pt)^\flat = (1+\xi)\OO_{K(p)}$. By quadratic reciprocity, we obtain
\begin{equation*}
  \qr{\delta_p(\theta_p,1)}{(b_p s-a_p t)^\flat}  = \qr{1+\xi}{\delta_p(\theta_p,1)}\prod_{w\mid 2\infty}H_w(1+\xi,\delta_p(\theta_p,1)) = \prod_{w\mid 2\infty}H_w(1+\xi,\delta_p(\theta_p,1)).
\end{equation*}
But $H_w(1+\xi,\delta_p(\theta_p,1))=1$ for all archimedean places $w$ of $K(p)$, due to \eqref{eq:enable qr} and Proposition \ref{prop:local density arch bound}, $(3)$. Moreover, $H_w(1+\xi,\delta_p(\theta_p,1))=1$ for all places $w$ dividing $2$ thanks to the fact that $1+\xi\in K(p)_w^2$ by our choice of $u$.
\end{proof}

This completes the proof of Theorem \ref{thm:conic_bundle_precise}, from which Theorem \ref{thm:conic bundle} follows.
For Theorem~\ref{thm:conic bundle small}, recall from \S\ref{sec:conic bundles} that any conic bundle of complexity at most $3$ with a rational 
point is rational, hence contains a rational point lying on a smooth fibre. Thus Theorem \ref{thm:conic bundle small} 
follows from Theorem \ref{thm:conic_bundle_precise} and  \cite[Thm.~1.1]{divsumpub}.
 \qed

\section{Del Pezzo surfaces} \label{sec:del_Pezzo}

In this section we study del Pezzo surfaces with a conic bundle structure. We give necessary and sufficient
criteria for a del Pezzo surface to admit a conic bundle, in terms of the Galois action on the lines.
We show how one deduces the results in \S \ref{sec:del pezzo} and \S \ref{sec:BT}
from Theorem \ref{thm:conic bundle small}, and also classify equations for del Pezzo surfaces with a conic bundle structure using the 
set-up \S \ref{sec:conic_bundles_geometry}. 

Throughout $k$ is a perfect field, assumed for simplicity to not have characteristic $2$ in \S\ref{sec:dp_equation}.

\subsection{Proof of Theorem \ref{thm:BT}}
Let $K$ be a number field and
$$X: \quad x_0^3 + x_1^3 + x_2^3 + x_3^3 = 0 \quad \subset \PP^3_K$$
the Fermat cubic surface over $K$. Let $\pi: X \to \PP^1$ be the conic bundle induced by the planes
containing the line $x_0+x_1=x_2+x_3=0$. A simple calculation using Lemma \ref{lem:Pic} shows that
$$
c(\pi) = 
\begin{cases}
	0, & \text{if } \QQ(\sqrt{-3}) \subseteq K, \\
	2, & \text{if } \QQ(\sqrt{-3}) \not\subset K.
\end{cases}, \quad
\rho(X) = 
\begin{cases}
	7, & \text{if } \QQ(\sqrt{-3}) \subseteq K, \\
	4, & \text{if } \QQ(\sqrt{-3}) \not\subset K.
\end{cases}
$$
In particular, the hypotheses of Theorem \ref{thm:conic bundle small} are satisfied. Let now $Y$ be as in
Theorem \ref{thm:BT} and, for $a=(a_0: a_1:a_2:a_3) \in \PP^3_K$, let $Y_a$ denote
the corresponding diagonal cubic surface.
An anticanonical height on $Y$ induces an anticanonical height on $Y_a$. Moreover, if every $a_i$
is a non-zero cube then $Y_a$ is isomorphic to the Fermat cubic surface. 
As the collection of such $a$ is Zariski dense,
the result follows from a simple application of Theorem \ref{thm:conic bundle small}. \qed

\subsection{Criterion for existence of a conic bundle}

We first show, as is well-known for $d \geq 3$, that any conic on a del Pezzo surface $X$ of degree $d$ gives rise to a conic bundle.
We define a \emph{conic} on $X$ to be a geometrically connected curve $C \subset X$
of arithmetic genus $0$ with $-K_X \cdot C = 2$.
\begin{lemma} \label{lem:existence_of_conic_bundle}
	Let $X$ be a del Pezzo surface over $k$ with a conic $C \subset X$. Then the linear system $|C|$ induces
	a conic bundle structure $X \to \PP^1$, and every conic bundle on $X$ arises this way (for some $C$).
\end{lemma}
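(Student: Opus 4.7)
The plan is to prove both directions by computing the linear system $|C|$ explicitly via Riemann--Roch, and then matching the resulting pencil with the conic bundle data.

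First, I would deduce numerical data: since $p_a(C) = 0$ and $-K_X \cdot C = 2$, adjunction immediately gives $C^2 = 0$. Riemann--Roch on $X$ then yields
\[
  \chi(\mathcal{O}_X(C)) = 1 + \tfrac{1}{2}(C^2 - K_X \cdot C) = 2.
\]
To upgrade this to $h^0(\mathcal{O}_X(C)) = 2$, I would dispose of the higher cohomology. For $H^2$, Serre duality reduces to showing $K_X - C$ has no sections; this is immediate from $(K_X - C) \cdot (-K_X) = -K_X^2 - 2 < 0$ combined with the ampleness of $-K_X$. For $H^1$, I would invoke Kodaira vanishing applied to $\mathcal{O}_X(C) = \mathcal{O}_X(K_X + (C - K_X))$, after checking that $C - K_X$ is ample. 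Since $-K_X$ is ample, this reduces to verifying that $C$ is nef, which I would do by a short case analysis over $\bar k$: either $C$ is an irreducible smooth rational curve with $C^2 = 0$ (automatically nef), or $C = C_1 + C_2$ with $C_i$ two $(-1)$-curves meeting transversally at one point (forced by $p_a(C) = 0$ and $-K_X \cdot C = 2$), in which case $C \cdot C_i = -1 + 1 = 0$ and $C$ is again nef.

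Next, I would extract a morphism. Since $|C|$ is a pencil, any two distinct members $D_1, D_2$ satisfy $D_1 \cdot D_2 = C^2 = 0$. To rule out a fixed component, I would write $|C| = F + |M|$ with $F$ the fixed part and $|M|$ the moving pencil, then use $M^2 \geq 0$, $F \cdot M \geq 0$, $F^2 \geq -F \cdot M$, together with $-K_X \cdot F \geq 0$ (effective divisor against ample class) and $-K_X \cdot M \geq 2$ (since $|M|$ still moves in a pencil), to force $F = 0$. Thus $|C|$ is base-point free and defines a morphism $\pi : X \to \PP^1$ (the target is $\PP^1$ because $h^0 = 2$). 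A general member of $|C|$ is a smooth irreducible conic by Bertini applied to this base-point-free pencil. For an arbitrary fibre $F$, the restriction $-K_X|_F$ is a degree-$2$ line bundle on a geometrically connected curve of arithmetic genus $0$, hence has $h^0 = 3$ and embeds $F$ in $\PP^2$ as a plane conic of degree $2$; combined with Lemma~\ref{lem:Pic}(\ref{item:reduced}) for reducedness, this shows $\pi$ is a conic bundle in the sense of Definition~\ref{def:conic_bundle}.

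For the converse, given a conic bundle $\pi : X \to \PP^1$, I would take $C$ to be a fibre (over a $k$-point, which exists after extending scalars; but the class of a fibre is defined over $k$ since it equals $\pi^*\OO_{\PP^1}(1)$). Any such $C$ is geometrically connected of arithmetic genus $0$ (being a plane conic), and adjunction on $X$ combined with $\omega_{\PP^1}^{-1}$ having degree $2$ on the generic fibre gives $-K_X \cdot C = 2$. Hence $C$ is a conic in our sense, and the pencil of fibres of $\pi$ is precisely $|C|$ by the first part, so $\pi$ is recovered from $|C|$.

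The main obstacle is the base-point-freeness step: the vanishing and pencil computation are routine, but ruling out a fixed part and verifying the fibre-by-fibre plane conic structure (including degenerate fibres) must be done carefully, and it is here that one genuinely uses the specifics of del Pezzo geometry rather than purely numerical intersection theory.
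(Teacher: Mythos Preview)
Your strategy is sound in characteristic zero, but it diverges from the paper's argument and introduces two avoidable issues.

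First, you invoke Kodaira vanishing to kill $h^1(X,\OO_X(C))$; the paper works over an arbitrary perfect field in this section, where Kodaira vanishing is not available. The paper instead uses the short exact sequence
\[
0 \to \OO_X \to \OO_X(C) \to \OO_C(C) \to 0
\]
together with the elementary fact $h^1(X,\OO_X)=0$ for del Pezzo surfaces (valid in all characteristics). This gives $h^0(X,\OO_X(C)) = 1 + h^0(C,\OO_C(C)) = 2$ directly, and the surjectivity of $H^0(X,\OO_X(C)) \to H^0(C,\OO_C(C))$ immediately yields base-point freeness, since $\OO_C(C)$ is a degree-$0$ bundle on a genus-$0$ curve with a nowhere-vanishing section. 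This bypasses your nef-plus-ample argument and your fixed-part analysis entirely. Your justification ``since $|M|$ still moves in a pencil'' for $-K_X\cdot M \geq 2$ is not quite right as stated; what you actually need is that once you've forced $M^2=0$, adjunction gives $-K_X\cdot M$ even and hence $\geq 2$.

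Second, your appeal to Lemma~\ref{lem:Pic}(\ref{item:reduced}) for reducedness of the fibres is circular: that lemma assumes $\pi$ is already a conic bundle in the sense of Definition~\ref{def:conic_bundle}. The paper avoids this by analysing each fibre $F$ directly: since $-K_X\cdot F = 2$ and $-K_X$ is ample, $F$ has at most two irreducible components (counted with multiplicity), and one checks that the double-line case $F=2L$ is excluded by adjunction. Your claim that $-K_X|_F$ embeds an arbitrary fibre as a plane conic is correct once reducedness is established, but you should prove reducedness from scratch rather than cite Lemma~\ref{lem:Pic}.
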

\begin{proof}
	Let $C \subset X$ be a conic.
	Consider the exact sequence of sheaves
	\begin{equation} \label{seq:conic}
		0 \to \OO_X \to \OO_X(C) \to \OO_C(C) \to 0
	\end{equation}
	on $X$. By the adjunction formula \cite[Ex.~V.1.3]{Har77} we have $C^2=0$ and, as is well-known, 
	we also have $h^1(X, \OO_X)=0$ \cite[Lem.~III.3.2.1]{Kol96}. Thus applying cohomology to \eqref{seq:conic},
	we find that $\dim |C| = 1$. As $C^2=0$ the linear system $|C|$ is base-point free, hence
	induces a morphism $\pi: X \to \PP^1$. To complete the proof, it suffices to show that the fibres of $\pi$ 
	are isomorphic to plane conics, for which we may assume that $k$ is algebraically closed.

	Let $F$ be a fibre of $\pi$, which is a 
	connected curve of arithmetic genus $0$. As $-K_X \cdot C = 2$ and $-K_X$ is ample, by Nakai-Moishezon 
	\cite[Thm.~V.1.10]{Har77} we see that $C$ has at most $2$ irreducible components. If $C$ is irreducible,
	then, as $p_a(C) = 0$, it is isomorphic to a smooth conic. Otherwise, we have
	$C = C_1 + C_2$ where $C_i$ are irreducible and satisfy $-K_X \cdot C_i = 1$,
	hence are $2$ lines on $X$ meeting in a point, as required.

	For the second part, let $\pi:X \to \PP^1$ be a conic bundle. Then each fibre $C$ has arithmetic genus $0$
	and satisfies $-K_X \cdot C = 2$ by Proposition \ref{prop:conic_bundle}, as required.
\end{proof}

We next obtain a criterion concerning the existence of a conic bundle on del Pezzo surfaces in terms
of the Galois action on the lines. This is required for the computation used in Remark~\ref{rem:Galois_actions} and the 
proof of Theorem~\ref{thm:rho}.

\begin{proposition} \label{prop:dp_conic_bundle}
	Let $X$ be a del Pezzo surface of degree $d\leq 7$ over $k$.
	If $X$ admits a conic bundle structure, then there exist two (possibly identical)
	non-empty Galois orbits $\mathfrak{L}_1$ and $\mathfrak{L}_2$ of lines 
	on $X_{\bar{k}}$ such that $(\mathfrak{L}_1 + \mathfrak{L}_2)^2 = 0$.
	If $X$ has a $0$-cycle of odd degree (e.g.~$d$ is odd or $X(k) \neq \emptyset$),
	then the converse holds.
\end{proposition}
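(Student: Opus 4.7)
My plan is to leverage Proposition~\ref{prop:conic_bundle}(\ref{item:intersections}), which tells us that a conic bundle fiber $F$ satisfies $F^2=0$, together with Lemma~\ref{lem:Pic}(\ref{item:reduced}), which forces singular geometric fibers of a conic bundle to consist of two lines meeting in a single point.

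For the forward direction, I would start from a conic bundle $\pi : X \to \PP^1$ and extract a singular fiber. Since $d \leq 7$, Proposition~\ref{prop:conic_bundle}(\ref{item:intersections}) yields $\deg \Delta_\pi = 8 - d \geq 1$, so there is at least one closed point $\bar P \in \PP^1_k$ above which the geometric fibers are reducible; each such geometric fiber is a union $L + L'$ of two lines. Letting $\mathfrak{L}_1,\mathfrak{L}_2$ be the Galois orbits of $L, L'$, Galois equivariance of $\pi$ identifies the $\bar k$-divisor $\pi^{-1}(\bar P)_{\bar k}$ either with $\mathfrak{L}_1 + \mathfrak{L}_2$ when the two orbits are distinct, or with $\mathfrak{L}_1 = \mathfrak{L}_2$ when an element of $G_k$ interchanges the two components; in either case $\mathfrak{L}_1+\mathfrak{L}_2$ is a positive integer multiple of the geometric fiber class, whose self-intersection vanishes by Proposition~\ref{prop:conic_bundle}(\ref{item:intersections}).

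For the converse, I would set $D := \mathfrak{L}_1+\mathfrak{L}_2 \in \Pic(X)$, noting that each $\mathfrak{L}_i$ is intrinsically a reduced effective $k$-divisor on $X$. Serre duality together with ampleness of $-K_X$ yields $h^2(D_{\bar k})=0$, so Riemann--Roch combined with flat base change produces $h^0(X, D) \geq 1 + (-K_X \cdot D)/2 \geq 2$. Letting $M$ denote the moving part of $|D|$, the inequality $M^2 \leq D^2 = 0$ together with nefness of $M$ gives $M^2 = 0$, so the rational map $\phi_{|M|}: X \dashrightarrow \PP^N_k$ has image of dimension at most one. Applying Stein factorisation yields a morphism $X \to C$ with geometrically connected fibers onto a smooth $k$-curve $C$; since $X_{\bar k}$ is rational, any non-constant morphism from $X_{\bar k}$ to a smooth projective curve must land in a curve of genus zero, so $C_{\bar k} \cong \PP^1$ and $C$ is a Brauer--Severi curve. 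Proper pushforward preserves the degree of $0$-cycles, producing an odd-degree $0$-cycle on $C$; combined with the fact that the Brauer class of a Brauer--Severi curve is $2$-torsion, this forces $C \cong \PP^1_k$. Finally, ampleness of $-K_X$ and adjunction, applied to the general fiber and to each irreducible component of any special fiber of $X \to \PP^1_k$, constrain the fibers to be either a smooth $\PP^1$ or a union $L_1 + L_2$ of two lines meeting in a single point; both are plane conics, so the morphism is indeed a conic bundle in the sense of Definition~\ref{def:conic_bundle}.

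The main obstacle will be the converse direction, specifically the passage from the single numerical equation $D^2 = 0$ to a genuine $k$-morphism onto a rational curve. Controlling the base locus of $|D|$ enough for Stein factorisation, and then using the $0$-cycle hypothesis to split the resulting Brauer--Severi curve, are the essential steps; once a $k$-morphism $X \to \PP^1_k$ is in hand, the subsequent fibral classification via adjunction is a routine consequence of $(-1)$-curve geometry on del Pezzo surfaces with $d \leq 7$.
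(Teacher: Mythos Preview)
Your forward direction is fine and matches the paper's argument.

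The converse has a genuine gap. You assert that the moving part $M$ of $|D|$ satisfies $M^2 \leq D^2 = 0$, but this inequality fails in general: on $X = \Bl_p \PP^2$ with $D = H + E$ (strict transform of a line plus the exceptional curve) one has $D^2 = 0$, yet $E$ is fixed in $|D|$ and the moving part $M = H$ has $M^2 = 1$. Writing $D = M + F$ gives $D^2 = M^2 + 2M\cdot F + F^2$ with $M\cdot F \geq 0$, but $F^2$ is typically negative, so no inequality between $M^2$ and $D^2$ follows. In the specific situation of the proposition it turns out that $|D|$ has no fixed part, but establishing this is precisely where the paper spends its effort: a direct combinatorial analysis of the numbers $n_i = L_{i,1}\cdot(\mathfrak{L}_i - L_{i,1})$ and $m_i = L_{i,1}\cdot\mathfrak{L}_j$, together with the Hodge index theorem, shows that either $\mathfrak{L}_1$ alone, or $\mathfrak{L}_1 + \mathfrak{L}_2$, is over $\bar k$ a positive multiple of a conic-bundle fibre class. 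Only after this structural result is in hand does Stein factorisation apply to an actual morphism, and only then does the odd-degree $0$-cycle argument identify the base curve with $\PP^1_k$.

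A related issue: you invoke Stein factorisation for the rational map $\phi_{|M|}$, but Stein factorisation requires a morphism. Without knowing $|M|$ is base-point-free you would have to blow up first, and the resulting map from the blow-up need not descend to $X$. The paper's combinatorial step resolves this simultaneously, since a multiple of a fibre class is already base-point-free by Lemma~\ref{lem:existence_of_conic_bundle}. In short, your Riemann--Roch shortcut does not bypass the intersection-theoretic analysis; it presupposes its conclusion.
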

\begin{proof}
	The first implication is elementary. Indeed, as $d\leq 7$ there is at least one $P \in \PP^1$
	for which $\pi^{-1}(P)$ is singular. If $\pi^{-1}(P)$ is non-split then one takes
	$\mathfrak{L}_1 = \mathfrak{L}_2 = \pi^{-1}(P)$. If $\pi^{-1}(P)$ is split then one takes
	$\mathfrak{L}_1$ and $\mathfrak{L}_2$ to be the irreducible components of $\pi^{-1}(P)$.

	We now show the reverse implication. For $i \in \{1,2\}$ write
	$\mathfrak{L}_i = L_{i,1} + \cdots + L_{i,r_i}$ and
	$n_i = L_{i,1} \cdot (\sum_{j \neq 1} L_{i,j})$.
	For $\{i,j\} = \{1,2\}$ let $m_i = L_{i,1} \cdot \mathfrak{L}_j$.
	Note that 
	\begin{equation} \label{eqn:r_im_i}
		r_1m_1 = r_2m_2.
	\end{equation}
	As $\mathfrak{L}_1$ and $\mathfrak{L}_2$ are Galois orbits, 
	from $(\mathfrak{L}_1 + \mathfrak{L}_2)^2 = 0$ we deduce that
	\begin{equation} \label{eqn:r_i}
		\mathfrak{L}_1^2 + \mathfrak{L}_2^2 + 2\mathfrak{L}_1\cdot\mathfrak{L}_2 
		= r_1(n_1-1) + r_2(n_2-1) + 2r_1m_1 = 0.
	\end{equation}
	There are two cases, corresponding to a non-split or a split fibre.
	First suppose that $\mathfrak{L}_1^2 = 0$.
	Then we find that $n_1=1$. Thus $r_1$ is even
	and $\mathfrak{L}_1 \times \bar{k}$ is a collection of $r_1$ lines meeting in pairs. Moreover,
	the Hodge index theorem \cite[Thm~V.1.9]{Har77} implies  that 
	each such pair is linearly equivalent over $\bar{k}$
	(any non-zero totally isotropic subspace of $(\Pic \bar{X}) \otimes \RR$
	is $1$-dimensional).
	It follows from Lemma \ref{lem:existence_of_conic_bundle} that over $\bar{k}$ we have $\mathfrak{L}_1 \times \bar{k} = (r_1/2)F$
	where $F$ is the fibre of a conic bundle. To determine the structure over $k$,
	consider the Stein factorisation of the morphism $f$ induced by $\mathfrak{L}_1$:
	\[\xymatrix{
	X \ar[rr]^f \ar[dr]^\pi & & \PP^n \\ 
	& C \ar[ur]^g &
	} \] 
	Here $\pi$ has connected fibres and $g$ is finite. 
	As Stein factorisation commutes with flat base-change \cite[Tag 03GX]{stacks-project},
	we see that $\pi \times \bar{k}$ is a conic
	bundle, that $C$ is a smooth curve of genus $0$
	and that the map $g$ has degree $r_1/2$ onto its image. However, as $X$ has a $0$-cycle of 
	odd degree, so does $C$ and hence $C \cong \PP^1$ by Riemann--Roch. Thus $\pi$ is a conic bundle in the sense of
	Definition \ref{def:conic_bundle} (here $\mathfrak{L}_1$ corresponds to a non-split fibre of $\pi$).
	
	For the second case, we may assume that $\mathfrak{L}_1^2\mathfrak{L}_2^2 \neq 0$.
	We deduce from \eqref{eqn:r_i} that $n_1 = n_2 = 0$. We obtain
	$$2r_1m_1 = r_1 + r_2.$$
	Using this to eliminate $r_2$ in \eqref{eqn:r_im_i}, we find that
	$$2m_1m_2 = m_1 + 1.$$
	By symmetry we obtain $m_1 = m_2 = 1$ and thus $r_1=r_2$. Hence each $\mathfrak{L}_i$
	consists of a Galois invariant collection of $r_1$ pairwise skew lines, and each line in $\mathfrak{L}_1$
	meets exactly one line in $\mathfrak{L}_2$.
	The proof now proceeds in a similar manner to the previous case. Namely, applying Stein factorisation
	to the morphism determined by $\mathfrak{L}_1 + \mathfrak{L}_2$ gives rise to
	a morphism $\pi:X \to C$ which is a conic bundle over $\bar{k}$. Since $X$ has a $0$-cycle of odd degree,
	we find that $C \cong \PP^1$, as required (here
	$\mathfrak{L}_1 + \mathfrak{L}_2$ corresponds to a split fibre of $\pi$).
\end{proof}

\subsection{Proof of Theorem \ref{thm:rho}}
Under the assumptions of Theorem \ref{thm:rho} we see that $X$ has a $0$-cycle of odd degree.
We claim that if $\rho(X) \geq \rho_d$,
then $X$ admits a conic bundle of complexity at most $3$. It seems difficult to prove this directly via conceptual geometric
arguments. However, the claim is easily verified upon enumerating all possible Galois actions in \texttt{Magma} and using the criterion in Proposition \ref{prop:dp_conic_bundle}. To apply Theorem \ref{thm:conic bundle small}, it therefore
suffices to show that $X(K) \neq \emptyset$. If $d$ is even, then this holds by hypothesis. For $d=5$ and $d=1$ there
is always a rational point: for $d=5$ this is a classical theorem of Enriques \cite[Cor.~3.1.5]{Sko01},
and for $d=1$ a rational point is given by the unique base-point of the linear system $|-K_X|$. For $d=3$,
a cubic surface with a conic bundle has a line, hence a rational point. \qed

\subsection{Proof of Theorem \ref{thm:degree}}
We now use Theorem \ref{thm:conic bundle small} to prove Theorem \ref{thm:degree}. To do so, it suffices
to show the following.

\begin{lemma} \label{lem:n_d}
	Let $d \leq 5$, let $n_d$ be as in Table \ref{tab:n_d} and
	let $X$ be a del Pezzo surface of degree $d$ over $k$. Then
	there exists a field extension $L/k$ of degree at most $n_d$ 
	such that $X(L) \neq \emptyset$ and such that $X_L$ admits a conic bundle of complexity at most $3$. 
\end{lemma}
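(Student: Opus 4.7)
The plan is to pin down, for each $d\in\{1,\ldots,5\}$, a specific conjugacy class of subgroups $J_d$ of $W(\E_{9-d})$ of index exactly $n_d$, chosen so that any subgroup $H$ of (a conjugate of) $J_d$, viewed as a Galois action on the lines of a del Pezzo surface of degree $d$, corresponds to a surface that both admits a conic bundle of complexity at most $3$ (via the criterion of Proposition \ref{prop:dp_conic_bundle}) and has a rational point. These $J_d$ will be obtained by an exhaustive enumeration in \texttt{Magma} of the conjugacy classes of $W(\E_{9-d})$, extending the computation recorded in Remark \ref{rem:Galois_actions}: one checks each of the finitely many classes against the two conditions and retains one of minimum possible index.

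Granted such $J_d$, the lemma will follow from a standard coset argument. The Galois action of $X$ on its $(-1)$-curves yields a conjugacy class $G\leq W(\E_{9-d})$. Letting $G$ act by left multiplication on the $n_d$-element coset space $W(\E_{9-d})/J_d$, I pick a coset $gJ_d$ in an orbit of minimal cardinality; the stabiliser $H := G\cap gJ_dg^{-1}$ has index at most $n_d$ in $G$ and is contained in the conjugate $gJ_dg^{-1}$ of $J_d$. Taking $L$ to be the fixed field in $\bar k$ of the pre-image of $H$ under the surjection $\Gal(\bar k/k)\twoheadrightarrow G$ then yields an extension of $k$ of degree at most $n_d$ over which the Galois action on the lines of $X_L$ factors through $H\leq gJ_dg^{-1}$. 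By the defining property of $J_d$, the surface $X_L$ has a rational point and carries a conic bundle of complexity at most $3$; the latter is preserved on passing from $gJ_dg^{-1}$ to the smaller group $H$ because any $H$-orbit on the set of singular fibres refines an orbit of $gJ_dg^{-1}$, so non-split fibres either remain non-split of the same residue degree or split further, and in either case the total complexity does not grow.

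The principal obstacle will be the computational task of exhibiting, for each $d$, a $J_d$ of the claimed index $n_d$ satisfying both properties. The rational-point demand is automatic when $d$ is odd: for $d=5$ every such surface has a $k$-point by Enriques's theorem, for $d=1$ the unique base point of $|{-K_X}|$ is rational, and for $d=3$ any conic on a cubic surface spans a plane whose residual intersection with the cubic is a line, yielding a rational point. For the even degrees $d\in\{2,4\}$, by contrast, the rational-point requirement is a genuine additional constraint on $J_d$; one must choose a class stabilising enough structure on the lines of $X_{\bar k}$ to force descent of a point, and this is what accounts for the markedly larger values of $n_d$ in Table \ref{tab:n_d}.
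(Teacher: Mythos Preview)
Your coset argument and the observation that complexity can only drop on passing to a subgroup are both correct, and this is indeed the skeleton of the paper's proof. The gap lies in the central step: you defer the existence of a suitable $J_d$ to a \texttt{Magma} search over conjugacy classes, checking each against ``the two conditions''. But the second condition---that every del Pezzo surface whose Galois action lies in $J_d$ has a rational point---is not a combinatorial property of the subgroup alone; surfaces of degree $2$ or $4$ with the same action on the lines can differ in whether they possess a rational point. You would need a computable \emph{sufficient} criterion (for instance, ``$J_d$ fixes a line''), and you neither name one nor verify that it can be met at index exactly $n_d$.

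The paper avoids any search by exhibiting $J_d$ directly. For $d\leq 4$ it takes $J_d\cong S_{8-d}$, realised as the splitting group of the blow-up of $\PP^2_\QQ$ at one rational point $P$ together with a general closed point of degree $8-d$. This construction shows at once that the stabiliser in $W(\E_{9-d})$ of the resulting configuration of $9-d$ pairwise skew lines (one of them fixed) is exactly $S_{8-d}$, of index $\#W(\E_{9-d})/(8-d)!=n_d$. Over the extension $L$ one then has a Galois-fixed line on $X_L$; blowing down the $9-d$ skew lines yields $\PP^2_L$ (the fixed line supplies a rational point on the blown-down degree-$9$ surface), and the pencil of lines through its image is a conic bundle of complexity $0$. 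For $d=5$ one instead blows up a general closed point of degree $4$, obtaining $J_5\cong S_4$ and a conic bundle of complexity at most $3$, with rational points free by Enriques's theorem. In particular the rational-point mechanism is the \emph{same} for all $d\leq 4$---it is the fixed line---so your claim that the rational-point requirement accounts for the larger values of $n_d$ at even degrees is mistaken; the formula $n_d=\#W(\E_{9-d})/(8-d)!$ holds uniformly for $1\leq d\leq 4$.
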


\begin{proof}
	We prove the result using Galois theory and facts about the configuration of
	lines on del Pezzo surfaces.
	
	Recall the definition of the graph $G_d$ from Remark \ref{rem:Galois_actions}.
	Given a collection of vertices $V$ of $G_d$, we need to understand which subgroup of $W(\E_{9-d})$ leaves
	$V$ invariant. Due to the geometric nature of $G_d$, we can do this by constructing del Pezzo surfaces
	whose splitting fields have Galois groups which 
	leave the relevant configurations of lines invariant. We explain this method in 
	detail in the cases of interest.

	\smallskip 
	$d=5:$  Let $P$ be a closed point of degree $4$ in $\PP^2_\QQ$ in general position. 
	The blow-up of $\PP_\QQ^2$ at $P$ is a del Pezzo surface $X$ of degree $5$ over $\QQ$. This admits a conic bundle
	structure; explicitly, this structure arises from 
	the pencil of conics in $\PP^2$ which contain $P$. If $P$ is sufficiently general, then the splitting field
	of $X$ has Galois group $S_4$ and the associated conic bundle has complexity $3$.

	We conclude the following about $G_5$: Let $V$ be a collection 
	of $4$ pairwise skew vertices in $G_5$. The subgroup of $W(\E_4)$ which leaves $V$ invariant is a subgroup
	of $S_4$; but by the above we see that it also contains $S_4$, hence is $S_4$. By Galois theory we find
	that, given a del Pezzo surface $X$ of degree $5$ over $k$,
	 there is a field extension $L/k$ of degree at most 
	$(\# W(\E_4) / \#S_4) = 5 = n_5$ such that $X_L$ contains a Galois invariant collection of $4$ pairwise 
	skew lines over $\bar{L}$. As $X(k) \neq \emptyset$ \cite[Cor.~3.1.5]{Sko01}, we find
	that $X_L$ is the blow-up of $\PP_L^2$ in a collection of closed points of total degree $4$,
	hence conclude, as above, that $X_L$ has a conic bundle of complexity at most $3$.

	\smallskip 
	$d\leq 4:$  We use a variant of the previous construction. Let $P \in \PP^2_{\QQ}(\QQ)$ and
	let $Q$ be a closed point of $\PP^2_\QQ$ of degree $(8-d)$ such that $P \sqcup Q$ lies in general position.
	The blow-up of $\PP_\QQ^2$ at $P \sqcup Q$ is a del Pezzo surface of degree $d$. It has a conic bundle
	structure of complexity $0$, arising from the pencil of lines through $P$.
	If $Q$ is chosen sufficiently generally, the  splitting field of $X$ has Galois group $S_{8-d}$.
	
	As in the case $d=5$, by Galois theory and the properties of $G_d$
	we conclude that for a del Pezzo surface $X$ of degree $d$ over $k$,
	there exists a field extension $L/k$ of degree at most $(\# W(\E_{9-d}) / \#S_{8-d})$ such that
	$X_L$ is isomorphic to the blow-up of $\PP_L^2$ in a rational point and collection of closed points
	of total degree $(8-d)$. This admits a conic bundle of complexity $0$ and clearly has a rational point.
	The lemma is proved on noting that $n_d = \# W(\E_{9-d}) / (8-d)!$ for $d\leq 4$.
\end{proof}

This completes the proof of Theorem \ref{thm:degree}. \qed

\begin{remark}
	The astute reader will notice that in the proof of Lemma \ref{lem:n_d},
	for $d\leq 4$ we use del Pezzo surfaces with a conic bundle
	of complexity $0$, however Theorem \ref{thm:conic bundle small} applies whenever there is a conic
	bundle of complexity at most $3$. It turns out that the bounds obtained in 
	Theorem \ref{thm:degree} are the best one may obtain using Theorem \ref{thm:conic bundle small}.
	
	Namely, let $X$ be a del Pezzo surface of degree at most $4$ which admits a conic bundle
	of complexity at most $3$. Then enumerating all possible Galois action in \texttt{Magma}
	and using the criterion from Proposition \ref{prop:dp_conic_bundle}, one finds
	that the splitting field of $X$ always has degree at most $(8-d)!$.
	Though naturally Theorem \ref{thm:conic bundle small} applies to a much wider range of Galois actions
	than just those with a conic bundle of complexity $0$, as explained in Remark \ref{rem:Galois_actions}. 
\end{remark}



\subsection{Equations for del Pezzo surfaces with a conic bundle structure}

We conclude this section by explicitly describing the equations of del Pezzo surfaces
with a conic bundle structure, with an eye towards assisting future proofs of Manin's conjecture.
We work over a perfect field $k$, assumed to have $\chr(k) \neq 2$
in \S\ref{sec:dp_equation}.

\subsubsection{Cohomological calculations}

We begin with some cohomological calculations.

\begin{lemma} \label{lem:Riemann-Roch}
	Let $X$ be a del Pezzo surface of degree $d$ over $k$ equipped with a conic bundle $\pi: X \to \PP^1$.
	Let $F$ be a fibre of $\pi$ and let $n \in \ZZ$. 
	\begin{enumerate}
		\item[(1)] If $n\geq 0$ then
			$$h^0(X, -K_X + nF) = 3n + d + 1.$$
		\item[(2)] If the linear system $|-K_X + nF|$ has an element which is
			a smooth irreducible curve of genus $0$ then
			$$h^0(X, -K_X + nF) = 4n + d + 2.$$
	\end{enumerate}
\end{lemma}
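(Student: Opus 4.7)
The plan is to reduce both parts to Riemann--Roch together with appropriate vanishing. With $D = -K_X + nF$ and the intersection numbers $F^2 = 0$, $-K_X \cdot F = 2$, $K_X^2 = d$ coming from Proposition \ref{prop:conic_bundle}, I would first compute
\[
D^2 = d + 4n, \qquad D \cdot (D - K_X) = 2d + 6n,
\]
so that Riemann--Roch on the rational surface $X$ (where $\chi(\OO_X) = 1$) yields $\chi(D) = d + 3n + 1$.

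For part $(1)$ the plan is to show that $h^1(D) = h^2(D) = 0$. Serre duality gives $h^2(D) = h^0(2K_X - nF)$, and since $(2K_X - nF) \cdot (-K_X) = -2d - 2n < 0$ for $n \geq 0$, the class $2K_X - nF$ cannot be effective, forcing $h^2(D) = 0$. For $h^1(D)$ I would check that $D - K_X = -2K_X + nF$ is ample: its self-intersection $4d + 8n$ is positive, and every irreducible curve has strictly positive intersection with the ample class $-2K_X$ and nonnegative intersection with the nef class $F$; Nakai--Moishezon then gives ampleness. Kodaira vanishing thus yields $h^1(D) = 0$ and hence $h^0(D) = \chi(D) = 3n + d + 1$.

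For part $(2)$ the plan is to exploit the distinguished smooth rational curve $C \in |D|$ directly via the short exact sequence
\[
0 \to \OO_X \to \OO_X(C) \to \OO_C(C) \to 0.
\]
Taking cohomology and using $H^1(X, \OO_X) = 0$ for the rational surface $X$ gives
\[
h^0(D) = 1 + h^0(C, \OO_C(C)).
\]
The sheaf $\OO_C(C)$ is a line bundle of degree $C^2 = D^2 = d + 4n$, and after base change to $\bar{k}$ (which preserves $h^0$) we have $C \cong \PP^1$. The adjunction formula applied to $C$ on the del Pezzo surface $X$, combined with ampleness of $-K_X$, forces $-K_X \cdot C \geq 1$ and hence $C^2 = -2 + (-K_X \cdot C) \geq -1$. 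Thus $\OO_C(C) \cong \OO_{\PP^1}(d + 4n)$ with $d + 4n \geq -1$, and a direct computation shows that $1 + h^0(\OO_{\PP^1}(d+4n))$ equals $4n + d + 2$ both in the case $d + 4n = -1$ (where $h^0 = 0$ and the formula gives $1$) and in the case $d + 4n \geq 0$ (where $h^0 = d + 4n + 1$).

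The main obstacle I anticipate is the justification of Kodaira vanishing in part $(1)$ when $\chr(k) > 0$; however, since $X$ is rational, vanishing for ample line bundles twisted by $K_X$ can be checked using the blow-up structure from $\PP^2$ or a Hirzebruch surface and is standard in this setting. The remaining ingredients are routine intersection theory on $X$ and cohomology on $\PP^1$.
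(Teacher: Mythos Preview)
Your argument is correct. Part~(2) is essentially identical to the paper's proof: both use the short exact sequence $0 \to \OO_X \to \OO_X(C) \to \OO_C(C) \to 0$ together with $h^1(X,\OO_X)=0$ and the computation $C^2 = d+4n$. Your extra remark that adjunction forces $C^2 \geq -1$ is a nice justification of a step the paper leaves implicit.

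Part~(1), however, is handled differently. You compute $\chi(-K_X+nF)$ by Riemann--Roch and then kill $h^1$ and $h^2$ separately, the key input being Kodaira vanishing for the ample class $-2K_X+nF$. The paper instead runs induction on $n$ via the restriction sequence
\[
0 \to \OO_X(-K_X+(n-1)F) \to \OO_X(-K_X+nF) \to \OO_F(-K_X+nF) \to 0,
\]
using only that $(-K_X+nF)\cdot F = 2$ so the restriction to $F\cong\PP^1$ has $h^0=3$ and $h^1=0$, together with the base case $h^i(X,-K_X)$ for del Pezzo surfaces. This inductive route is entirely characteristic-free and uses nothing beyond cohomology of line bundles on $\PP^1$; by contrast your approach is cleaner (no induction) but leans on Kodaira vanishing, which you rightly flag as needing care when $\chr k > 0$. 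Since the lemma is stated for an arbitrary perfect field, the paper's argument is the more self-contained choice here, though your claim that vanishing holds for rational surfaces in positive characteristic is indeed known.
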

\begin{proof}
	We prove $(1)$ using induction on $n$. For the case $n=0$, it is well-known that
	\begin{equation} \label{eqn:coh_dP}
		h^0(X, -K_X) = d + 1, \quad h^1(X, -K_X) = h^2(X, -K_X) = 0
	\end{equation}
	(see e.g.~\cite[\S III.3]{Kol96}). For $n>0$, consider
	the following exact sequence of sheaves
	$$0 \to \OO_X(-F) \to \OO_X \to \OO_F \to 0$$
	on $X$. Twisting we obtain
	\begin{equation} \label{eqn:sheaves}
	0 \to \OO_X(-K_X + (n-1)F) \to \OO_X(-K_X + nF) \to \OO_F(-K_X + nF) \to 0.
	\end{equation}
	By Proposition \ref{prop:conic_bundle} we have $(-K_X + nF)\cdot F = 2$, hence 
	\begin{equation} \label{eqn:coh_P1}
	h^0(X,\OO_F(-K_X + nF)) = 3, \quad 
	h^1(X,\OO_F(-K_X + nF)) = 0.
	\end{equation}
	By \eqref{eqn:coh_dP}, for the inductive step we may assume that
	\begin{equation} \label{eqn:h^1=0}
		h^0(X, -K_X + (n-1)F) = 3(n-1) + d + 1, \quad h^1(X,-K_X + (n-1)F) = 0.
	\end{equation}
	 Applying cohomology to \eqref{eqn:sheaves}, and using \eqref{eqn:coh_P1}  and \eqref{eqn:h^1=0}
	 we find that 
	$$
		h^0(X, -K_X + nF) = 3n + d + 1, \quad h^1(X,-K_X + nF) = 0,
	$$
	as required.
	
	For $(2)$, let $C$ be a smooth irreducible curve of genus $0$ in $|-K_X + nF|$.
	Consider the exact sequence
	\begin{equation} \label{eqn:sheaves2}
		0 \to \OO_X \to \OO_X(C) \to \OO_C(C) \to 0
	\end{equation}
	of sheaves on $X$. As $h^1(X, \OO_X) = 0$ \cite[Lem.~III.3.2.1]{Kol96} and $g(C) = 0$, applying cohomology
	to \eqref{eqn:sheaves2} and using Proposition \ref{prop:conic_bundle}, we obtain
	$$h^0(X, -K_X + nF) = 2 + C^2 = 4n + d + 2,$$
	as required.	
\end{proof}

\subsubsection{Equations}
\label{sec:dp_equation}

Assume now that $\chr(k) \neq 2$.
We use the notation
of \S\ref{sec:conic_bundles_geometry} concerning conic bundles. In particular, we denote by $F$
the class of a fibre and $M$ the class of the relative hyperplane bundle,
and write equations for conic bundles in the form
\begin{equation} \label{eqn:conic_bundle2}
	\sum_{0 \leq i,j \leq 2} f_{i,j}(s,t)x_ix_j= 0.
\end{equation}
We follow the convention that $f_{i,j} = f_{j,i}$.

\begin{theorem} \label{thm:dP}
	Let $X$ be a del Pezzo surface of degree $d \leq 5$ over $k$ equipped with a conic bundle $\pi:X \to \PP^1$.
	Then there exists an embedding of $X$ into $\FF(a_0,a_1,a_2)$ as a surface of bidegree $(e,2)$
	which respects $\pi$, with anticanonical class $-K_X$ and,
	when $k$ is a number field, a choice of anticanonical height function $H_{-K_X}$
	given by Table \ref{tab:dP}.
	\begin{table}[ht]
	\centering
	$\begin{array}{|lllll|}
		\hline 
		d & (a_0,a_1,a_2) & (e,2)  & -K_X  &  H_{-K_X}	 \\	\hline
		5 & (0,0,0) & (1,2) & M + F & \prod_v \max\{|s|_v,|t|_v\}^\locdegv \max\{|x_0|_v,|x_1|_v,|x_2|_v\}^\locdegv \\
		4 & (0,1,1) & (0,2) & M & \prod_v\max\{|x_0|_v, |sx_1|_v,|tx_1|_v, |sx_2|_v, |tx_2|\}^\locdegv \\
		3 & (0,0,1) & (1,2) & M & \prod_v\max\{|x_0|_v,|x_1|_v,|sx_2|_v,|tx_2|_v\}^\locdegv \\
		2 & (0,0,0) & (2,2) & M & \prod_v\max\{|x_0|_v,|x_1|_v,|x_2|_v\}^\locdegv \\
		1 & (0,1,1) & (1,2) & M - F & \prod_v\max\{|s^{-1}x_0|_v,|t^{-1}x_0|_v,|x_1|_v,|x_2|_v\}^\locdegv \\
 		\hline
	\end{array}$
	\caption{}
	\label{tab:dP}
	\vspace{-0.6cm}
	\end{table} 
\end{theorem}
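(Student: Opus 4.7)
The plan is to apply Lemma \ref{lem:conic_bundle_classification}, which realises $X$ as a smooth bidegree $(e,2)$ hypersurface in some $\mathbb{F}(a_0,a_1,a_2)$ with first projection equal to $\pi$. Up to the shift of Remark \ref{rem:not_unique}, the triple $(a_0,a_1,a_2)$ coincides with the splitting type of the rank-$3$ bundle $V := \pi_* \omega_\pi^{-1}$ on $\mathbb{P}^1$. Once this splitting is known, $e$ will be fixed by Proposition \ref{prop:conic_bundle}(2) via $2\sum a_i + 3e = 8 - d$, and the formulas for $-K_X$ and $H_{-K_X}$ in the table will follow directly from parts (3) and (5) of the same proposition. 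Thus the only real task is to compute the splitting of $V$.

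\textbf{Splitting of $V$.} I would write $V \cong \bigoplus_{i=0}^2 \mathcal{O}_{\mathbb{P}^1}(b_i)$ with $b_0 \leq b_1 \leq b_2$ by Birkhoff--Grothendieck. The projection formula together with Lemma \ref{lem:Riemann-Roch}(1) gives
\[
h^0(\mathbb{P}^1, V(k)) = h^0(X, -K_X + (k-2)F) = 3k + d - 5 \qquad (k \geq 2),
\]
so a comparison with the Hilbert polynomial of a split rank-$3$ bundle forces $\sum_i b_i = d - 8$. The claim to establish is that the splitting is as balanced as possible: $(-1,-1,-1), (-2,-1,-1), (-2,-2,-1), (-2,-2,-2), (-3,-2,-2)$ for $d = 5, 4, 3, 2, 1$ respectively. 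After the renormalization $a_i = b_i - b_0$ these recover the entries of Table \ref{tab:dP}.

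\textbf{The key vanishing.} Balancedness would follow from $h^0(X, -K_X - mF) = 0$, where $m = 2$ for $d \in \{3,4,5\}$ and $m = 1$ for $d \in \{1,2\}$. Setting $D = -K_X - mF$, one computes $D^2 = d - 4m$ and $D \cdot (-K_X) = d - 2m$. For $d \in \{1,3\}$ the inequality $D \cdot (-K_X) < 0$ directly precludes effectivity. For $d \in \{2,4\}$, $D \cdot (-K_X) = 0$ combined with $-K_X \cdot C \geq 1$ for every irreducible curve $C$ (ampleness of $-K_X$) would show that any effective representative of $D$ is the zero divisor, contradicting $D^2 \neq 0$. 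For $d = 5$, $D \cdot (-K_X) = 1$ forces any effective representative to be a single $(-1)$-curve with $D^2 = -1$, contradicting $D^2 = -3$. Via the projection formula this vanishing translates to $b_2 \leq m - 3$, and combined with $\sum b_i = d - 8$ a short numerical check uniquely pins down the splitting for $d \in \{1,2,4,5\}$.

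\textbf{The main obstacle: $d = 3$.} For $d = 3$ the two candidates $(-2,-2,-1)$ and $(-3,-1,-1)$ both satisfy the above constraints and are distinguished by the value of $h^0(V(1)) = h^0(X, -K_X - F)$, which should equal $1$. Setting $D' = -K_X - F$, one has $D'^2 = -1$, $D' \cdot (-K_X) = 1$, and $(2K_X + F) \cdot (-K_X) = -4 < 0$, the latter giving $h^2(D') = h^0(2K_X + F) = 0$. Riemann--Roch then yields $h^0(D') \geq \chi(D') = 1$, so $D'$ is effective. As before, $-K_X \cdot (\cdot) = 1$ forces any effective representative of $D'$ to be a single $(-1)$-curve $E$, and the short exact sequence $0 \to \mathcal{O}_X \to \mathcal{O}_X(E) \to \mathcal{O}_E(E) \to 0$ with $\mathcal{O}_E(E) \cong \mathcal{O}_{\mathbb{P}^1}(-1)$ gives $h^0(\mathcal{O}_X(E)) = 1$. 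This is the main technical step of the proof, arising precisely because $d = 3$ is the borderline case in which the initial vanishing argument is not quite tight.
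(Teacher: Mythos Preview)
Your argument is correct, and it takes a genuinely different route from the paper. The paper treats each degree separately: for $d=5$ it uses the Petersen graph to find four skew lines and contract to $\PP^2$; for $d=2$ it uses the anticanonical double cover $X\to\PP^2$; for $d\in\{1,3,4\}$ it computes $h^0(-K_X-F)$ via Lemma~\ref{lem:Riemann-Roch}(2) after first \emph{identifying} $-K_X-F$ geometrically (as a conic, a line, or $-K_X$ itself), and then for $d\in\{1,4\}$ is still left with two candidate splittings $(0,0,2)$ and $(0,1,1)$, which it rules out by separate geometric arguments (Ch\^atelet surfaces contain $(-2)$-curves; for $d=1$ one exhibits a curve $C$ with $-K_X\cdot C=-1$).

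Your approach is more uniform: the single vanishing $h^0(-K_X-mF)=0$, proved purely by intersecting against the ample class $-K_X$, already forces $b_2\le m-3$ and hence pins down the splitting immediately for $d\in\{1,2,4,5\}$, with no secondary elimination step. The only residual case is $d=3$, where you recover $h^0(-K_X-F)=1$ by Riemann--Roch plus the observation that an effective divisor with $(-K_X)$-degree $1$ is a single $(-1)$-curve. This is essentially the same fact the paper uses (``$-K_X-F$ is the class of a line''), but you derive it rather than assert it. What you gain is a cleaner, self-contained argument that does not rely on recognising Ch\^atelet surfaces or on the combinatorics of the Petersen graph; what the paper's approach buys is additional geometric information (e.g.\ the explicit anticanonical morphisms for $d=2,5$ and the second conic fibration for $d=4$) which is useful elsewhere in \S\ref{sec:del_Pezzo}.
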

\begin{proof}
	Let $d\leq 5$ and let $X$ be a del Pezzo surface of degree $d$ equipped
	with a conic bundle $\pi: X \to \PP^1$. That $X$ admits an embedding into some $\FF(a_0,a_1,a_2)$ follows from
	Lemma \ref{lem:conic_bundle_classification}. Note that 
	by Proposition \ref{prop:conic_bundle}, the degree $d$ and the tuple $(a_0,a_1,a_2)$ determines
	$e$ and hence, applying Proposition \ref{prop:conic_bundle} again, the anticanonical divisor
	and a choice of anticanonical height.
	It therefore suffices to calculate $(a_0,a_1,a_2)$,
	which we do by treating each degree $d$ in turn. 
	
	\smallskip	
	$d=5$: 	Let $\mathfrak{L}$ denote the sum of the lines over $\bar{k}$ which lie in the fibres of $\pi$.
	This consists of $3$ pairs of intersecting lines,
	with each pair being mutually skew. Recall that a del Pezzo surface of degree $5$ has 
	$10$ lines over $\bar{k}$, with intersections determined by the Petersen graph.
	An inspection of the Petersen graph reveals that the $4$ lines not in $\mathfrak{L}$ are
	pairwise skew. As this collection is Galois invariant, we may contract them to obtain a morphism
	$X \to \PP^2$ over $k$. Combining this with $\pi$ gives a map $X \to \PP^1 \times \PP^2$,
	which is easily checked to be a closed immersion. Thus we may take $(a_0,a_1,a_2) = (0,0,0)$, as claimed.
	
	\smallskip
	$d=4$: We explicitly calculate  $(a_0,a_1,a_2)$ using 
	the proof strategy of Lemma \ref{lem:conic_bundle_classification}. Namely, let
	$\omega_\pi^{-1} = \omega_X^{-1} \otimes \pi^*(\omega_{\PP^1})$ be the relative anticanonical bundle of $\pi$.
	As $X$ embeds into $\PP(\pi_* \omega_\pi^{-1})$, it suffices to calculate $\pi_* \omega_\pi^{-1}$.
	Since $\PP(\pi_* \omega_\pi^{-1}) \cong \PP(\pi_* \omega_\pi^{-1} \otimes \OO_X(f))$ for any $f \in \ZZ$, 
	we in fact need only calculate $\pi_* \omega_X^{-1} \otimes \OO_X(f)$ for some $f \in \ZZ$.
	
	Recall that for any sheaf $\mathcal{F}$ on $X$ we have $H^0(X, \mathcal{F}) = H^0(\PP^1, \pi_* \mathcal{F})$,
	and that
	$$
	h^0(\PP^1, \OO(a)) =
	\begin{cases}
		a + 1, &  a \geq 0, \\
		0, &  a < 0.		
	\end{cases}
	$$
	Write $\pi_*(\OO_X(-K_X - F)) = \OO(b_0) \oplus \OO(b_1) \oplus \OO(b_2)$. The divisor $-K_X - F$ is the class
	of a smooth conic. Hence Lemma \ref{lem:Riemann-Roch} implies that
	$$h^0(X, -K_X - F) = 2, \quad h^0(X, -K_X) = 5.$$
	The first equality implies that 
	either $b_1 < 0, b_2 < 0 , b_3 = 1$ or $b_1 < 0, b_2 = 0 , b_3 = 0$. Using the second equality,
	we find that 
	$$(b_0,b_1,b_2) = (-1,-1,1) \mbox{ or } (-1,0,0).$$
	Twisting by $1$ and applying Proposition \ref{prop:conic_bundle} we obtain the possibilities
	$$e=0, \quad(a_0,a_1,a_2) = (0,0,2) \mbox{ or } (0,1,1).$$	
	It suffices to rule out  the case $(a_0,a_1,a_2) = (0,0,2)$.
	Let $X \subset \FF(0,0,2)$ be a surface of bidegree $(0,2)$. 
	As $a_0=a_1=e=0$ and $\chr(k) \neq 2$, one may diagonalise the surface
	so that it has the form
	$$x_0^2 - ax_1^2 = f(s,t)x_2^2,$$
	with $a \in k^*$ and  $\deg f = 4$.
	Such a conic bundle is a \emph{Ch\^{a}telet surface}. It is well known that Ch\^{a}telet surfaces
	are never isomorphic to a del Pezzo surface, as e.g.~they contain $(-2)$-curves over $\bar{k}$
	(see e.g.~\cite[p.~300]{BBP12}). 
	Thus we must have $(a_0,a_1,a_2) = (0,1,1)$, as claimed. 
		
	\smallskip
	$d=3:$ We follow a similar method to the previous case. Write
	$\pi_*(\OO_X(-K_X - F)) = \OO(b_0) \oplus \OO(b_1) \oplus \OO(b_2)$. The divisor 
	$-K_X - F$ is the class of a line on $X$, thus Lemma \ref{lem:Riemann-Roch} yields
	$$h^0(X, -K_X - F) = 1, \quad h^0(X, -K_X) = 4.$$
	From this one obtains $(b_0,b_1,b_2) = (2,2,3)$. Twisting by $-2$ gives the result.

	\smallskip	
	$d=2:$ Combining the conic bundle $\pi:X \to \PP^1$ with the anticanonical map $X \to \PP^2$
	gives a morphism $X \to \PP^1 \times \PP^2$, which is easily checked to be a closed immersion.
	
	\smallskip	
	$d = 1:$ We follow the method for $d = 4$. Write
	$\pi_*(\OO_X(-K_X)) = \OO(b_0) \oplus \OO(b_1) \oplus \OO(b_2)$.
	Lemma \ref{lem:Riemann-Roch} gives
	$$h^0(X, -K_X) = 2, \quad h^0(X, -K_X + F) = 5.$$
	We find that $(b_0,b_1,b_2) = (-1,-1,1)$ or $(-1,0,0)$. Twisting by $1$ gives
	$$e=1, \quad(a_0,a_1,a_2) = (0,0,2) \mbox{ or } (0,1,1).$$	
	We shall rule out the case $(0,0,2)$, using a similar method to
	\cite[p.~398]{lilian_annals}. Let $X$ be a smooth surface of bidegree $(1,2)$ in $\FF(0,0,2)$,
	given by the equation \eqref{eqn:conic_bundle2}.
	Consider the curve $C:x_2=0$ on $X$. We will show that $-K_X \cdot C = -1$,
	which implies that $-K_X$ is not ample by Nakai--Moishezon \cite[Thm.~V.1.10]{Har77}.
	The rational function $t^2x_2/x_0$ on $X$ 
	shows that $C + 2F = M$ in $\Pic X$. Using Proposition \ref{prop:conic_bundle}, we find that
	$M^2=5, C\cdot F = 2$ and $C \cdot M = 1$. We deduce that 
	$-K_X \cdot C = (M - F)\cdot C = -1$, as claimed. Thus $(a_0,a_1,a_2) = (0,1,1)$.
\end{proof}

We now consider the converse of Theorem \ref{thm:dP}. In what follows, for a line bundle $L$ on a variety $X$
a collection of $s_0,\ldots, s_r$ of global sections of $L$, we denote by $|s_0,\ldots, s_r| \subset |L|$
the sub-linear system determined by $s_0,\ldots, s_r$.

\begin{theorem} \label{thm:dP_converse}
	Let $1 \leq d \leq 5$, and let $(a_0,a_1,a_2)$ and $e$ be the 
	corresponding values from Table \ref{tab:dP}. Then a general 
	smooth surface of bidegree $(e,2)$	in $\FF(a_0,a_1,a_2)$ is a del Pezzo surface of degree $d$.
	
	More specifically, let $X$ be a smooth surface of bidegree $(e,2)$
	in $\FF(a_0,a_1,a_2)$ with equation \eqref{eqn:conic_bundle2}.
	 Then we have the following.
	\begin{enumerate}
		\item[] $d=5:$ Every such $X$ is a del Pezzo surface of degree $5$.
		\item[] $d=4:$ $X$ is a quartic del Pezzo surface
		if and only if it is isomorphic to a smooth surface of bidegree $(0,2)$ of the form
		\begin{equation} \label{eqn:dP4}
			x_0^2 + f_{1,1}(s,t) x_1^2 + 2f_{1,2}(s,t) x_1x_2 + f_{2,2}(s,t) x_2^2 = 0 \quad \subset \FF(0,1,1).
		\end{equation}
		\item[] $d=3:$ Write
		\begin{equation} \label{eqn:is_dP3_eqn}
		f_{0,0}(s,t) = a_1s + a_2t, \quad f_{0,1}(s,t)= b_1s + b_2t, \quad f_{1,1}(s,t)= c_1s + c_2t.
		\end{equation}
		Then $X$ is isomorphic to a cubic surface if and only if the scheme
		\begin{equation} \label{eqn:is_dP3}
			a_1x_0^2 + 2b_1 x_0x_1 + c_1x_1^2 =0, \quad  a_2x_0^2 + 2b_2 x_0x_1 + c_2x_1^2 = 0 \quad  \subset \PP^1
		\end{equation}
		is empty.
		\item[] $d=2:$ Write $X$ in the form
		\begin{equation} \label{eqn:dP2}
			X: \, a(x_0,x_1,x_2) s^2 + b(x_0,x_1,x_2) st + c(x_0,x_1,x_2) t^2 = 0 \quad \subset \PP^1 \times \PP^2.
		\end{equation}
		Then $X$ is a  del Pezzo surface of degree $2$
		if and only if the scheme
		\begin{equation} \label{eqn:is_dP2}
		 a(x_0,x_1,x_2) = b(x_0,x_1,x_2) =c(x_0,x_1,x_2) = 0 \quad \subset \PP^2
		\end{equation}
		is empty.
		\item[] $d=1$: 	$X$ is a del Pezzo surface of degree $1$ if and only if
		every element of the linear system $| x_1, x_2 | \subset |-K_X|$ is irreducible.
	\end{enumerate}	
\end{theorem}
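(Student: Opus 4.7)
Plan: In every case Proposition~\ref{prop:conic_bundle}(4) together with Table~\ref{tab:dP} gives $K_X^2 = d$, so the task reduces to showing that $-K_X$ is ample on the smooth surface $X$. The case $d=5$ is immediate: $-K_X = M+F$ is the restriction of the very ample Segre divisor $\OO(1,1)$ on $\FF(0,0,0) = \PP^1 \times \PP^2$, so every smooth hypersurface of bidegree $(1,2)$ is a del Pezzo surface of degree $5$.

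For $d \in \{4,3,2\}$ I would analyse the morphism defined by $|M| = |-K_X|$ on the ambient bundle $\FF(a_0,a_1,a_2)$ and identify its contracted locus. This locus is, respectively: the directrix $D = \{x_1=x_2=0\} \subset \FF(0,1,1)$; the divisor $\{x_2=0\} \cong \PP^1 \times \PP^1 \subset \FF(0,0,1)$, whose $(s{:}t)$-rulings $C_\alpha := \{(s{:}t;\alpha_0{:}\alpha_1{:}0) : (s{:}t) \in \PP^1\}$ are contracted to points; and the $\PP^1$-fibres of the second projection $\PP^1 \times \PP^2 \to \PP^2$. Each contracted curve has $M$-degree zero, so if any lies in $X$ then $-K_X = M$ fails to be ample. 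Restricting the defining equation of $X$ to these loci produces exactly the conditions in the theorem: the restriction to $D$ is $f_{00}(s,t)x_0^2 = 0$ with $f_{00}$ a constant, so $D \not\subset X$ iff $f_{00} \neq 0$; the restriction to $C_\alpha$ gives $s(a_1 \alpha_0^2 + 2b_1\alpha_0\alpha_1 + c_1 \alpha_1^2) + t(a_2 \alpha_0^2 + 2b_2\alpha_0\alpha_1 + c_2 \alpha_1^2)$, whose identical vanishing in $(s,t)$ is precisely \eqref{eqn:is_dP3}; and the restriction to $\PP^1 \times \{\alpha\}$ gives $a(\alpha)s^2+b(\alpha)st+c(\alpha)t^2$, which vanishes identically iff $\alpha$ lies in \eqref{eqn:is_dP2}. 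When the stated hypothesis holds, $|M|$ restricts to an isomorphism of $X$ onto its image: for $d=4$, the substitution $x_0 \mapsto x_0 + (f_{01}/f_{00})x_1 + (f_{02}/f_{00})x_2$ is a legitimate bundle automorphism (since $f_{00}$ is a nonzero constant and $f_{0i}$ has degree $1$), putting the equation into form~\eqref{eqn:dP4} and realising $X$ as a smooth intersection of two quadrics in $\PP^4$; for $d=3$ and $d=2$ the image has degree $M^2 = d$, which yields a smooth cubic in $\PP^3$ and a smooth double cover of $\PP^2$ branched along a smooth quartic, respectively---each a del Pezzo of the prescribed degree.

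For $d=1$, where $-K_X = M-F$, the argument pivots on the pencil $|-K_X|$. A short cohomological calculation on $\FF(0,1,1)$, using the sequence $0 \to \OO_\FF(-X) \to \OO_\FF \to \OO_X \to 0$ twisted by $M-F$ (and vanishing of $H^i(\FF, -M-2F)$ for $i=0,1$ via Serre duality and Leray), identifies $|-K_X|$ with the pencil $|x_1,x_2|$. The forward direction is immediate: if $-K_X$ is ample and $C = C_1 + C_2 \in |-K_X|$ decomposes into effective parts, then $-K_X \cdot C_i \geq 1$, forcing $1 = (-K_X)^2 \geq 2$. For the converse, suppose every member is irreducible and some irreducible $C$ has $-K_X \cdot C \leq 0$. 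Necessarily $f_{00} \not\equiv 0$: otherwise the directrix $D$ sits in $X$, and adjunction gives $D^2 = -3$ with $-K_X \cdot D = -1$, forcing $D$ to be a fixed component of every member of $|-K_X|$ and contradicting irreducibility. With $f_{00}$ a nonzero linear form, the base locus of $|x_1,x_2|$ on $X$ is the single point cut out by $x_1 = x_2 = f_{00}(s,t) = 0$. Now $C \notin |-K_X|$ (else $-K_X \cdot C = (-K_X)^2 = 1$), so for each irreducible member $C_0$ the intersection $C \cdot C_0$ is simultaneously $\geq 0$ (distinct irreducibles) and $= -K_X \cdot C \leq 0$, hence zero. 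Thus $C$ is disjoint from every member of the pencil; blowing up the unique base point gives a morphism $\widetilde{X} \to \PP^1$ whose fibres are the members of $|-K_X|$, and since $C$ (or its strict transform) must lie in some fibre, it coincides with an irreducible member, a contradiction.

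The main obstacle is the $d = 1$ case, which requires identifying $|-K_X|$ via cohomology, pinning down its base locus, and handling the auxiliary case $f_{00} \equiv 0$ separately. Verifying that $|M|$ is an \emph{isomorphism} (rather than merely a birational morphism) onto its image in the cases $d \in \{3,4\}$ is the other subtlety, which I would settle by writing down the inverse map chart-by-chart away from the contracted locus of the ambient bundle map.
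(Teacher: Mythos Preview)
Your proposal is correct and follows essentially the same route as the paper: reduce to ampleness of $-K_X$ via Proposition~\ref{prop:conic_bundle}, then analyse case-by-case the morphism given by $|M|$ (or $|x_1,x_2|$ for $d=1$). Two small differences are worth noting. For $d=4$ you show $-K_X$ is ample by observing that $X$ misses the directrix $D$ (since $f_{0,0}$ is a nonzero constant), so $|M|$ embeds $X$ as a smooth $(2,2)$ complete intersection in $\PP^4$; the paper instead uses the sublinear system $|sx_1,tx_1,sx_2,tx_2|$ to exhibit a finite double cover $X \to \PP^1 \times \PP^1$. For the forward direction in $d=1$ you give a clean numerical argument ($(-K_X)^2=1$ together with ampleness forbids a decomposition $C_1+C_2$), whereas the paper appeals to the weighted projective model to see that singular anticanonical members are nodal or cuspidal cubics. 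Your converse for $d=1$ is the same Nakai--Moishezon argument as the paper's, phrased by contradiction via the blow-up of the base point rather than directly.
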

\begin{proof}
	Let $X$ be a smooth surface of bidegree $(e,2)$ in 
	$\FF(a_0,a_1,a_2)$ with equation of the shape \eqref{eqn:conic_bundle2}.
	By Proposition \ref{prop:conic_bundle}, 
	the anticanonical divisor $-K_X$ still has the same class as given in Table \ref{tab:dP},
	and moreover $K_X^2 = d$. Thus in each case it suffices to determine whether $-K_X$ is ample.

	\smallskip
	$d = 5:$ The divisor class $M+ F \in \Pic(\PP^1 \times \PP^2)$ is ample. Hence $-K_X$, being the pull-back
	of $M+ F$ to $X$, is ample.	

	\smallskip
	$d = 4:$ First assume that $f_{0,0} \neq 0$ in \eqref{eqn:conic_bundle2}. Completing the square shows that
	$X$ is isomorphic to a surface of the shape \eqref{eqn:dP4}. To see that such a surface
	is a quartic del Pezzo surface, we note that the linear system 
	$|sx_1,tx_1,sx_2,tx_2| \subset |-K_X|$ 
	defines a double cover $X \to \PP^1 \times \PP^1$.
	Thus $-K_X$ is the pull-back of an ample divisor by a finite morphism, hence is ample
	\cite[Ex.~III.5.7]{Har77}. 
	
	Assume now that $f_{0,0} = 0$  in \eqref{eqn:conic_bundle2}. 
	Consider the curve $C: x_1=x_2=0 \subset X$. As $-K_X = M$, we see that $x_0=0 \in |-K_X|$.
	Hence  $-K_X \cdot C = 0$, so $-K_X$ is not ample by Nakai-Moishezon \cite[Thm.~V.1.10]{Har77}
	
	\smallskip	
	$d = 3:$ Consider the map
	\begin{equation} \label{eqn:cubic_f}
		f: X \to \PP^3, \quad f:(s,t;x_0,x_1,x_2) \mapsto (x_0,x_1,sx_2,tx_2).
	\end{equation}
	This is determined by a sub-linear system of $-K_X = M$.
	The image of $f$ is the cubic surface obtained by setting	$x_2=1$ in the equation 
	\eqref{eqn:conic_bundle2}. We claim that $X$ is a del Pezzo surface if and only if
	$f$ is a closed immersion. Indeed, if $f$ is a closed immersion then $\OO_X(-K_X) = f^*\OO_{\PP^3}(1)$
	is ample. Conversely, if $X$ is a del Pezzo surface then $f$ is the anticanonical map,
	which is a closed immersion as $-K_X$ is very ample in this case.
	
	It therefore suffices to show that $f$ is a closed immersion if and only if the
	scheme \eqref{eqn:is_dP3} is empty. 
	The smoothness of $X$ implies that $f$ is an embedding on the open subset $x_2 \neq 0$,
	so it suffices to consider the behaviour along the divisor $x_2 = 0$.
	We find that $f$ is an embedding if and only if for all $(x_0,x_1)$ the scheme
	\begin{equation} \label{eqn:fibre_dP3}
		f_{0,0}(s,t)x_0^2 + 2f_{0,1}(s,t)x_0x_1 + f_{1,1}(s,t)x_1^2 = 0 \quad \subset \PP^1
	\end{equation}
	is a single point. As $\deg f_{0,0} = \deg f_{0,1} = \deg f_{1,1} = 1$, this happens
	if and only if the polynomial in \eqref{eqn:fibre_dP3} does not identically vanish.
	This condition may be viewed as a collection of two quadratic equations in the $(x_0,x_1)$, with coefficients
	given by $(a_i,2b_i,c_i)$ from \eqref{eqn:is_dP3_eqn}. This system has no solution
	if and only if the scheme \eqref{eqn:is_dP3} is empty,
	as required.
			
	\smallskip		
	$d = 2:$ Consider the map 
	$$f: X \to \PP^2, \quad f:(s,t;x_0,x_1,x_2) \mapsto (x_0,x_1,x_2).$$
	This is determined by a sub-linear system of $-K_X = M$.
	If the scheme \eqref{eqn:is_dP2} is empty then $f$
	is a double cover. Hence $-K_X$, being the pull-back of an ample divisor by a finite morphism, is ample.
	Conversely, if $X$ is a del Pezzo surface then $f$ is the anticanonical map, in particular $f$ is
	a finite morphism. One easily sees that \eqref{eqn:is_dP2} must be empty in this case, as required.
	
	\smallskip	
	$d=1$: First assume that $X$ is a del Pezzo surface.  Then 
	it is easy to see that every element of $| x_1, x_2 |=| -K_X |$ is irreducible (e.g.~from the explicit
	model in weighted projective space, one sees that any singular element is either
	a nodal or a cuspidal cubic curve, hence irreducible).
	
	Now assume that every element of $| x_1, x_2 |$ is irreducible. 
	From the equation one sees that $f_{0,0} \neq 0$ and that $| x_1, x_2 |$ has a unique base-point given
	by $f_{0,0}(s,t) = 0$ and $(x_0,x_1,x_2) = (1,0,0)$. In particular, let $C \subset X$
	be an irreducible curve which is not contained in $| x_1, x_2 |$. Then for any $D \in | x_1, x_2 |$
	the intersection $D \cap C$ is finite and non-empty, thus $D \cdot C = -K_X \cdot C > 0$. Next let $C$ be an irreducible
	curve contained in $| x_1, x_2 |$. As every element of $| x_1, x_2 |$ is irreducible
	we find that $C \sim -K_X$, so that $-K_X \cdot C = K_X^2 = 1$.
	Hence $-K_X$ is ample by Nakai-Moishezon \cite[Thm.~V.1.10]{Har77}, as required.
\end{proof}

\begin{remark}
	Let $d,e$, and $(a_0,a_1,a_2)$ be as in Table \ref{tab:dP}.
	Then the criteria given in Theorem~\ref{thm:dP_converse} are non-empty, i.e.~for $d\leq 4$
	there exist smooth surfaces of bidegree $(e,2)$ in $\FF(a_0,a_1,a_2)$ which are \emph{not} del Pezzo
	surfaces of degree $d$.	
	For $2 \leq d \leq 4$ our criteria are very explicit and it is easy to construct such
	examples. Explicit examples for $d=1$ can be found in \cite[p.~398]{lilian_annals}.
	Here it is shown that ``diagonal'' surfaces
	$$f_{0,0}(s,t) x_0^2 + f_{1,1}(s,t) x_1^2 + f_{2,2}(s,t) x_2^2 = 0 \quad \subset \FF(0,1,1)$$
	of bidegree $(1,2)$ are never del Pezzo surfaces of degree $1$.
	
	Note that smooth surfaces of bidegree $(e,2)$ in $\FF(a_0,a_1,a_2)$ are still interesting from the
	perspective of Manin's conjecture, even when they are not del Pezzo surfaces. They are
	often so-called ``generalised'' or ``weak'' del Pezzo surfaces. In particular, Theorem \ref{thm:conic bundle small}
	can also be used to give lower bounds for Manin's conjecture for such surfaces as well.
\end{remark}

\begin{remark}
	Theorems \ref{thm:dP} and \ref{thm:dP_converse}
	imply that any quartic del Pezzo surface over a field of characteristic
	not equal to $2$ with a conic bundle structure can be written in the form \eqref{eqn:dP4}.
	As is well-known,
	such a surface admits a complementary conic bundle structure; with the equation \eqref{eqn:dP4}
	this  is given by mapping onto $(x_1,x_2)$.
	The existence of this second structure played a crucial r\^{o}le in the proof of
	Manin's conjecture for a quartic del Pezzo surface with a conic bundle (see \cite{BB11}).	
	
	Note that $s,t,x_1,x_2$ are a set of generating sections for $-K_X$ in this case.
	In particular, a viable choice for the anticanonical height function is given by
	$$H_{-K_X}(s,t;x) = \prod_v\max\{|s|_v,|t|_v\}^\locdegv\max\{|x_1|_v,|x_2|_v\}^\locdegv.$$
	This choice has the advantage of making it easier to work with the two conic bundle structures.
\end{remark}

\subsection*{Acknowledgements}
We thank Tim Browning, Roger Heath-Brown, J\"{o}rg Jahnel, John Ottem, Matthias Sch\"{u}tt and Tony V\'{a}rilly-Alvarado for helpful conversations.

\bibliographystyle{amsalpha}
\bibliography{fibration}

\providecommand{\bysame}{\leavevmode\hbox to3em{\hrulefill}\thinspace}
\providecommand{\MR}{\relax\ifhmode\unskip\space\fi MR }
\providecommand{\MRhref}[2]{%
  \href{http://www.ams.org/mathscinet-getitem?mr=#1}{#2}
}
\providecommand{\href}[2]{#2}
\begin{thebibliography}{TVAV11}

\bibitem[BM90]{BM90}
V.~V. Batyrev and Y.~I. Manin, \emph{Sur le nombre des points rationnels de
  hauteur born\'e des vari\'et\'es alg\'ebriques}, Math. Ann. \textbf{286}
  (1990), no.~1-3, 27--43.

\bibitem[BMS14]{lilian_annals}
T.~D. Browning, L.~Matthiesen, and A.~N. Skorobogatov, \emph{Rational points on
  pencils of conics and quadrics with many degenerate fibers}, Ann. of Math.
  (2) \textbf{180} (2014), no.~1, 381--402.

\bibitem[BS16]{BS16}
T.~D. Browning and E.~Sofos, \emph{Counting rational points on quartic del
  pezzo surfaces with a rational conic}, arXiv:1609.09057 (2016).

\bibitem[BSJ14]{mikey}
T.~D. Browning and M.~Swarbrick~Jones, \emph{Counting rational points on del
  {P}ezzo surfaces with a conic bundle structure}, Acta Arith. \textbf{163}
  (2014), no.~3, 271--298.

\bibitem[BT96]{BT96}
V.~V. Batyrev and Y.~Tschinkel, \emph{Rational points on some {F}ano cubic
  bundles}, C. R. Acad. Sci. Paris S\'er. I Math. \textbf{323} (1996), no.~1,
  41--46.

\bibitem[BT98]{toric}
\bysame, \emph{Manin's conjecture for toric varieties}, J. Algebraic Geom.
  \textbf{7} (1998), no.~1, 15--53.

\bibitem[Cas55]{casselsQF}
J.~W.~S Cassels, \emph{Bounds for the least solutions of homogeneous quadratic
  equations}, Proc. Cambridge Philos. Soc. \textbf{51} (1955), 262--264.

\bibitem[CLT02]{CLT02}
A.~Chambert-Loir and Y.~Tschinkel, \emph{On the distribution of points of
  bounded height on equivariant compactifications of vector groups}, Invent.
  Math. \textbf{148} (2002), no.~2, 421--452.

\bibitem[DL10]{DL10}
U.~Derenthal and D.~Loughran, \emph{Singular del {P}ezzo surfaces that are
  equivariant compactifications}, Zap. Nauchn. Sem. S.-Peterburg. Otdel. Mat.
  Inst. Steklov. (POMI) \textbf{377} (2010), no.~Issledovaniya po Teorii
  Chisel. 10, 26--43, 241.

\bibitem[DL15]{DL15}
\bysame, \emph{Equivariant compactifications of two-dimensional algebraic
  groups}, Proc. Edinb. Math. Soc. (2) \textbf{58} (2015), no.~1, 149--168.

\bibitem[dlB02]{regis5}
R.~de~la Bret{\`e}che, \emph{Nombre de points de hauteur born\'ee sur les
  surfaces de del {P}ezzo de degr\'e 5}, Duke Math. J. \textbf{113} (2002),
  no.~3, 421--464.

\bibitem[dlBB11]{BB11}
R.~de~la Bret{\`e}che and T.~D. Browning, \emph{Manin's conjecture for quartic
  del {P}ezzo surfaces with a conic fibration}, Duke Math. J. \textbf{160}
  (2011), no.~1, 1--69.

\bibitem[dlBBP12]{BBP12}
R.~de~la Bret{\`e}che, T.~D. Browning, and E.~Peyre, \emph{On {M}anin's
  conjecture for a family of {C}h\^atelet surfaces}, Ann. of Math. (2)
  \textbf{175} (2012), no.~1, 297--343.

\bibitem[Dol12]{Dol12}
I.~V. Dolgachev, \emph{Classical algebraic geometry, a modern view}, Cambridge
  University Press, Cambridge, 2012.

\bibitem[FMT89]{fmt}
J.~Franke, Y.~I. Manin, and Y.~Tschinkel, \emph{Rational points of bounded
  height on {F}ano varieties}, Invent. Math. \textbf{95} (1989), no.~2,
  421--435.

\bibitem[FP16]{Marta}
C.~Frei and M.~Pieropan, \emph{O-minimality on twisted universal torsors and
  {M}anin's conjecture over number fields}, Ann. Sci. \'Ec. Norm. Sup\'er. (4)
  \textbf{49} (2016), no.~4, 757--811.

\bibitem[FS16]{ffrei}
C.~Frei and E.~Sofos, \emph{Counting rational points on smooth cubic surfaces},
  Math. Res. Lett. \textbf{23} (2016), 127--143.

\bibitem[FS18]{divsumpub}
\bysame, \emph{Divisor sums for binary forms over number fields}, J. Inst.
  Math. Jussieu (to appear) (2018).

\bibitem[Har77]{Har77}
R.~Hartshorne, \emph{Algebraic geometry}, Springer-Verlag, New York-Heidelberg,
  1977, Graduate Texts in Mathematics, No. 52.

\bibitem[Has09]{Has09}
B.~Hassett, \emph{Rational surfaces over nonclosed fields}, Arithmetic
  geometry, Clay Math. Proc., vol.~8, Amer. Math. Soc., Providence, RI, 2009,
  pp.~155--209.

\bibitem[HB97]{rogerbest}
D.~R. Heath-Brown, \emph{The density of rational points on cubic surfaces},
  Acta Arith. \textbf{79} (1997), no.~1, 17--30.

\bibitem[HS00]{HS00}
M.~Hindry and J.~H. Silverman, \emph{Diophantine geometry, {A}n introduction},
  Graduate Texts in Mathematics, vol. 201, Springer-Verlag, New York, 2000.

\bibitem[JL15]{JL15}
J.~Jahnel and D.~Loughran, \emph{The {H}asse principle for lines on del {P}ezzo
  surfaces}, Int. Math. Res. Not. (2015), no.~23, 12877--12919.

\bibitem[KM17]{KM16}
J.~Koll\'ar and M.~Mella, \emph{Quadratic families of elliptic curves and
  unirationality of degree 1 conic bundles}, Amer. J. Math. \textbf{139}
  (2017), no.~4, 915--936.

\bibitem[Kol96]{Kol96}
J.~Koll{\'a}r, \emph{Rational curves on algebraic varieties}, Ergebnisse der
  Mathematik und ihrer Grenzgebiete. 3. Folge. A Series of Modern Surveys in
  Mathematics, vol.~32, Springer-Verlag, Berlin, 1996.

\bibitem[Lou15]{Lou15}
D.~Loughran, \emph{Rational points of bounded height and the {W}eil
  restriction}, Israel J. Math. \textbf{210} (2015), no.~1, 47--79.

\bibitem[LR14]{LR14}
C.~Le~Rudulier, \emph{Points alg\'{e}briques de hauteur born\'{e}e}, Ph.D
  thesis, Universit\'{e} de Rennes 1 (2014).

\bibitem[Man86]{Man86}
Y.~I. Manin, \emph{Cubic forms, {A}lgebra, geometry, arithmetic}, second ed.,
  North-Holland Mathematical Library, vol.~4, North-Holland Publishing Co.,
  Amsterdam, 1986.

\bibitem[MdC13]{MdC13}
D.~Mendes~da Costa, \emph{Integral points on elliptic curves and the
  {B}ombieri-{P}ila bounds}, arXiv:1301.4116 (2013).

\bibitem[MH73]{MH73}
J.~Milnor and D.~Husemoller, \emph{Symmetric bilinear forms}, Springer-Verlag,
  New York-Heidelberg, 1973, Ergebnisse der Mathematik und ihrer Grenzgebiete,
  Band 73.

\bibitem[MV07]{MR2247898}
D.~Masser and J.~D. Vaaler, \emph{Counting algebraic numbers with large height.
  {II}}, Trans. Amer. Math. Soc. \textbf{359} (2007), no.~1, 427--445
  (electronic).

\bibitem[Pey95]{peyre}
E.~Peyre, \emph{Hauteurs et mesures de {T}amagawa sur les vari\'et\'es de
  {F}ano}, Duke Math. J. \textbf{79} (1995), no.~1, 101--218.

\bibitem[PT01]{MR1681100}
E.~Peyre and Yu. Tschinkel, \emph{Tamagawa numbers of diagonal cubic surfaces,
  numerical evidence}, Math. Comp. \textbf{70} (2001), no.~233, 367--387.

\bibitem[Rag75]{raghavan}
S.~Raghavan, \emph{Bounds for minimal solutions of diophantine equations},
  Nachr. Akad. Wiss. G{\"o}ttingen, Math. Phys. Kl. \textbf{9} (1975),
  109--114.

\bibitem[Rei97]{Rei93}
M.~Reid, \emph{Chapters on algebraic surfaces}, Complex algebraic geometry
  ({P}ark {C}ity, {UT}, 1993), IAS/Park City Math. Ser., vol.~3, Amer. Math.
  Soc., Providence, RI, 1997, pp.~3--159.

\bibitem[Sal98]{Sal96}
P.~Salberger, \emph{Tamagawa measures on universal torsors and points of
  bounded height on {F}ano varieties}, Ast\'erisque (1998), no.~251, 91--258.

\bibitem[Sal13]{salbergerunpub}
\bysame, \emph{Counting rational points on projective varieties}, In
  preparation (2013).

\bibitem[Ser90]{serre}
J.-P. Serre, \emph{Sp\'ecialisation des \'el\'ements de {${\rm Br}_2({\bf
  Q}(T_1,\ldots,T_n))$}}, C. R. Acad. Sci. Paris S\'er. I Math. \textbf{311}
  (1990), no.~7, 397--402.

\bibitem[Ser06]{Ser06}
\bysame, \emph{Lie algebras and {L}ie groups}, Lecture Notes in Mathematics,
  vol. 1500, Springer-Verlag, Berlin, 2006.

\bibitem[Sko96]{skorr}
A.~N. Skorobogatov, \emph{Descent on fibrations over the projective line},
  Amer. J. Math. \textbf{118} (1996), no.~5, 905--923.

\bibitem[Sko01]{Sko01}
\bysame, \emph{Torsors and rational points}, Cambridge Tracts in Mathematics,
  vol. 144, Cambridge University Press, Cambridge, 2001.

\bibitem[Sof14]{sofos}
E.~Sofos, \emph{Uniformly counting rational points on conics}, Acta Arith.
  \textbf{166} (2014), no.~1, 1--14.

\bibitem[SSD98]{slat}
J.~B. Slater and P.~Swinnerton-Dyer, \emph{Counting points on cubic surfaces.
  {I}}, Ast\'erisque (1998), no.~251, 1--12.

\bibitem[{Sta}18]{stacks-project}
The {Stacks Project Authors}, \emph{\textit{Stacks Project}},
  \url{http://stacks.math.columbia.edu}, 2018.

\bibitem[TT12]{TT12}
S.~Tanimoto and Y.~Tschinkel, \emph{Height zeta functions of equivariant
  compactifications of semi-direct products of algebraic groups}, Zeta
  functions in algebra and geometry, Contemp. Math., vol. 566, Amer. Math.
  Soc., Providence, RI, 2012, pp.~119--157.

\bibitem[TVAV11]{TVAV11}
D.~Testa, A.~V{\'a}rilly-Alvarado, and M.~Velasco, \emph{Big rational
  surfaces}, Math. Ann. \textbf{351} (2011), no.~1, 95--107.

\end{thebibliography}
\end{document}